\documentclass[reqno]{amsart}
\usepackage{amsmath}
\usepackage{amsfonts}
\usepackage{amstext}
\usepackage{amsbsy}
\usepackage{amsopn}
\usepackage{amsxtra}
\usepackage{upref}
\usepackage{amsthm}
\usepackage{amsmath}
\usepackage{amssymb}
\usepackage{amscd}
\usepackage{euscript}
\usepackage{graphicx}
\usepackage[cspex,bbgreekl]{mathbbol}
\usepackage{enumerate}
\usepackage[bookmarks=false]{hyperref}
\usepackage{mathrsfs}
\usepackage{enumitem}

\parindent=0cm
\parskip=3mm

\newtheorem{prop}{Proposition}[section]
\newtheorem{rem}{Remark}[section]
\newtheorem{lema}{Lemma}[section]
\newtheorem{defi}{Definition}[section]
\newtheorem{teo}{Theorem}[section]
\newtheorem{maintheorem}{Theorem}
\newtheorem{eje}{Example}[section]
\newtheorem{coro}{Corollary}[section]

\newtheorem*{claim*}{Claim}

\def\eps{\varepsilon}
\def\phi{\varphi}
\def\R{{\mathbb R}}

\def\N{{\mathbb N}}

\def\P{{\mathcal P}}

\def\F{{\mathcal F}}
\def\D{{\mathcal D}}
\def\M{{\mathcal M}}

\def\T{{\mathcal T}}

\def\es{{\emptyset}}
\def\sm{\setminus}

\def\crit{{\mathcal Cr}}

\def\bd{\partial }
\def\le{\leqslant}
\def\ge{\geqslant}
\def\st{such that }

\def\F{\mathcal{F}}
\def\M{\mathcal{M}}

\def\level{\text{lev}}
\def\cyl{{\rm C}}

\title[Thermodynamic formalism for interval maps: inducing schemes]{Thermodynamic formalism for interval maps: inducing schemes}
\date{\today}

\begin{thanks}
{  G.I. was partially  supported by  the Center of Dynamical Systems and Related Fields c\'odigo ACT1103 and by Proyecto Fondecyt 111004.}
\end{thanks}

\subjclass[2000]{37D35, 37D25, 37E05}
\keywords{Equilibrium states, thermodynamic formalism, multimodal maps, multifractal analysis}

\author{Godofredo Iommi}
\address{Facultad de Matem\'aticas,
Pontificia Universidad Cat\'olica de Chile (PUC), Avenida Vicu\~na Mackenna 4860, Santiago, Chile}
\email{\href{mailto:giommi@mat.puc.cl}{giommi@mat.puc.cl}}
\urladdr{\url{http://www.mat.puc.cl/~giommi/}}
\author{Mike Todd} \address{
Mathematical Institute,
University of St Andrews,
North Haugh,
St Andrews,
KY16 9SS,
Scotland}
\email{\href{mailto:mjt20@st-andrews.ac.uk}{mjt20@st-andrews.ac.uk}}
\urladdr{\url{http://www.mcs.st-and.ac.uk/~miket/}}

\begin{document}

\begin{abstract}
This survey article concerns inducing schemes in the context of interval maps. We explain how the study of these induced systems allows for the fine description of, not only, the thermodynamic formalism for certain multimodal maps, but also of its multifractal structure. 
\end{abstract}

\maketitle

\section{Introduction}
\label{sec:intro}

A major breakthrough in theory of dynamical systems was the realisation that complicated behaviour exhibited by certain systems can be studied using probabilistic methods. This approach is based on what is usually called \emph{ergodic theory} and at its core is the study of existence and properties of dynamically relevant measures, the so-called \emph{invariant measures}. It was Poincar\'e \cite{poi} who realised that the simple existence of a finite invariant  measure yields non-trivial information on the orbit structure. Invariant measures exist under very weak assumptions on the phase space and on the system \cite[Corollary 6.9.1]{Waltbook}, hence the power of the approach.

There is however a difficulty, in that a large class of interesting dynamical systems have many invariant measures. Indeed, in simple settings such as the full-shift on two symbols  the set  of invariant probability measures is a Poulsen simplex  \cite{gw, los}, that is, an infinite dimensional, convex and compact set for which the extreme points are dense on the whole set. It is, therefore, an important problem to find criteria to choose \emph{relevant} invariant measures. This is where the \emph{thermodynamic formalism} comes into play. Indeed, given  a dynamical system $T:X \to X$ and a continuous function $\phi : X\to \mathbb{R}$ (the \emph{potential}) the \emph{topological pressure} can be defined in the following way
\begin{equation} \label{def:pres}
 P(\phi) = \sup \left\{ h(\mu) + \int \phi \ d \mu : \mu \in \mathcal{M}_T \textrm{ and } -  \int \phi \ d \mu< \infty \right  \},
 \end{equation}
where $h(\mu)$ denotes the entropy of the measure $\mu$ (see \cite[Chapter 4]{Waltbook}) and $\mathcal{M}_T$ denotes the space of $T-$invariant probability measures. A measure $\nu \in \mathcal{M}_T$ attaining the above supremum is called an \emph{equilibrium state}. Proving existence and uniqueness of equilibrium states is one of the major problems in the theory of thermodynamic formalism. This approach to choosing relevant measures as equilibrium states is a set of ideas and techniques which derive from statistical mechanics and that was brought into dynamics in the early seventies by Ruelle and Sinai among others  \cite{Dobr, Sinai, bo, Kel98, Ru_book,Waltbook}.

Properties and regularity of thermodynamic formalism depend, essentially, on two factors. On the one hand the regularity of the potential considered; and on the other the hyperbolicity/expansiveness  of the underlying dynamical system. If the potential $\phi$ is regular enough (say H\"older continuous) and the system is sufficiently hyperbolic (say uniformly expanding) then there exists a unique equilibrium state for $\phi$ and it has strong ergodic properties. Moreover, the pressure function $t \mapsto P(t\phi)$ is real analytic on $\R$ (see \cite{Ru_book}).

The main focus of this survey article will be to study thermodynamic formalism for  interesting systems that are far from being uniformly hyperbolic and for natural potentials that are not even continuous. That  is, the two main features which govern the thermodynamic formalism behave in a poor way. However, we will show that even in that setting we are able to describe the pressure function in great the detail. We will survey results from  the papers \cite{IomTod10} and \cite{IomTod11}.  The main technique we use is that of \emph{inducing}. The idea is to associate to our dynamical system a generalisation of the  first return map (the \emph{induced system}) that not only is uniformly expanding but also is a full-shift on a countable alphabet. A price one has to pay in order to obtain a Markov structure and expansiveness is that our dynamical system is no longer defined over a compact set. This is serious matter since in that context the thermodynamic formalism, even for regular potentials, can be irregular (see Section \ref{sec:CMS}). Another issue is that it is possible that the inducing procedure `throws away' parts of the system which are important in obtaining thermodynamic quantities.
We will discuss the advantages and disadvantages of this approach, explain the main steps in implementing this, and state the results proved in the above papers.

 The main class of dynamical systems we consider here, denoted by $\mathcal F$, is the collection of $C^2$ multimodal interval maps $f:I \to I$, where $I=[0,1]$, satisfying:

\begin{enumerate}[label=({\alph*}),  itemsep=0.0mm, topsep=0.0mm, leftmargin=7mm]
\item the critical set $\crit = \crit(f)$ consists of finitely many critical points $c$ with critical order $1 < \ell_c < \infty$, i.e., there exists a neighbourhood $U_c$ of $c$ and a $C^2$ diffeomorphism $g_c:U_c \to g_c(U_c)$ with $g_c(c) = 0$
     $f(x) = f(c) \pm |g_c(x)|^{\ell_c}$;
\item $f$ has negative Schwarzian derivative, i.e., $1/\sqrt{|Df|}$ is convex;
\item $f$ is topologically transitive on $I$ (i.e., there exists a dense orbit);
\item $f^n(\crit)  \cap f^m(\crit)=\es$ for $m \neq n$.
\end{enumerate}

Note that the class $\F$ includes transitive Collet-Eckmann maps, that is maps where
$|Df^n(f(c))|$ grows exponentially fast in $n$. In particular, the set of quadratic maps in $\F$ has positive Lebesgue measure in the parameter space of quadratic maps (see \cite{BenCar, Jak}). Our main application will be to maps in this family, although we will also remark on extensions (see Section \ref{ssec:cusps}). It will sometimes be useful to restrict our family further.  For an interval map $f:I\to I$, the \emph{metric attractor} is a set $A\subset I$ such that the corresponding set $B(A):=\{x\in I:\omega(x)\subset A\}$ has positive Lebesgue measure and there is no proper subset of $A$ with this property (here $\omega(x)$ is the set of accumulation points of the orbit of $x$).  On the other hand, $A$ is a \emph{topological attractor} if $B(A)$ is residual.  The map $f$ has a \emph{wild attractor} if the there is a metric attractor which is not a topological attractor\footnote{By part (c) of the definition of the class $\F$, the topological attractor of a map $f\in \F$ is always the whole of $I$.}.  We call the set of interval maps without wild attractors \emph{tame}, and denote those in $\F$ by $\F_T$.  We will also be interested in $f\in \F$ which admit a measure $\mu\in \M_f$ which is absolutely continuous w.r.t.\ Lebesgue, which we call an \emph{acip}.  It can be shown that if $f$ has an acip then $f\in \F_T$.

We will be particularly interested in the \emph{geometric potentials} $x \mapsto -t \log|Df(x)|$.   
The equilibrium states corresponding to the geometric potentials capture important geometric features that allow us to study the fractal geometry of dynamically relevant subsets of the phase space (see Theorem \ref{thm:main lyap}).  These measures are supported on sets for which the expansion properties of the system are simple, in the sense that expansion rates are constant for almost every point.  This is of particular importance because  when there is strong expansion  the system behaves as in the classical uniformly hyperbolic theory. The geometric potentials are also related to the existence of an acip as we will see in  Section~\ref{subsec:conf}.
The presence of critical points, which leads to significant distortion problems with both the map and the geometric potentials, as well as the non-Markov structure make these systems particularly interesting, and particularly suited to study using inducing methods

Considering the above discussion, we define, the various kinds of Lyapunov exponents.  Note that we will often suppress the dependence of these quantities on the map $f$ in the notation.  Given a smooth interval map $f$, the \emph{lower (pointwise) Lyapunov exponent} and  \emph{upper (pointwise) Lyapunov exponent} at $x\in I$ are defined as
$$\underline\lambda(x)=\underline\lambda_f(x):=\liminf_{n\to\infty}\frac{1}{n} \log | Df^n (x)|  \text{ and }  \overline\lambda(x)=\overline\lambda_f(x):=\limsup_{n\to\infty}\frac{1}{n} \log |Df^n(x)|,$$
respectively.  When these values are equal, we denote their common value, the \emph{(pointwise) Lyapunov exponent} at $x$, by $\lambda_f(x)$.  Note that in some cases, we will deal with only piecewise smooth functions, in which case we disregard points where the derivative is not defined.  We define the global infimum and supremum of these quantities by
$$\lambda_{\inf}=\lambda_{\inf}(f):=\inf\{\lambda_f(x):x\in I \text{ and this limit exists}\},$$
and
$$ \lambda_{\sup}= \lambda_{\sup}(f):=\sup\{\lambda_f(x):x\in I \text{ and this limit exists}\}.$$
We also define the \emph{Lyapunov exponent} of $\mu\in \M_f$ to be
$$\lambda(\mu)=\lambda_f(\mu):=\int\log|Df|~d\mu.$$
Note that if $\mu$ is ergodic then $\lambda(x)=\lambda(\mu)$ for $\mu$-a.e. $x\in I$.  We also define
$$\lambda_m=\lambda_m(f):=\inf\{\lambda_f(\mu):\mu\in \M_f\} \text{ and } \lambda_M=\lambda_M(f):=\sup\{\lambda_f(\mu):\mu\in \M_f\}.$$
As in \cite[Lemma 4.1]{IomTod11}, $\lambda_M(f)=\lambda_{\sup}(f)$ for $f\in \F$.  However, although by analogue with the complex setting in \cite[Lemma 6]{GelPrzRam10}, we expect $\lambda_m(f)= \lambda_{\inf}(f)$,  we only know that $\lambda_m(f)\ge \lambda_{\inf}(f)$.  
In this setting, \cite{Prz93} implies that $\lambda_m(f)\ge 0$ for all $f\in \F$.  Define the `expanding' and `good' elements of $\F$ by
$$\F_E:=\{f\in\F:\lambda_m(f)>0\} \text{ and } \F_G:= \{f\in \F: \text{ there exists an acip} \}.$$
Some of the key behaviour here can be determined by the growth along the critical orbits.
In the unimodal case, where there is only one critical point $c$, the main result of \cite{NowSan98} implies that $f\in \F_E$ if and only if $\underline\lambda_f(c)>0$.  In \cite{BRSS}, it was shown that even in the multimodal case $|Df^n (f(c))|\to \infty$ for all $c\in \crit$ implies $f\in \F_G$.  Moreover, all unimodal maps with critical order 2 are tame, see \cite{Lyu94,GraSanSwi04}. However, when there is more than one critical point, the situation is obscured by the possibility of mixed behaviour between different critical points.

\subsection{Conformal measures}  \label{subsec:conf}

We now fix some notation and give an important definition. Denote $\phi_t:=-t\log|Df|$. The pressure function will be denoted by $p(t)=p_f(t):=P(-t\log|Df|)$. Given a potential $\phi:I \to\R$, we say that the measure (not necessarily invariant) $m$ is a \emph{$\phi$-conformal measure} if for any measurable set $A\subset I$ such that $f:A\to f(A)$ is a bijection, we have
$$m(f(A))=\int_A e^{-\phi}~d m.$$
We also write $dm(f(x))=e^{-\phi(x)}dm(x)$.
Before stating our first main Theorem, we observe that if $f\in \F$  then $p(1)=0$  and Lebesgue measure $m_1$ is $-\log|f'|$-conformal,  see \cite{Led81, NowSan98}.  Furthermore, by \cite{Led81}, if $\mu\in \M_f$ has $\lambda(\mu)>0$, then $\mu\ll m_1$ (i.e. $\mu$ is an acip) if and only if $\mu$ is an equilibrium state for $-\log|f'|$.    Theorem~\ref{thm:main eq} below concerns a more general set of measures.

\subsection{Main results}

Let $t^+=t^+(f):= \sup\{t : p_f(t) > -\lambda_m(f) t  \}$. The following result was proved in \cite{IomTod10}.

\begin{maintheorem}
Let $f\in \F$.  Then  $t^+\in (0, \infty]$ and:
\begin{enumerate}[label=({\alph*}),  itemsep=0.0mm, topsep=0.0mm, leftmargin=7mm]
\item for each $t <t^+$, there exists a unique equilibrium state $\mu_t$ for $\phi_t$;
\item for each $t <t^+$, there exists a unique $(\phi_t-p(t))$-conformal measure $m_t$.  Moreover, $\mu_t\ll m_t$;
\item $p(t)$ is strictly decreasing, strictly convex and $C^1$ on $(-\infty, t^+)$;
\end{enumerate}
Furthermore,
\begin{enumerate}[resume*]
\item if $f\in \F_{E}$ then $t^+>1$;
\item if $f\in \F_G\sm\F_{E}$, then $t^+=1$, $D^-p(1)<1$ and there exists an acip;
\item if $f\in \F_T$ then $t^+=1$. If moreover $f\in \F_T\sm \F_G$ then $p'(1)=0$;
\end{enumerate}
\label{thm:main eq}
\end{maintheorem}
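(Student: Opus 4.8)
The plan is to transfer the problem, away from a neighbourhood of $\crit$, to the thermodynamic formalism of a countable full-branch Markov map, and then to push the conclusions back to $f$. First I would fix a \emph{nice} interval $Y$ (no iterate of $\partial Y$ enters the interior of $Y$) and let $F:=f^\tau\colon\bigcup_i Y_i\to Y$ be the induced map it determines, so that each branch $f^{\tau_i}\colon Y_i\to Y$ is a diffeomorphism onto $Y$. Hypothesis (b) together with the Koebe distortion principle gives uniform bounded distortion for these branches, and for a suitable choice of $Y$ --- using transitivity (c) and the control on the critical orbits available in $\F$ --- the scheme ``sees'' all the measures we shall need; this compatibility is itself part of the liftability analysis below. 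The upshot is that $F$ is essentially the full shift on a countable alphabet, uniformly expanding and with bounded distortion, so the thermodynamic formalism for countable Markov shifts (Sarig; Mauldin--Urba\'nski) applies.

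Next I would work with the induced potential $\Phi_t:=\sum_{j=0}^{\tau-1}\phi_t\circ f^j=-t\log|DF|$. The distortion bounds show $\Phi_t$ has summable variations on the shift, so for $s\in\R$ the induced pressure $s\mapsto P_F(\Phi_t-s\tau)$ is convex, non-increasing, and real-analytic where finite. Via Abramov's formula and the correspondence between $f$-invariant measures charging $Y$ and $F$-invariant measures, one shows that for $t<t^+$ the number $p(t)$ is the unique $s$ with $P_F(\Phi_t-s\tau)=0$, and that $\Phi_t-p(t)\tau$ is then \emph{positive recurrent}: the definition $t^+=\sup\{t:p(t)>-\lambda_m(f)\,t\}$ is tailored precisely so that this recurrence holds, yielding a unique eigenmeasure of the dual transfer operator and a unique invariant Gibbs state $\mu_{\Phi_t}$ with $h(\mu_{\Phi_t})<\infty$ and, crucially, $\int\tau\,d\mu_{\Phi_t}<\infty$. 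That $t^+\in(0,\infty]$ is then immediate, since $p(0)=\htop(f)>0=-\lambda_m(f)\cdot0$ and $p$ is continuous.

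I would then deduce (a) and (b) by transferring back. Spreading $\mu_{\Phi_t}$ over the orbit blocks of length $\tau$ and normalising by $\int\tau\,d\mu_{\Phi_t}$ produces $\mu_t\in\M_f$ with $h(\mu_t)+\int\phi_t\,d\mu_t=p(t)$ (Abramov again), so $\mu_t$ is an equilibrium state; finiteness of $\int\tau\,d\mu_{\Phi_t}$ is exactly what stops the inducing from losing it. For uniqueness, any equilibrium state $\nu$ for $\phi_t$ with $t<t^+$ must have $\lambda(\nu)>0$ --- otherwise, using $\lambda_m(f)\ge0$ from \cite{Prz93}, its free energy $h(\nu)-t\lambda(\nu)$ could not exceed $-\lambda_m(f)\,t$, contradicting $p(t)>-\lambda_m(f)\,t$ --- and a measure of positive exponent lifts to an $F$-invariant equilibrium state, which is unique. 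Part (b) runs in parallel: the transfer-operator eigenmeasure for $\Phi_t-p(t)\tau$ extends along the branches to a $(\phi_t-p(t))$-conformal measure $m_t$ for $f$, with $\mu_t\ll m_t$ by the usual comparison of a Gibbs state with its conformal measure, and uniqueness of $m_t$ from ergodicity of $\mu_t$. For (c), implicit differentiation of $P_F(\Phi_t-p(t)\tau)=0$, using analyticity of $P_F$ and $\partial_sP_F<0$, gives $p\in C^1$ (in fact analytic) on $(-\infty,t^+)$ with $p'(t)=-\lambda(\mu_t)<0$; strict convexity reduces to $t\mapsto\lambda(\mu_t)$ being non-constant, i.e.\ $\log|Df|$ not cohomologous to a constant on the induced system.

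Finally, (d)--(f) locate $t^+$ using $p(1)=0$ and the conformality of Lebesgue for $-\log|Df|$ (\cite{Led81,NowSan98}). If $f\in\F_E$ then $\lambda_m(f)>0$, so $p(1)=0>-\lambda_m(f)\cdot1$ and continuity forces $p(t)>-\lambda_m(f)\,t$ near $1$, whence $t^+>1$. If $f\in\F_G\setminus\F_E$ then $\lambda_m(f)=0$; an acip $\mu$ exists and, by \cite{Led81} (with $h(\mu)=\lambda(\mu)>0$), is an equilibrium state at $t=1$, so $p(t)=(1-t)\lambda(\mu)+\cdots\ge(1-t)\lambda(\mu)$ while $p(t)=0$ for $t\ge1$; hence $t^+=1$ and $D^-p(1)\le-\lambda(\mu)<1$. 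For tame $f$ one checks that the lift of Lebesgue to the inducing scheme (equivalently, to the Hofbauer tower) loses no mass, so the $t=1$ analysis still applies and $t^+=1$; and if there is in addition no acip, then by \cite{Led81} the pressure at $t=1$ is attained only in the limit along measures of vanishing exponent, which forces both one-sided derivatives of $p$ at $1$ to vanish, i.e.\ $p'(1)=0$. \textbf{The main obstacle} is the transfer step: controlling the inducing so that it neither creates nor destroys equilibrium states --- proving $\int\tau\,d\mu_{\Phi_t}<\infty$, and conversely that \emph{every} $\phi_t$-equilibrium state with $t<t^+$ lifts, despite the distortion coming from the critical points and the non-Markov structure. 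It is here that the finer properties defining $\F$ --- negative Schwarzian, finite critical orders, and conditions (a)--(d) --- are genuinely used, together with (for part (f)) the absence of a wild attractor; the precise location of $t^+$ in the non-expanding cases, governed by the tail distribution of $\tau$ and the geometry of the postcritical set, is the other delicate point.
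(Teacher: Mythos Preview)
Your outline is essentially the paper's own strategy: induce to a full-branch countable Markov map on a nice interval, apply the Sarig/Mauldin--Urba\'nski formalism to the induced potential $\Psi_t=-t\log|DF|-p(t)\tau$, and push the equilibrium/conformal/regularity conclusions back via Abramov. Your identification of the ``main obstacle'' is exactly right, and your treatment of (d)--(f) via $p(1)=0$ and \cite{Led81,NowSan98,Prz93} matches the paper's.

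The one place your write-up understates the difficulty is the sentence ``the definition $t^+=\sup\{t:p(t)>-\lambda_m(f)\,t\}$ is tailored precisely so that this recurrence holds''. This reads as though positive recurrence of $\Psi_t$ (equivalently $P_F(\Psi_t)=0$ together with $\int\tau\,d\mu_{\Psi_t}<\infty$) drops out of the definition; it does not. The inequality $P_F(\Psi_t)\le 0$ is easy (project a compact approximation and contradict the variational principle), but $P_F(\Psi_t)\ge 0$ and the integrability of $\tau$ are the genuine content. The paper's mechanism --- which you only invoke later for part (f) --- is the Hofbauer extension: for $t<t^+$ one takes a sequence $\mu_n$ with $h(\mu_n)+\int\phi_t\,d\mu_n\to p(t)$; the strict inequality $p(t)>-\lambda_m t$ forces $h(\mu_n)\ge K>0$, so the lifts $\hat\mu_n$ give uniformly positive mass to a fixed compact piece of the tower, and Kac's lemma then bounds $\int\tau\,d\nu_n$ uniformly for the induced versions. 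This compactness-through-the-tower argument both proves $P_F(\Psi_t)\ge 0$ and shows that the limiting Gibbs state has $\int\tau<\infty$; it is also what guarantees that a \emph{single} inducing scheme serves for every $t\in(-\infty,t^+)$, a point your choice-of-$Y$ paragraph leaves implicit. So your plan is correct, but the Hofbauer-tower/Kac step should be promoted from a parenthetical in (f) to the engine driving (a)--(c).
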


\begin{rem}
To the best of our knowledge, the behaviour of the pressure function is not known for $f\in \F\sm\F_T$.  However, the works \cite{AviLyu08} and \cite{BruTod12} suggest that $t^+<1$, $p'(t^+)=0$ and there is no equilibrium state for $-t^+\log|Df|$.
\end{rem}

It is an interesting fact that despite the strong lack of expansiveness of the system (principally due to the presence of critical points), we are able to describe the pressure function $p(t)$ in  great detail. Note that for $t > t^+$ the pressure function $p(t)$ is linear. The point $t^+$ has been called \emph{freezing point} \cite{RivPrz11}.  The language here comes from statistical mechanics; for example, comparable transitions occur at a freezing point in Fischer-Felderhof models, see \cite{Fis72}. We stress that if $t^+< \infty$ then the pressure function exhibits a  \emph{phase transition} at $t=t^+$, that is, the pressure function is not real analytic at $t=t^+$.

Theorem \ref{thm:main eq} is a generalisation of the following results.   \cite{BruKel98} deals with the case of unimodal Collet-Eckmann maps  (recall that this means $\underline\lambda_f(f(c))>0$) for a small range of $t$ near $1$, one advantage of their results being analyticity of pressure in that range.  \cite{PesSen08} considers a subset of Collet-Eckmann maps, but for all $t$ in a neighbourhood of $[0, 1]$.  \cite[Theorem 1]{BruTod09} applies to a class of non-Collet Eckmann multimodal maps with $t$ in
a left-sided neighbourhood of $1$, again proving analyticity of pressure in that range. For rational maps, a result of this kind has been obtained by Przytycki and Rivera-Letelier \cite{RivPrz11}, in which they consider the full range of $t$ and obtain analyticity of pressure in that range.  A more detailed study of the behaviour of the pressure function at the phase transition $t^+$, inspired by \cite{MakSmi03}, was carried out in \cite{CorRiv12}.

As we have seen, the expanding properties of the systems coded by the Lyapunov exponents are of fundamental importance in order to describe the dynamical properties of the system. It is, therefore,  natural to try to characterise  the sets
\begin{equation*}
J(\lambda):= \Big{\{} x \in I :  \lambda(x) = \lambda \Big{\}}.
\end{equation*}
for $\lambda\in [\lambda_{\inf}, \lambda_{\sup}]$.  Defining
$$J':= \Big{\{} x \in I :  \lim_{n \to \infty} \frac{1}{n} \log | Df^n(x)| \text{ does not exist} \Big{\}},$$
we can write
$$I=J'\cup\bigcup_{\lambda\in  [\lambda_{\inf}, \lambda_{\sup}]}J(\lambda),$$
which is the \emph{multifractal decomposition of $I$ by Lyapunov exponents}.  In particular, we are interested in the Hausdorff dimension of these sets:
$$L(\lambda):=\dim_H(J(\lambda)).$$
This function, usually called \emph{Lyapunov spectrum}, can be computed by means of thermodynamic formalism, in particular using the pressure and its derivative.

To state our second main theorem, we define
\begin{equation*}
A_f:=\begin{cases}
[\lambda_{\inf}, -D^-p(t^+)) & \text{ if } \lambda_m(f)>0,\\
\{0\} & \text{ if } \lambda_m(f)=0.
\end{cases}
\end{equation*}
The following Theorem was obtained in \cite{IomTod11}.  A further theorem regarding the multifractal spectrum of local dimension was also proved there, but we omit an exposition of that for brevity.

\begin{maintheorem}
Let $f\in \F_G$ and $\lambda \in \R\sm A$.  Then
$$L(\lambda)=\frac1\lambda\inf_{t\in \R}(p(t)+t\lambda).$$
If $\lambda \in (-D^-p(t^+), \lambda_M(f))$ then
$$L(\lambda)=\frac1\lambda (p(t_\lambda)+t_\lambda \lambda)=\frac{h(\mu_{t_\lambda})}\lambda.$$
If $\lambda_m(f)>0$ and $\lambda \in A$ then
$$L(\lambda)\ge\frac1\lambda\inf_{t\in \R}(p(t)+t\lambda).$$
Moreover, $\dim_H(J')=1$.
\label{thm:main lyap}
\end{maintheorem}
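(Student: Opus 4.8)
The plan is to reduce the computation to an induced system and to recognise the right-hand side as a conditional variational principle. First I would fix an inducing scheme for $f$: an induced map $F=f^\tau$ on $Y=\bigsqcup_i X_i\subset I$ which is a full shift on a countable alphabet, uniformly expanding and with bounded distortion, so that the Abramov relations $h(\mu)=h(\bar\mu)/\int\tau\,d\bar\mu$ and $\int\phi_t\,d\mu=\int\Phi_t\,d\bar\mu/\int\tau\,d\bar\mu$ hold for the induced potential $\Phi_t:=\sum_{j<\tau}\phi_t\circ f^j$ and for measures $\bar\mu$, $\mu$ related by projection. On such a system the thermodynamic formalism of countable Markov shifts applies, and $t\mapsto p(t)$ is the Legendre transform of the Birkhoff spectrum of the ratio $S_n\log|DF|/S_n\tau$. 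Concretely, the variational principle for pressure together with convex duality give, on the relevant range of $\lambda$, the identity $\inf_{t\in\R}(p(t)+t\lambda)=\sup\{h(\mu):\mu\in\M_f,\ \lambda(\mu)=\lambda\}$, so that the asserted formula is the statement $L(\lambda)=\tfrac1\lambda\sup\{h(\mu):\lambda(\mu)=\lambda\}$.

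For the lower bound on the ``analytic'' range $(-D^-p(t^+),\lambda_M)$ I would use the equilibrium states supplied by Theorem~\ref{thm:main eq}. For $t<t^+$ the measure $\mu_t$ satisfies $h(\mu_t)-t\lambda(\mu_t)=p(t)$, and since $p$ is strictly convex and $C^1$ there, $\lambda(\mu_t)=-p'(t)$; strict convexity makes $t\mapsto-p'(t)$ a homeomorphism of $(-\infty,t^+)$ onto $(-D^-p(t^+),\lambda_{\sup})$, and $\lambda_{\sup}=\lambda_M$. As $\lambda(\mu_t)>0$ and $\mu_t\ll m_t$ with $m_t$ the $(\phi_t-p(t))$-conformal measure of part (b), $\mu_t$ is exact dimensional with $\dim_H\mu_t=h(\mu_t)/\lambda(\mu_t)$ (this is where bounded distortion of the induced map is used, in the spirit of Ledrappier and Hofbauer--Raith). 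Ergodicity gives $\mu_t\bigl(J(\lambda(\mu_t))\bigr)=1$, whence, writing $\lambda=-p'(t)$, $L(\lambda)\ge h(\mu_t)/\lambda=\tfrac1\lambda(p(t)+t\lambda)$, and one checks this equals $\tfrac1\lambda\inf_s(p(s)+s\lambda)$ because the infimum is attained at $s=t$. This is exactly the middle assertion, $L(\lambda)=h(\mu_{t_\lambda})/\lambda$.

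For the upper bound, valid for every $\lambda$, I would run a covering argument. If $x\in J(\lambda)$ then $|Df^n(x)|=e^{n\lambda+o(n)}$, so for each large $n$ the set $J(\lambda)$ is covered by level-$n$ dynamical cylinders $C$ of diameter $e^{-n\lambda+o(n)}$; using the conformal measure $m_t$ (equivalently, the cylinder definition of $p(t)$ on the induced Markov system), the number of cylinders meeting $J(\lambda)$ is at most $e^{n(p(t)+t\lambda)+o(n)}$, after restricting to finitely many induced symbols and exhausting $Y$ by such finite subsystems -- a Mauldin--Urba\'nski type approximation, needed because the alphabet is infinite and $p$ may have a linear branch. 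Then $\sum_C(\diam C)^s\to0$ once $s>\tfrac1\lambda(p(t)+t\lambda)$, giving $\dim_H J(\lambda)\le\tfrac1\lambda(p(t)+t\lambda)$ for every $t$ with $p(t)<\infty$, and optimising over $t$ yields $L(\lambda)\le\tfrac1\lambda\inf_t(p(t)+t\lambda)$. Together with the previous paragraph this gives equality on $(-D^-p(t^+),\lambda_M)$; for the remaining $\lambda\in\R\sm A$ one combines this upper bound with the lower bound either from a limiting/equilibrium measure at $\lambda=-D^-p(t^+)$ or, where $\lambda<\lambda_{\inf}$ or $\lambda>\lambda_M$, with the fact that $J(\lambda)=\es$ and both sides equal $-\infty$, so the first displayed identity holds on all of $\R\sm A$.

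Finally, on $A=[\lambda_{\inf},-D^-p(t^+))$ (the case $\lambda_m>0$) the variational lower bound has no invariant measure to realise it, so I would build, inside $J(\lambda)$ directly, a Cantor set carrying a non-invariant ``$w$-measure'' obtained by concatenating blocks distributed like $\mu_{t_k}$ with $\lambda(\mu_{t_k})\downarrow\lambda$, interspersed with controlled excursions near $\Crit$, arranged so that its lower local dimension is at least $\tfrac1\lambda\inf_t(p(t)+t\lambda)$; here the matching upper bound can genuinely fail, because the linear branch $p(t)=-\lambda_m t$ governs the infimum. For $\dim_H(J')=1$: since $f\in\F_G$ carries an acip, hence an invariant measure of Hausdorff dimension $1$, and by transitivity also carries measures of differing Lyapunov exponent, one constructs Cantor subsets of $J'$ that alternately shadow two such measures over blocks of rapidly increasing length, forcing $\tfrac1n\log|Df^n|$ to oscillate; letting the dimension of the acip-shadowing stage tend to $1$ and transferring the estimate through the inducing scheme (in the spirit of Iommi--Jordan's treatment of irregular sets for countable Markov maps) gives $\dim_H(J')\ge1$, hence $=1$. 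The principal obstacle throughout is the non-compactness of the induced system combined with the distortion created by the critical points: the inducing scheme must be chosen so that it both captures enough of each level set $J(\lambda)$ on the analytic range to make the lower bound sharp, and admits finite-subsystem approximations whose pressures converge to $p(t)$; and on the boundary set $A$ the failure of the upper and lower bounds to coincide is a genuine phenomenon that has to be accommodated by hand rather than by the measure-theoretic machinery.
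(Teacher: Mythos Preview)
Your lower bound on the analytic range $(-D^-p(t^+),\lambda_M)$ is exactly the paper's argument: equilibrium state $\mu_{t_\lambda}$, ergodicity giving $\mu_{t_\lambda}(J(\lambda))=1$, exact dimensionality $h(\mu_{t_\lambda})/\lambda$, and then Proposition~\ref{prop:lower}.

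The upper bound, however, is handled quite differently in the paper, and your sketch has a real gap. The paper does \emph{not} run a covering argument with $f$-cylinders; indeed your estimate $\diam C\asymp e^{-n\lambda}$ for level-$n$ $f$-cylinders fails for multimodal maps because the critical points destroy bounded distortion for $f$ itself. Instead the paper works entirely on the induced level sets $J_{F_n}(\lambda)$, uses the Gibbs property of the lifted equilibrium measure $\mu_{F_n,t}$ to show that its Markov pointwise dimension equals $h(\mu_t)/\lambda$ at \emph{every} point of $J_{F_n}(\lambda)$, upgrades Markov to genuine pointwise dimension via Appendix~\ref{sec:Markdim}, and applies Proposition~\ref{prop:upper}. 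The transfer back to $J(\lambda)$ then rests on a fact you do not mention and which is the crux of the whole scheme: Theorem~\ref{thm:schemes}(c), that for $f\in\F_G$ the complement $I\sm\bigcup_n(X^n,F_n)^\infty$ has Hausdorff dimension zero. Without this, neither your covering argument (restricted to finitely many induced symbols) nor the paper's measure-theoretic argument can say anything about points of $J(\lambda)$ never seen by any inducing scheme. Your phrase ``exhausting $Y$ by such finite subsystems'' does not address this, since the issue is points not in $(X,F)^\infty$ at all.

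For $\lambda\in A$ and for $J'$ the paper also takes a simpler route than you propose. Rather than building non-invariant $w$-measures by concatenation, Lemma~\ref{lem:small LE big dim} produces genuine \emph{ergodic invariant} measures with $\lambda(\mu)=\lambda$ and $\dim_H(\mu)\ge t^++p(t^+)/\lambda-\eps$, by passing to finite-alphabet truncations of the inducing scheme (compact hyperbolic subsystems) and using their classical thermodynamic formalism. The same hyperbolic approximations, combined with the Barreira--Schmeling theorem on irregular sets for uniformly hyperbolic systems, give $\dim_H(J')=1$ directly, without a hand-built oscillation construction.
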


While the rigorous study of Lyapunov spectrum began with the work of Weiss \cite{we}, previous studies which built up the theory include  \cite{ColLiePor87, EckPro86,HalJenKad86,ra}. Since then the relation established in Theorem \ref{thm:main lyap} between the pressure and the Lyapunov spectrum has been proved in a wide range of different settings. For example, for maps with parabolic fixed points results have been obtained in \cite{bi,GelRam09,Nak00,PolWe}. For rational maps a good description of the Lyapunov spectrum has been given in \cite{GelPrzRam10,gprr}; we note that here, as well as in \cite{GelRam09}, very different methods were used to those presented here.  For a study of this kind of equilibrium state and of Lyapunov exponents in classes of non-uniformly hyperbolic systems in dimension two see \cite{LepRio09, LepOliRio11, SenTak11, SenTak12}. For an example of Theorems~\ref{thm:main eq} and \ref{thm:main lyap} in the case of a flow, see \cite{PacTod10}, where they were proved for contracting Lorenz-like maps, via the results described here applied to the class of cusp maps given below.

\subsection{Extension to other classes of interval maps}
\label{ssec:cusps}
The theory outlined in this paper applies to various classes of piecewise smooth interval/circle maps.  This includes Manneville-Pomeau maps and Lorenz-like maps, but our main focus is on smooth interval maps with critical points, the class $\F$.  However,  we briefly describe a generalisation of this class, so-called \emph{cusp maps}, studied by Ledrappier \cite{Led81} and later extended by Dobbs \cite{Dob08}.  This class includes the class of contracting Lorenz-like maps, see for example \cite{Rov93}.  The extension of our main theorems to this class is described in \cite{IomTod11}.

\begin{defi}
$f:\cup_jI_j\to I$ is a \emph{cusp map} if there exist constants $C,\alpha>1$ and a set $\{I_j\}_j$ is a finite  collection of disjoint open subintervals of $I$ such that
\begin{enumerate}[  itemsep=0.0mm, topsep=0.0mm, leftmargin=7mm]
\item $f_j:=f|_{I_j}$ is $C^{1+\alpha}$ on each $I_j=:(a_j, b_j)$ and $|Df_j|\in (0, \infty)$.

\item $D^+f(a_j), \ D^-f(b_j)$ exist and are equal to 0 or $\pm\infty$.

\item For all $x,y\in \overline{I_j}$ such that $0<|Df_j(x)|, |Df_j(y)|\le 2$ we have $|Df_j(x)-Df_j(y)|<C|x-y|^\alpha$.

\item For all $x,y\in \overline{I_j}$ such that $|Df_j(x)|, |Df_j(y)|\ge 2$, we have $|Df_j^{-1}(x)-Df_j^{-1}(y)|<C|x-y|^\alpha$.
\end{enumerate}
Matching the notation above, denote the set of points $a_j, b_j$ by $\crit$.
\label{def:cusp}
\end{defi}

\begin{rem}
Notice that if for some $j$, $b_j=a_{j+1}$, i.e. $I_j\cap I_{j+1}$ intersect, then $f$ may not continuously extend to a well defined function at the intersection point $b_j$, since the definition above would then allow $f$ to take either one or two values there.  So in the definition above, the value of $f_j(a_j)$ is taken to be $\lim_{x\searrow a_j}f_j(x)$ and $f_j(b_j)=\lim_{x\nearrow b_j}f_j(x)$, so for each $j$, $f_j$ is well defined on $\overline{I_j}$.
\label{rmk:boundaries}
\end{rem}

\begin{rem}
In contrast to the class of smooth maps $\F$ considered previously in this paper, for cusp maps we can have $\lambda_M=\infty$ and/or $\lambda_m=-\infty$.  The first possibility follows since we allow singularities (points where the one-sided derivative is $\infty$).  The second possibility follows from the presence of critical points (although is avoided for smooth multimodal maps with non-flat critical points by \cite{Prz93}).  Examples of both of these possibilities can be found in \cite[Section 11]{Dob08}.
\label{rmk:lyap infinity}
\end{rem}


\subsection{Outline of the paper}
The layout of the paper is as follows. In Section \ref{sec:CMS} we give a detailed exposition of thermodynamic formalism in the context of the full-shift on a countable alphabet; examples of some of the irregular behaviour that can occur in this setting are given. Recall that the importance of that section relies on the fact that the inducing schemes we consider are conjugated to the full-shift on a countable alphabet. In Section \ref{sec:prelim dim}  a brief summary of classical definitions and results on dimension theory that will be used in the rest of the paper is given. Section \ref{sec:ind} introduces and describes the induced systems. This is our main technical device and it allows for the proof of all the results presented here. Several important results are given in this section, for example it is explained how the entropy of a measure in the induced system relates to the entropy of the measure on the original system (Abramov's formula) and also how the integral of a potential relates to the integral of its induced version (Kac's formula). Moreover, a fundamental result stating that we only need to consider a countable collection of inducing schemes in order to `see' all the ergodic theory of the original system is given. In Section \ref{sec:app ind} we explain how to implement the inducing approach in order to describe the thermodynamic formalism of the original system: we outline the proof of the relevant result, Theorem \ref{thm:psi ind}. In Section \ref{sec:mfa} we explain the proof of Theorem \ref{thm:main lyap} where we describe the Lyapunov spectrum in the context of multimodal maps.  We also include appendices, containing both explanatory comments and new proofs of some results. For instance in Appendix \ref{sec:Hof} we describe and explain the Hofbauer tower technique which is implicit in our results. In Appendix \ref{sec:appendix} we give a new proof of a result that relates conformal measures for the induced system with conformal measures in the original one. We give conditions that ensure that we can project a conformal measure from the induced system into the original one. Finally, in Appendix \ref{sec:Markdim} we correct an error in a result we previously used from \cite[Proposition 3]{PolWe}, where the pointwise dimension is related to the Markov dimension.  This result is used in our proof of Theorem \ref{thm:main lyap}.



\section{Thermodynamic formalism for countable Markov shifts}
\label{sec:CMS}

This section is devoted to study thermodynamic formalism for countable Markov shifts. While the theory is well developed for topologically mixing systems (see \cite{Sar99}), we will concentrate on a particular case, namely the full-shift, which corresponds to the symbolic representation of the \emph{inducing schemes} that we will consider. In this setting the theory was mostly developed by Mauldin and Urba\'nski \cite{MUifs}.  We will see that the thermodynamic formalism in this setting is similar to that of finite state Markov shifts \cite{Ru_book}.

The \emph{full-shift} on a countable alphabet $(\Sigma, \sigma)$ is the set
\[ \Sigma := \left\{ (x_n)_{n \in \N_0} : x_n \in \N_0 \text{ for every } n \in \N_0 \right\}, \]
together with the shift map $\sigma: \Sigma  \to \Sigma $ defined by
$\sigma(x_0, x_1, \dots)=(x_1, x_2,\dots)$.
The set $C_{i_0 \dots i_{n-1}}:= \{(x_n)_n \in \Sigma : x_0=i_0 \dots x_{n-1}=i_{n-1}    \}$ is called a \emph{cylinder} of length $n$. The space $\Sigma$ endowed  with the topology generated by the cylinder sets is a non-compact space. This fact is one of the main difficulties that need to be addressed to develop the theory. Indeed, the classical
definition of pressure using $(n, \epsilon)$-separated sets (see \cite[Chapter 9]{Waltbook}) depends upon the metric and, if the space is not compact, can be different even for two equivalent metrics.  The regularity assumptions on the potentials are fundamental when it comes to describe and develop the thermodynamic formalism.
 The \emph{$n$-th variation} of $\phi:\Sigma \to \R$ is defined by
\[ V_{n}:= \sup \{| \phi(x)-  \phi(y)| : x,y \in \Sigma, x_{i}=y_{i}, 0 \leq i \leq n-1 \}. \]
Note that $\phi$ is continuous if and only if $V_{n} \to 0$. We say that $ \phi$ is of \emph{summable variations} if $\sum_{n=2}^{\infty} V_n(\phi)< \infty$. We say that it is  \emph{weakly H\"older} (with parameter $\theta$) if there exists $\theta \in (0,1)$ such that for $n \geq 2$ we have  $V_{n}( \phi) =O(\theta^{n}).$   Clearly, if $\phi$ is of summable variations then it is continuous and if $\phi$ is weakly H\"older then it is of summable variations. The following definition was given by Mauldin and Urba\'nski \cite{MUifs}.

\begin{defi}
Let $(\Sigma, \sigma)$ be the full-shift on a countable alphabet and let $\phi : \Sigma \to \R$ a potential of summable variations. The \emph{pressure} of $\phi$ is given by
\begin{equation*}
P(\phi):= \lim_{n \rightarrow \infty} \frac{1}{n} \log \sum_{\sigma^{n}x=x} \exp \left( \sum_{i=0}^{n-1}  \phi(\sigma^{i}x)\right).
\end{equation*}
\end{defi}

The limit always exists but it can be infinity.  Indeed $P(0)= \infty$, which by the Variational Principle, which we'll see later, means that the entropy of the full-shift is infinity.

\begin{rem}
Sarig \cite{Sar99}, building up on work by Gurevich \cite{Gur69,Gur70}, extended this definition to arbitrary topologically mixing countable Markov shifts $(\Sigma, \sigma)$.  He defined the \emph{Gurevich pressure} of a potential $\phi : \Sigma \to \R$ of summable variations by
\begin{equation*}
P_G(\phi):= \lim_{n \rightarrow \infty} \frac{1}{n} \log \sum_{\sigma^{n}x=x} \exp \left( \sum_{i=0}^{n-1}  \phi(\sigma^{i}x) \right)\chi_{C_{i_0}}(x),
\end{equation*}
where $\chi_{C_{i_{0}}}(x)$ is the characteristic function of the cylinder $C_{i_0}$. It can be proved that the limit always exists (it can be infinity) and that it is independent of $i_0$. Moreover, if $(\Sigma, \sigma)$ is the full-shift then the Gurevich pressure coincides with the pressure defined by Mauldin and Urba\'nski \cite{SarBIP}, i.e., $P_G(\phi)=P(\phi)$.
\end{rem}

If the potential $\phi:\Sigma \to \R$ depends only on the first coordinate, that is for every $i\in \N$ there exists $\lambda_i>0$ such that $\phi(x_i x_1 \dots)= \log\lambda_i$, then the pressure can be explicitly computed:
\begin{align*}
P(\phi)&=  \lim_{n \rightarrow \infty} \frac{1}{n} \log\sum_{\sigma^{n}x=x} \exp \left( \sum_{i=0}^{n-1}  \phi(\sigma^{i}x) \right) \\
&= \lim_{n \rightarrow \infty} \frac{1}{n} \log \sum_{(j_0 \dots j_{n-1}) \in \N^n} (\lambda_{j_0} \lambda_{j_1} \cdots \lambda_{j_{n-1}}) \\
&=   \lim_{n \rightarrow \infty} \frac{1}{n} \log \left( \sum_{i \in \N_0}  \lambda_i  \right)^n = \log \sum_{i=0}^{\infty} \lambda_i.
\end{align*}
The pressure satisfies the following approximation property (see \cite{MUifs,Sar99}).

\begin{teo}
Let $(\Sigma, \sigma)$ be the full-shift and $\phi : \Sigma \to \R$ a potential of summable variations. If $\mathcal{K}:= \{ K \subset \Sigma : K \textrm{ compact and } \sigma\textrm{-invariant}, K \neq \emptyset \}$ then
\begin{equation*} \label{aprox}
   P(\phi) = \sup \{ P(\phi|K) : K \in \mathcal{K} \},
\end{equation*}
where $P(\phi| K)$ is the topological pressure of $\phi$ restricted to the compact set $K$ (for definition and properties see \cite[Chapter 9]{Waltbook}).
\label{thm:Gur approx}
\end{teo}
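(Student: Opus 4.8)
The plan is to prove the two inequalities separately. The inequality $P(\phi)\ge \sup\{P(\phi|K):K\in\mathcal K\}$ is the easy direction and follows essentially from monotonicity: if $K\subset\Sigma$ is compact and $\sigma$-invariant, then every $\sigma$-periodic orbit contained in $K$ contributes to the defining sum for $P(\phi)$, so $\sum_{\sigma^n x=x,\ x\in K}\exp(S_n\phi(x))\le \sum_{\sigma^n x=x}\exp(S_n\phi(x))$ for each $n$, where $S_n\phi=\sum_{i=0}^{n-1}\phi\circ\sigma^i$. Taking $\tfrac1n\log$ and letting $n\to\infty$ gives $P(\phi|K)\le P(\phi)$, using that on the compact set $K$ the Gurevich-style periodic-point definition agrees with the classical topological pressure (this is the finite-alphabet theory, since a compact $\sigma$-invariant subset of $\Sigma$ is contained in a subshift on finitely many symbols). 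Taking the supremum over $K$ yields the bound.

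For the reverse inequality $P(\phi)\le \sup\{P(\phi|K):K\in\mathcal K\}$, the natural candidates for approximating compact sets are the full-shifts on finite sub-alphabets: for $N\in\N$ let $\Sigma_N:=\{(x_n)_n\in\Sigma: x_n\le N \text{ for all }n\}$, which is compact, $\sigma$-invariant, nonempty, hence in $\mathcal K$. First I would fix $\epsilon>0$. By definition of $P(\phi)$ there is $n$ with $\tfrac1n\log\sum_{\sigma^n x=x}\exp(S_n\phi(x))\ge P(\phi)-\epsilon$ (or arbitrarily large, if $P(\phi)=\infty$). Since the series $\sum_{\sigma^n x=x}\exp(S_n\phi(x))$ has nonnegative terms, we may choose $N$ large enough that the partial sum over periodic points with all coordinates $\le N$ captures all but an $\epsilon$-fraction (in the $\tfrac1n\log$ scale, after possibly passing to a larger $n$) of the total; concretely $\tfrac1n\log\sum_{\sigma^n x=x,\ x\in\Sigma_N}\exp(S_n\phi(x))\ge P(\phi)-2\epsilon$. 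The subtlety is that this is a statement for one fixed $n$, whereas $P(\phi|\Sigma_N)$ is a limit; here one uses the standard submultiplicativity/superadditivity of the partition function $Z_n(\phi|\Sigma_N):=\sum_{\sigma^n x=x,\ x\in\Sigma_N}\exp(S_n\phi(x))$ on the finite-alphabet full-shift, which gives $P(\phi|\Sigma_N)=\lim_m\tfrac1m\log Z_m(\phi|\Sigma_N)=\sup_m\tfrac1m\log Z_m$ up to a bounded-variation correction, so that $\tfrac1n\log Z_n(\phi|\Sigma_N)\le P(\phi|\Sigma_N)+o(1)$. Combining, $P(\phi|\Sigma_N)\ge P(\phi)-3\epsilon$, and letting $\epsilon\to0$ finishes the proof.

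The main obstacle is the second direction, and specifically controlling the tail of the periodic-point series uniformly enough to pass from a single length $n$ to the genuine limit defining $P(\phi|\Sigma_N)$. This requires the summable-variation hypothesis: it guarantees that $S_n\phi$ has distortion bounded independently of $n$ along cylinders, which is exactly what makes $Z_n(\phi|\Sigma_N)$ behave submultiplicatively (up to a multiplicative constant $e^{\sum V_k}$) and hence makes $\tfrac1n\log Z_n$ a genuine approximation of the pressure rather than merely a $\liminf$. One also needs to check the harmless point that each $\Sigma_N$ is nonempty and carries periodic orbits (true, as it contains the fixed point of symbol $0$), and that on $\Sigma_N$ the symbolic pressure coincides with the topological pressure in the sense of \cite[Chapter 9]{Waltbook} — this is the classical finite-alphabet result and can be quoted. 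I would present the argument first for $P(\phi)<\infty$ and then remark that the case $P(\phi)=\infty$ is handled identically by replacing ``$\ge P(\phi)-\epsilon$'' with ``$\ge M$'' for arbitrary $M$.
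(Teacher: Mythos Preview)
The paper does not actually prove this theorem: it is stated with a reference to \cite{MUifs,Sar99} and then used as a tool. So there is no ``paper's proof'' to compare against; your argument is essentially the standard one found in those references, and the overall strategy (approximate from below by the finite-alphabet full shifts $\Sigma_N$, and use bounded distortion from summable variations to pass from a single $n$ to the limit) is correct.

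There is one genuine soft spot in your easy direction. You claim that for an arbitrary compact $\sigma$-invariant $K$ the periodic-point pressure coincides with the Walters topological pressure, justifying this by ``$K$ is contained in a subshift on finitely many symbols''. That containment is true, but it does not give the conclusion: a compact invariant $K$ may have \emph{no} periodic points at all (e.g.\ a minimal non-periodic subshift), in which case your periodic-orbit sum is empty while $P(\phi|K)$ can be positive. The fix is easy and you have already set it up: since $K\subset\Sigma_N$ for some $N$, monotonicity of classical pressure (via the variational principle on compact systems, which is Walters and not circular here) gives $P(\phi|K)\le P(\phi|\Sigma_N)$; and on the finite full shift $\Sigma_N$ the periodic-point and topological pressures do agree, so your counting argument yields $P(\phi|\Sigma_N)\le P(\phi)$.

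For the hard direction your outline is sound, but be explicit that you are using \emph{super}additivity of $n\mapsto \log Z_n(\phi|\Sigma_N)$ up to the constant $\sum_k V_k(\phi)$ (concatenation of periodic words in a full shift gives $Z_{n+m}\ge e^{-\sum V_k}Z_nZ_m$), so that by Fekete $P(\phi|\Sigma_N)=\sup_n\tfrac1n\big(\log Z_n-\sum V_k\big)\ge \tfrac1n\log Z_n-\tfrac1n\sum V_k$. This is precisely the inequality you need; writing ``submultiplicativity/superadditivity'' and ``$o(1)$'' obscures which direction you are using. Also make clear that since the limit defining $P(\phi)$ exists, you may take $n$ large first (so $\tfrac1n\sum V_k<\epsilon$) and only then choose $N$.
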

This notion of pressure also satisfies the Variational Principle (see \cite{MUifs,Sar99}):

\begin{teo}[Variational Principle]
Let $(\Sigma, \sigma)$ be the full-shift and $\phi : \Sigma \to \R$ a potential of summable variations then
\begin{equation*}
P(\phi)= \sup \left\{ h(\mu) + \int \phi~d\mu : \mu \in \M_{\sigma} \text{ and } - \int \phi~d\mu < \infty \right\}
\end{equation*}
where $ \M_{\sigma} $ denotes the set of $\sigma-$invariant probability measures and $h(\mu)$ denotes the entropy of the measure $\mu$ (for a precise definition see  \cite[Chapter 4]{Waltbook}).
\end{teo}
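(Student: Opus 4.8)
\emph{The plan} is to reduce the Variational Principle to two ingredients already available: the approximation property of Theorem~\ref{thm:Gur approx}, and the classical Variational Principle on compact metric spaces (\cite[Chapter 9]{Waltbook}). Write $S$ for the supremum on the right-hand side of the asserted identity, so the goal is $P(\phi)=S$.

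\emph{The inequality $P(\phi)\le S$.} Let $K\in\mathcal{K}$. Since $\phi$ has summable variations it is continuous, hence bounded on the compact set $K$; in particular $-\int\phi\,d\mu<\infty$ for every $\mu\in\M_\sigma(K)$, and such a $\mu$ is also an element of $\M_\sigma(\Sigma)$ with the same entropy, because the inclusion $K\hookrightarrow\Sigma$ is an isomorphism of the measure preserving systems $(K,\mu,\sigma)$ and $(\Sigma,\mu,\sigma)$. The classical Variational Principle applied to the compact system $(K,\sigma|_K)$ and the continuous potential $\phi|_K$ then gives $P(\phi|K)=\sup\{h(\mu)+\int\phi\,d\mu:\mu\in\M_\sigma(K)\}\le S$. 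Taking the supremum over $K\in\mathcal{K}$ and invoking Theorem~\ref{thm:Gur approx} yields $P(\phi)\le S$.

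\emph{The inequality $P(\phi)\ge S$.} Fix $\mu\in\M_\sigma$ with $-\int\phi\,d\mu<\infty$; we must show $h(\mu)+\int\phi\,d\mu\le P(\phi)$. Since $\mu\mapsto h(\mu)$ and $\mu\mapsto\int\phi\,d\mu$ are affine and the integrability hypothesis passes to $\mathbb{P}$-almost every ergodic component, the ergodic decomposition lets us assume $\mu$ is ergodic. Fix $\eps>0$. As $\mu$ is a probability measure, by Birkhoff's ergodic theorem (note $\int\phi^-\,d\mu<\infty$) and the Shannon--McMillan--Breiman theorem, for all sufficiently large $N$ there is a finite set $\mathcal{A}\subset\N_0$ and a collection $\mathcal{W}$ of words $w=(w_0,\dots,w_{N-1})$ with all $w_i\in\mathcal{A}$ such that (i) $|\mathcal{W}|\ge e^{N(h(\mu)-\eps)}$ — interpreted as ``$|\mathcal{W}|\ge e^{NM}$ for every $M$'' when $h(\mu)=\infty$ — and (ii) $\sum_{i=0}^{N-1}\phi(\sigma^i x)\ge N\bigl(\int\phi\,d\mu-\eps\bigr)$ for all $x\in C_w$, $w\in\mathcal{W}$. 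The lower bound (i) follows because the words of $\mathcal{W}$ each have $\mu$-measure at most $e^{-N(h(\mu)-\eps)}$ while their cylinders cover a set of measure close to $1$. Now let $Y\subset\Sigma$ consist of all sequences obtained by freely concatenating words of $\mathcal{W}$; since $\mathcal{W}$ uses only finitely many symbols, $Y':=\bigcup_{j=0}^{N-1}\sigma^j Y$ is a nonempty compact $\sigma$-invariant set, i.e. $Y'\in\mathcal{K}$, and $(Y,\sigma^N)$ is conjugate to the full-shift on the alphabet $\mathcal{W}$. Let $\nu$ be the Bernoulli measure on $Y$ giving each word equal weight, symmetrised to a $\sigma$-invariant measure on $Y'$; then $h_\nu(\sigma)=\tfrac1N\log|\mathcal{W}|$, and using (ii) together with summable variations (which bounds the discrepancy between $\sum_{i=0}^{N-1}\phi(\sigma^i\cdot)$ on a cylinder and the contribution of the corresponding word inside a concatenation, the total error being $O(\sum_{n\ge2}V_n)$) one gets $\int\phi\,d\nu\ge\int\phi\,d\mu-\eps-\tfrac1N O(\sum_{n\ge2}V_n)$. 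Hence, by the case already proved (or directly by the classical Variational Principle on $Y'$),
\[
P(\phi)\ \ge\ P(\phi|Y')\ \ge\ h_\nu(\sigma)+\int\phi\,d\nu\ \ge\ \bigl(h(\mu)-\eps\bigr)+\Bigl(\int\phi\,d\mu-\eps\Bigr)-\tfrac1N\,O\Bigl(\textstyle\sum_{n\ge2}V_n\Bigr).
\]
Letting $N\to\infty$ and then $\eps\to0$ gives $P(\phi)\ge h(\mu)+\int\phi\,d\mu$ (and $P(\phi)=\infty$ in the infinite entropy case), completing the proof.

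\emph{Main obstacle.} The $(\le)$ inequality is essentially a repackaging of Theorem~\ref{thm:Gur approx}. The real content is the $(\ge)$ direction: extracting from an arbitrary ergodic invariant measure a \emph{compact} subsystem on finitely many symbols that simultaneously captures almost all of the entropy and almost all of the $\phi$-average. The two delicate points are the uniform control of the Birkhoff sums of $\phi$ across the junctions of concatenated words (this is exactly where summable variations, rather than mere continuity, is needed) and the bookkeeping when $h(\mu)$ or $\int\phi\,d\mu$ is infinite.
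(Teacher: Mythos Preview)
The paper does not prove this statement; it is quoted from \cite{MUifs,Sar99}. Your architecture---deducing $P(\phi)\le S$ from Theorem~\ref{thm:Gur approx} plus the compact Variational Principle, and obtaining $P(\phi)\ge S$ by using Shannon--McMillan--Breiman and Birkhoff to manufacture a compact subsystem carrying nearly the same free energy---is precisely the standard route taken in those references.

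There is, however, a real gap in step (ii). With the paper's convention, summable variations means only $\sum_{n\ge2}V_n(\phi)<\infty$; the first variation $V_1(\phi)$ may well be infinite, so $\phi$ can be unbounded on each $1$-cylinder. You claim $S_N\phi(x)\ge N(\int\phi\,d\mu-\eps)$ for \emph{every} $x\in C_w$, and identify the error incurred as $O\bigl(\sum_{n\ge2}V_n\bigr)$. But two points of an $N$-cylinder agree only on their first $N$ coordinates, so the actual bound is $|S_N\phi(x)-S_N\phi(y)|\le\sum_{n=1}^{N}V_n$; the term $V_1$ (arising from $\phi\circ\sigma^{N-1}$) may be infinite, and then (ii) cannot hold uniformly on $C_w$ and your estimate for $\int\phi\,d\nu$ is unjustified. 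The repair in Sarig's argument is to fix a symbol $a$ with $\mu(C_a)>0$ and restrict to words $w$ with $w_0=a$ whose Birkhoff-typical witness $x_w$ additionally satisfies $(x_w)_N=a$; ergodicity guarantees many such return times. Every concatenation $y=w^{(1)}w^{(2)}\cdots\in Y$ then has $y_N=a$ as well, so $x_w$ and $y$ agree on coordinates $0,\ldots,N$, yielding $|S_N\phi(x_w)-S_N\phi(y)|\le\sum_{n=2}^{N+1}V_n$, now legitimately free of $V_1$. With this modification your argument goes through.
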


It is worth pointing out the remarkable fact that this theorem relates quantities of a different nature. While the pressure is defined in topological terms, the left hand side of the Variational Principle only depends on the Borel structure.  
Also note that since the system has infinite entropy, it is possible for a measure $\mu \in  \mathcal{M}_{\sigma}$ to have $h(\mu)= \infty$ and $ \int \phi~d\mu = - \infty$. In that case, the sum of both quantities is meaningless, which is why the condition $ - \int \phi~d\mu < \infty $ is imposed.

A measure $\mu \in \M_{\sigma}$ attaining the supremum in the Variational Principle, that is
\begin{equation*}
P(\phi)= h(\mu) + \int \phi~d\mu,
\end{equation*}
 is called an \emph{equilibrium state} for $\phi$. Buzzi and Sarig \cite{BuSar} proved that a potential of summable variations has at most one equilibrium state. There are examples of locally constant potentials of finite pressure that don't have equilibrium states (see Example \ref{ejeGibbs}).

Given a potential $\phi:\Sigma \to \R$, a measure $\mu$ on $\Sigma$ is called a \emph{Gibbs measure} for the potential $\phi$ if there exist constants $C>0$ and $P\in \R$ such that for every cylinder $C_{i_0 i_1  \dots i_{n-1}}$ and every $x \in C_{i_0 i_1  \dots i_{n-1}}$ we have
\begin{equation*}
\frac{1}{C} \leq \frac{\mu(C_{i_0 i_1  \dots i_{n-1}})}{\exp(-nP +  \sum_{i=0}^{n-1} \phi(\sigma^i x))} \leq C.
\end{equation*}
That is, the measure of a cylinder is comparable with the Birkhoff sum of the potential, normalised by some $P$, the \emph{Gibbs constant}. This a useful property that enables us to estimate the measure of a set. In the settings described here, if $\mu$ is an equilibrium state for $\phi$, then it is a Gibbs measure with Gibbs constant $P=P(\phi)$.  The following result was obtained (in a slightly more general setting) by Sarig in \cite{SarBIP}. The sufficient part of the theorem was proved by Mauldin and Urba\'nski \cite{muGIBBS}.

\begin{teo}
Let $(\Sigma, \sigma)$ be the full-shift and $\phi : \Sigma \to \R$ a potential such that $\sum_{i=1}^{\infty} V_i(\phi) < \infty$ then $\phi$ has an invariant Gibbs measure if and only if $P(\phi) < \infty$.
\label{thm:Gibbs BIP}
\end{teo}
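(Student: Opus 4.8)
The plan is to prove Theorem~\ref{thm:Gibbs BIP}, the characterization of when the full-shift potential $\phi$ with $\sum_{i=1}^\infty V_i(\phi)<\infty$ admits an invariant Gibbs measure.

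First I would dispose of the easy direction: if $\phi$ admits an invariant Gibbs measure $\mu$ with Gibbs constant $P$, then summing the lower Gibbs bound over all $1$-cylinders gives
\begin{equation*}
1 = \sum_{i\in\N_0}\mu(C_i) \ge \frac1C\sum_{i\in\N_0}\exp\Big(-P+\phi(x^{(i)})\Big),
\end{equation*}
for chosen points $x^{(i)}\in C_i$, so $\sum_i e^{\phi(x^{(i)})}\le Ce^P<\infty$. Combining this with the summable-variation hypothesis (which controls $|\phi(x)-\phi(y)|$ on each cylinder uniformly), a direct estimate on the defining sum shows $P(\phi)\le P+\text{const}<\infty$. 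So finiteness of pressure is necessary.

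The substantive direction is: $P(\phi)<\infty$ implies existence of an invariant Gibbs measure. Here I would follow the Ruelle operator (transfer operator) approach, as developed for countable Markov shifts. Define $\L_\phi g(x)=\sum_{\sigma y = x} e^{\phi(y)}g(y)$; the condition $P(\phi)<\infty$ together with summable variations should give that $\L_\phi$ acts on a suitable space of functions (locally H\"older or bounded continuous functions normalized appropriately), and the key step is to produce an eigenfunction $h>0$ and an eigenmeasure $\nu$ (a conformal measure) with $\L_\phi^* \nu = e^{P(\phi)}\nu$ and $\L_\phi h = e^{P(\phi)} h$, with $h$ bounded above and below. The Gibbs measure is then $d\mu = h\, d\nu$, and one checks it is $\sigma$-invariant and satisfies the two-sided cylinder estimate with Gibbs constant $P(\phi)$; the bounded distortion coming from $\sum V_i(\phi)<\infty$ is exactly what yields the uniform constant $C$. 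An alternative, more hands-on route avoids the full spectral theory: construct $\nu$ directly as a weak-$*$ limit of normalized sums $\frac{1}{Z_n}\sum_{\sigma^n x = x}e^{S_n\phi(x)}\delta_x$ where $Z_n=\sum_{\sigma^n x=x}e^{S_n\phi(x)}$ (finiteness of which near $n\to\infty$ follows from $P(\phi)<\infty$ plus a pressure-finiteness-at-finite-level argument), verify conformality using the Markov structure of the full-shift, and then build the invariant density $h$ as a Cesàro limit of $e^{-nP(\phi)}\L_\phi^n \mathbf 1$, checking uniform boundedness via the distortion bound.

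The main obstacle is precisely controlling the transfer operator on a non-compact alphabet: one must show that $e^{-nP(\phi)}\L_\phi^n\mathbf 1$ stays uniformly bounded above \emph{and} bounded below away from zero, since on a countable alphabet there is no automatic compactness forcing this, and a priori the mass could escape to infinity in the alphabet. The finiteness $P(\phi)<\infty$ is what prevents divergence from above; boundedness below uses that the full-shift has the big images and preimages property trivially (every symbol maps onto everything), so a one-step estimate $\L_\phi\mathbf 1 \ge e^{\phi(x^{(i_0)})}>0$ propagates. Once these two-sided bounds on the iterates are in hand, the Gibbs property and invariance of $\mu = h\nu$ follow by the standard bounded-distortion computation, completing the proof.
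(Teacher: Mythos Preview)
The paper does not give its own proof of this theorem; it is quoted from the literature, with the sufficiency attributed to Mauldin and Urba\'nski \cite{muGIBBS} and the full statement (in greater generality) to Sarig \cite{SarBIP}. Your outline is essentially the standard argument from those references: the necessity direction via summing the Gibbs bound over cylinders is correct, and for sufficiency the Ruelle operator route with eigenmeasure and eigenfunction, exploiting that the full-shift trivially has the BIP property, is exactly the mechanism in \cite{SarBIP}. One caution: your alternative construction of $\nu$ as a weak-$*$ limit of periodic-orbit measures is delicate on a non-compact alphabet (tightness is not automatic), so in practice one builds the conformal measure via the dual operator on a fixed cylinder and extends, rather than via weak-$*$ limits on all of $\Sigma$; but your primary route through $\L_\phi^*\nu=e^{P(\phi)}\nu$ and the two-sided bounds on $e^{-nP(\phi)}\L_\phi^n\mathbf 1$ is the correct one.
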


 \begin{eje} \label{ejeGibbs}
 The following example was given by Sarig in \cite{SarBIP}. Let $(\Sigma, \sigma)$ the the full-shift and for $n\in \N$, let  $a_{n}:=1/(2n (\log 2n)^2)$, and set $a_0\in \R$ such that $\sum_{n=0}^{\infty} a_{n}= 1$.  Then define the potential $ \phi : \Sigma \to \R$ by $\phi (x_0 x_1 \dots)= \log a_{x_0}$.  Note that $P(\phi)=0$.  The potential $\phi$ does not have an equilibrium state. Indeed, the corresponding Gibbs measure is the Bernoulli measure $\mu(C_{i_0 \dots i_n})= \prod_{j=0}^n a_{x_j}$. We have that $h(\mu)= \infty$ and $\int \phi \ d \mu = -\infty$. In particular this example shows that, opposite to what happens for finite state Markov shifts, a Gibbs measure might not be an equilibrium state. 
  \end{eje}

In \cite{Sar01} Sarig generalises the Ruelle Perron Frobenius Theorem to countable Markov shifts. For $\phi$  a potential of summable variations  define the Ruelle operator defined formally in some space of functions by:
\begin{equation*}
L_{\phi}g(x):= \sum_{\sigma y=x} \exp(\phi (y)) g(y).
\end{equation*}

\begin{teo}
Let $(\Sigma , \sigma)$ be the full-shift and $\phi:\Sigma \to \R$ a potential such that  $\sum_{i=1}^{\infty} V_i(\phi) < \infty$ and $P(\phi)=\log \lambda$. Then there exists a conservative conformal measure $m$ and a continuous function $h$ such that $L_{\phi}^{*} m= \lambda m$,
$L_{ \phi} h= \lambda h$ and  $\int h \ d m < \infty$. The measure $ \mu= h m$ is an equilibrium state for $\phi$ if $ \int \phi \  d \mu < \infty$.
\label{thm:RPF mini}
\end{teo}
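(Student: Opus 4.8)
Put $\lambda=e^{P(\phi)}$, finite by hypothesis, and write $S_n\phi=\sum_{k=0}^{n-1}\phi\circ\sigma^k$. The plan is to construct separately (i) a conformal eigenmeasure $m$ with $L_\phi^{*}m=\lambda m$, and (ii) a positive continuous eigenfunction $h$ with $L_\phi h=\lambda h$ and $\int h\,dm<\infty$; then, normalising so that $\int h\,dm=1$, to verify (iii) that $\mu:=hm$ lies in $\M_\sigma$, is a Gibbs measure with Gibbs constant $P(\phi)$, and, when $\int\phi\,d\mu$ is finite, satisfies $h(\mu)+\int\phi\,d\mu=P(\phi)$, so that the Variational Principle identifies it as an equilibrium state. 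Conservativity of $m$ comes for free at the end: once $h$ is known to be everywhere positive and bounded below on cylinders, $m$ and the finite invariant measure $\mu$ are equivalent, and $(\Sigma,\sigma,\mu)$ is conservative by Poincar\'e recurrence.

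\textbf{Step 1: the conformal measure.} I would build $m$ by a finite-alphabet approximation. For $N\in\N$ let $\Sigma^{(N)}$ be the compact full-shift on the alphabet $\{0,1,\dots,N\}$ and $\phi_N:=\phi|_{\Sigma^{(N)}}$, still of summable variation. The classical finite-state Ruelle--Perron--Frobenius theorem supplies eigendata $L_{\phi_N}^{*}m_N=\lambda_N m_N$, $L_{\phi_N}h_N=\lambda_N h_N$ with $\int h_N\,dm_N=1$, $\lambda_N=e^{P(\phi_N)}$, and $m_N$ Gibbs for $\phi_N$. By Theorem~\ref{thm:Gur approx}, $P(\phi_N)\uparrow P(\phi)$, hence $\lambda_N\uparrow\lambda$. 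Summability of the variations gives a distortion constant $D$ with $D^{-1}\le e^{S_n\phi(x)-S_n\phi(y)}\le D$ whenever $x,y$ share an $n$-cylinder; this, together with the finite-state Gibbs property, yields a bound $m_N(C_i)\le Ke^{-P(\phi)}e^{\sup_{C_i}\phi}$ uniform in $N$ (this uniformity, and the matching lower bound, is the main technical point; it uses $P(\phi)<\infty$ via $\sum_i e^{\sup_{C_i}\phi}<\infty$). The family $(m_N)$ is then tight on $\Sigma$, any weak-$*$ limit $m$ charges every cylinder, and passing $\int L_{\phi_N}g\,dm_N=\lambda_N\int g\,dm_N$ to the limit on cylinder functions $g$ (using the uniform variation bounds) gives $L_\phi^{*}m=\lambda m$; moreover $m$ inherits the Gibbs bounds $m(C_{i_0\dots i_{n-1}})\asymp e^{-nP(\phi)+S_n\phi}$.

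\textbf{Step 2: the eigenfunction.} Set $\tilde L:=\lambda^{-1}L_\phi$. The distortion bound makes the normalised iterates $\tilde L^{n}\mathbf 1$ an equicontinuous family with oscillation on each cylinder bounded uniformly in $n$. The big-images property of the full-shift together with $P(\phi)<\infty$ also bounds $\tilde L^{n}\mathbf 1$ above and below by positive constants on each cylinder, uniformly in $n$ --- equivalently, $\phi$ admits an invariant Gibbs measure (Theorem~\ref{thm:Gibbs BIP}), whose Radon--Nikodym derivative with respect to $m$ is another valid choice of $h$. One may therefore pass to a locally uniform limit of the Ces\`aro averages $\frac1n\sum_{k=0}^{n-1}\tilde L^{k}\mathbf 1$ (or use contraction of $\tilde L$ in the Hilbert metric on the cone of log-summable-variation functions), obtaining a continuous $h\ge 0$, $h\not\equiv 0$, with $\tilde L h=h$, i.e.\ $L_\phi h=\lambda h$. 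Since every point of $\Sigma$ has a preimage in every $1$-cylinder, $\tilde L h>0$ forces $h>0$ everywhere, and the uniform upper bounds together with the tail estimate of Step~1 give $\int h\,dm<\infty$; rescale so that $\int h\,dm=1$.

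\textbf{Step 3: invariance, equilibrium, and the main difficulty.} For bounded measurable $g$ one has $L_\phi\big((g\circ\sigma)h\big)=g\,L_\phi h=\lambda\,gh$; integrating against $m$ and using $L_\phi^{*}m=\lambda m$ yields $\int g\circ\sigma\,d\mu=\int g\,d\mu$, so $\mu=hm\in\M_\sigma$, and combining the Gibbs bounds on $m$ with the cylinder bounds on $h$ shows $\mu$ is Gibbs with constant $P(\phi)$. The Jacobian of $\sigma$ for $m$ is $\lambda e^{-\phi}$, hence for $\mu$ it is $J_\mu=\lambda e^{-\phi}h/(h\circ\sigma)$; when $\int\phi\,d\mu$ is finite the Gibbs bounds place $\log J_\mu$ in $L^1(\mu)$, and Rokhlin's formula gives
\[
h(\mu)=\int\log J_\mu\,d\mu=\log\lambda-\int\phi\,d\mu+\int(\log h-\log h\circ\sigma)\,d\mu=P(\phi)-\int\phi\,d\mu,
\]
the last integral vanishing by $\sigma$-invariance. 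Thus $h(\mu)+\int\phi\,d\mu=P(\phi)$, so $\mu$ is an equilibrium state by the Variational Principle (and the unique one, by the theorem of Buzzi and Sarig). I expect the main obstacle to be the uniform control of the approximating measures in Step~1 --- ruling out escape of mass, so that the limit is a genuine eigenmeasure for the \emph{correct} eigenvalue $\lambda$ rather than a defective measure or an eigenmeasure for a smaller value; this is exactly where $P(\phi)<\infty$ and $\sum_n V_n(\phi)<\infty$ are indispensable, and the parallel lower bounds making $h$ strictly positive with $\int h\,dm<\infty$ are the other delicate ingredient.
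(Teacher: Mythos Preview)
The paper does not prove this theorem; it is quoted from Sarig's work \cite{Sar01} as background for the survey. Your proposal is a reasonable and essentially correct route in the special case of the full shift, but it differs from Sarig's argument, so a brief comparison is in order.

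Sarig's proof in \cite{Sar01} does not pass through finite-alphabet approximants. He works on an arbitrary topologically mixing countable Markov shift, introduces the partition functions $Z_n(\phi,a)$ and first-return partition functions $Z_n^*(\phi,a)$, and classifies $\phi$ as transient, null recurrent, or positive recurrent according to the convergence behaviour of the associated generating series. The eigenmeasure $m$ and eigenfunction $h$ are then produced by renewal-theoretic limits of $\lambda^{-n}L_\phi^n\mathbf 1_{[a]}$ and their duals; finiteness of $\int h\,dm$ is exactly the positive-recurrence condition. This gives a statement strictly stronger than what you sketch: it covers subshifts without the big-images-and-preimages (BIP) property and, in particular, the null-recurrent case where $hm$ is only $\sigma$-finite. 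Your approximation-and-tightness argument, by contrast, leans on features specific to the full shift (equivalently, BIP): tightness of $(m_N)$ and uniform lower bounds for $h$ both use that every symbol can follow every other, and these steps would fail on a general countable Markov shift. What you gain is concreteness and a direct link to the Mauldin--Urba\'nski construction behind Theorem~\ref{thm:Gibbs BIP}; what Sarig's route buys is the full trichotomy and the general topological setting.

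One point in your Step~3 deserves an explicit remark: the cancellation $\int(\log h-\log h\circ\sigma)\,d\mu=0$ needs $\log h\in L^1(\mu)$. On the full shift with $P(\phi)<\infty$ and summable variation this holds because $h$ is globally bounded and bounded away from zero (a consequence of BIP, see \cite{SarBIP}), but you should say so; for general countable Markov shifts $h$ is merely continuous and positive, and the coboundary argument requires more care.
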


\subsection{Possible pressure behaviours}

Given a potential $\phi: \Sigma \to \R$ we will be particularly interested in the pressure function  $t \mapsto P(t \phi)$. It follows directly from the approximation property of the pressure (Theorem \ref{aprox}) and the fact that pressure for  finite state  Markov shifts is convex that the pressure function is convex. Moreover, if the potential $\phi$ is non-positive then the pressure function is non increasing. Actually, if $\phi$ is non-positive there exists $t_0 \in [0,\infty]$
  such that
\begin{equation*}
P(t \phi)=
\begin{cases}
\infty & \text{ if } t < t_0;\\
\text{finite}   & \text{ if } t > t_0.
\end{cases}
\end{equation*}
The regularity of the pressure function is well understood. The following result was proved by Sarig  and by Cyr and Sarig \cite{Sar01a,CyrSar}.

\begin{teo}
Let $(\Sigma , \sigma)$ be the full-shift and $\phi:\Sigma \to \R$ a weakly H\"older potential of finite pressure then, when finite, the pressure function is real analytic and
\begin{equation*}
\frac{d}{dt}P(t \phi) \Big|_{t=t^+} = \int \phi \ d \mu_{t^+},
\end{equation*}
where $\mu_{t^+}$is the equilibrium state corresponding to $t^+ \phi$.
\label{thm:press der}
\end{teo}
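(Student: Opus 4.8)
The plan is to deduce both the real analyticity and the derivative formula from the spectral theory of the Ruelle operator $L_{t\phi}$, reconstructing the argument of \cite{Sar01a,CyrSar}. Since $t\mapsto P(t\phi)$ is convex, its effective domain $J:=\{t\in\R:P(t\phi)<\infty\}$ is an interval, and it suffices to work near a fixed $t_0$ in the interior of $J$. The first ingredient I would invoke is a strengthened version of Theorem~\ref{thm:RPF mini}: because $\phi$ is weakly H\"older and the full-shift trivially enjoys the ``big images and preimages'' property, $L_{t_0\phi}$ acts as a bounded operator on a suitable Banach space $\mathcal B$ of locally H\"older functions and has the \emph{spectral gap property} --- its spectral radius equals $\lambda(t_0):=e^{P(t_0\phi)}$, this value is a simple isolated eigenvalue with strictly positive eigenfunction $h_{t_0}\in\mathcal B$, its dual eigenmeasure is the conformal measure $m_{t_0}$ of Theorem~\ref{thm:RPF mini}, and the remainder of the spectrum is contained in a disc of radius strictly less than $\lambda(t_0)$.

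Next I would set up an analytic perturbation in $t$. Expanding $e^{t\phi(y)}=e^{t_0\phi(y)}\sum_{k\ge0}\frac{(t-t_0)^k}{k!}\phi(y)^k$ yields the formal series
\[ L_{t\phi}=\sum_{k\ge0}\frac{(t-t_0)^k}{k!}\,L_{t_0,k},\qquad L_{t_0,k}g(x):=\sum_{\sigma y=x}\phi(y)^k\,e^{t_0\phi(y)}\,g(y). \]
The key technical point is to check that each $L_{t_0,k}$ is bounded on $\mathcal B$ and that their norms grow slowly enough for this series to converge in operator norm when $t$ lies in a complex neighbourhood of $t_0$; this uses the H\"older control on $\phi$ together with the finiteness of $P(t'\phi)$ for $t'$ slightly beyond $t_0$ inside $J$, which supplies exactly the summability needed to absorb the polynomial weights $\phi(y)^k$. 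Granting this, $t\mapsto L_{t\phi}$ is an analytic family of bounded operators near $t_0$, so Kato's analytic perturbation theory gives that the leading eigenvalue $\lambda(t)$, the associated spectral projection $\Pi_t$, the normalised eigenfunction $h_t$, and (by duality) the eigenmeasure $m_t$ of $L_{t\phi}^{*}$ all depend analytically on $t$ there, with $\lambda(t)$ remaining simple and isolated. For real $t$ near $t_0$ the leading eigenvalue equals $e^{P(t\phi)}$ by Theorem~\ref{thm:RPF mini}, so $t\mapsto P(t\phi)=\log\lambda(t)$ is real analytic in a neighbourhood of $t_0$; as $t_0$ was an arbitrary interior point of $J$, it is real analytic on the interior of $J$.

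To obtain the derivative formula I would normalise $\int h_t\,dm_t=1$, so that $\mu_t:=h_t m_t\in\M_\sigma$ is the equilibrium state for $t\phi$. Differentiating $L_{t\phi}h_t=\lambda(t)h_t$ in $t$ and using $\frac{d}{dt}L_{t\phi}g=L_{t\phi}(\phi g)$ gives
\[ L_{t\phi}(\phi h_t)+L_{t\phi}(\partial_t h_t)=\lambda'(t)\,h_t+\lambda(t)\,\partial_t h_t. \]
Integrating against $m_t$, using $\int L_{t\phi}g\,dm_t=\lambda(t)\int g\,dm_t$ and the finiteness of $\int\phi\,d\mu_t$ (which holds since $P$ is finite and convex near $t_0$, so its one-sided derivatives there are finite), the terms $\lambda(t)\int\partial_t h_t\,dm_t$ cancel and we are left with $\lambda(t)\int\phi h_t\,dm_t=\lambda'(t)\int h_t\,dm_t=\lambda'(t)$. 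Hence $\frac{d}{dt}P(t\phi)=\lambda'(t)/\lambda(t)=\int\phi\,d\mu_t$, and specialising to $t=t^+$ yields $\frac{d}{dt}P(t\phi)\big|_{t=t^+}=\int\phi\,d\mu_{t^+}$.

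The main obstacle is the first block: establishing the spectral gap and the analyticity of $t\mapsto L_{t\phi}$ in the non-compact setting. For a general weakly H\"older $\phi$ the potential need not be bounded above, so one cannot simply quote finite-alphabet arguments; the choice of Banach space $\mathcal B$ and of its norm must be made with care, and all the decay needed for the perturbation series must be squeezed out of the finiteness of the pressure (restricting $t$ to the side of $t_0$ on which $P(t\phi)$ stays finite if necessary). Once this is in place, the remainder --- Kato perturbation theory, the duality transporting analyticity to $m_t$, and the differentiation of the eigenvalue relation --- is routine.
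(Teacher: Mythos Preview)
The paper does not give its own proof of this theorem: it is stated as a known result and attributed to Sarig \cite{Sar01a} and Cyr--Sarig \cite{CyrSar}, with no argument supplied. Your proposal is a faithful reconstruction of the approach in those references --- spectral gap for $L_{t_0\phi}$ on a Banach space of locally H\"older functions (using that the full shift has BIP), analyticity of the operator family $t\mapsto L_{t\phi}$, Kato perturbation theory for the simple leading eigenvalue, and differentiation of the eigenvalue equation against the eigenmeasure --- and you have correctly flagged the genuine technical obstacle, namely controlling the operator norms of the $L_{t_0,k}$ in the non-compact setting via finiteness of the pressure nearby.
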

Summarising we see that for regular potentials the thermodynamic formalism in the full-shift behaves in similar way as the finite state Markov shifts. A potential does have a corresponding invariant Gibbs measure, and if this measure has finite entropy then it is the unique equilibrium state. The pressure function, when finite, is real analytic.

\begin{eje}
There exist potentials for which the pressure function is either infinite or strictly negative. The class of potentials with that property was called  \emph{irregular} by  Mauldin and Urba\'nski \cite{MUifs}. Let $N \in \N$ be such that the sequence $a_{n}=1/(2(n+N) (\log 2(n+N))^2)$ satisfies $\sum_{n=1}^{\infty} a_n <1$. Then the potential $\phi (x_0 x_1 \dots)= \log a_{x_0}$ is such that
\begin{equation*}
P(t \phi)=
\begin{cases}
\text{infinite} & \text{ if } t < 1;\\
\text{negative} & \text{ if } t \geq 1.
\end{cases}
\end{equation*}
The following example was given by Mauldin and Urba\'nski \cite{MUifs}. For each $n \in \N$ consider $2^{n^2-1}+1$ cylinders of length one (partition the space in cylinder of length one according to this criteria). Define a locally constant potential $\psi(x)= 2^{-(n^2+n)}$, if $x$ belongs to one of the above cylinders. Then
\begin{equation*}
P(t \psi)=
\begin{cases}
\text{infinite} & \text{ if } t < 1;\\
\text{negative} & \text{ if } t \geq 1.
\end{cases}
\end{equation*}
\end{eje}

\begin{eje}
The following is an example of a potential which is not irregular. Let $a_n= n(n+1)$ and consider $\phi:\Sigma \to \R$ to be the locally constant potential defined by $\phi(x) \big|_{C_n}:= -\log a_n$. Then
$$P(t \phi) =\log \sum_{n=1}^{\infty} \left( \frac{1}{n(n+1)}  \right)^t.$$
Therefore
\begin{equation*}
P(t \psi)=
\begin{cases}
\text{infinite} & \text{ if } t < 1/2;\\
\text{real analytic} & \text{ if } t \geq 1/2.
\end{cases}
\end{equation*}
In this case we have that
$$ \lim_{t \to 1/2} P(t \phi) = \infty.$$
\end{eje}
As we have seen,  for a non-positive potential $\phi$ there exists a critical  value $t_0 \geq 0$ such that for $t <t_0$ the pressure function is infinite and for $t > t_0$ is finite. If $P(t_0 \phi) < \infty$ it is possible for
$$\lim_{t \to t_0^+} P'(t \phi)$$
to be either finite or infinite.

\begin{eje}
The following examples are due to Kesseb\"ohmer, Munday and  Stratmann \cite{kms}. Let $a_n := n^{-2} (\log(n + 5))^{-12}/C$, where
$C :=\sum_{n=1}^{\infty} n^{-2} (\log(n + 5))^{-12}$. Consider $\phi:\Sigma \to \R$ to be the locally constant potential defined by $\phi(x) \big|_{C_n}:= \log a_n$.
In this case $t_0= 1/2$ and $P((1/2) \phi) < 1$. Moreover,
$$\lim_{t \to 1/2^+} P'(t \phi) < \infty.$$
Consider now  $b_n := n^{-2} (\log(n + 5))^{-4}/C$, where
$C :=\sum_{n=1}^{\infty} n^{-2} (\log(n + 5))^{-4}$. Consider $\psi:\Sigma \to \R$ to be the locally constant potential defined by $\psi(x) \big|_{C_n}:= \log a_n$.
In this case $t_0= 1/2$ and $P((1/2) \psi) < 1$ and
$$\lim_{t \to 1/2^+} P'(t \psi) = \infty.$$
\end{eje}

\section{Dimension theory}\label{sec:prelim dim}

This section is devoted to recalling basic definitions and results from dimension theory that will be used in what follows (see \cite{Pesbook} and \cite{PrzUrb_book} for details). A countable collection of sets $\{U_i \}_{i\in \N}$ is called a $\delta$-cover of $F \subset\R$ if $F\subset\bigcup_{i\in\N} U_i$, and $U_i$ has diameter $|U_i|$ at most $\delta$ for every $i\in\N$. Letting $s>0$, we define
\[
\mathcal{H}^s(J) := \lim_{\delta \to 0}\inf \left\{ \sum_{i=1}^{\infty} |U_i|^s : \{U_i \}_i \text{ a } \delta\text{-cover of } J \right\}.
\]
The \emph{Hausdorff dimension} of the set $J$ is defined by
\[
{\dim_H}(J) := \inf \left\{ s>0 : \mathcal{H}^s(J) =0 \right\}.
\]

Hausdorff dimension is invariant under bi-Lipschitz transformations
\begin{prop}
Let $\pi: J \subset \R^n \to \R^n$ be a bi-Lipschitz transformation then
\[\dim_H(J)= \dim_H(\pi(J)).\]
\end{prop}

Given a finite Borel measure $\mu$ in $F$, the \emph{lower pointwise dimension}  and \emph{upper pointwise dimension} of $\mu$ at the point $x$ are defined by
$$\underline d_\mu(x):=\liminf_{r\to 0}\frac{\log\mu(B(x, r))}{\log r} \quad \text{ and } \quad \overline d_\mu(x):=\limsup_{r\to 0}\frac{\log\mu(B(x, r))}{\log r},$$
respectively, where $B(x,r)$ is the ball at $x$ of radius $r$.  Whenever these limits are equal, we denote their common limit by $d_\mu(x)$, the  \emph{pointwise dimension} of $\mu$ at $x$.
This function describes the power law behaviour of
$\mu(B(x,r))$ as $r \rightarrow 0$, that is
\[\mu(B(x,r)) \sim r^{d_{\mu}(x)}.\]
The pointwise dimension quantifies how concentrated a measure is around a point: the larger it is  the less concentrated the measure is around that point.
Note that if $\mu$ is an atomic measure supported at the point $x_{0}$ then $d_{\mu}(x_{0})=0$ and if $x_{1} \neq x_{0}$ then $d_{\mu}(x_{1}) = \infty $.

The following propositions relating the pointwise dimension with the Hausdorff dimension can be found in \cite[Section 7]{Pesbook}.

\begin{prop}\label{prop:upper}
Given a finite Borel measure $\mu$, if $\underline d_{\mu}(x) \le d$ for every $x \in F$, then ${\dim_H}(F)\le d$.
\end{prop}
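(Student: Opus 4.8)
The plan is to use the standard mass-distribution / covering argument: bound the Hausdorff measure $\mathcal H^s(F)$ for $s>d$ by distributing the finite mass $\mu(F)$ over an efficient cover, then conclude $\dim_H(F)\le d$. First I would fix $s>d$ and a small $\eps>0$, and set $d':= d+\eps/2 < s$. For each $x\in F$, the hypothesis $\underline d_\mu(x)\le d \le d'$ means $\liminf_{r\to 0}\frac{\log\mu(B(x,r))}{\log r}\le d'$, so there is a sequence of radii $r\searrow 0$ along which $\log\mu(B(x,r))\ge d'\log r$, i.e.\ $\mu(B(x,r))\ge r^{d'}$ (the inequality flips because $\log r<0$). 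Hence for every $x\in F$ and every $\delta>0$ there exists $r=r(x)\in(0,\delta)$ with $\mu(B(x,r(x)))\ge r(x)^{d'}$.

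Next I would extract a controlled subcover. The collection $\{B(x,r(x)):x\in F\}$ covers $F$ by balls of radius less than $\delta$; apply the Vitali (or the elementary $5r$-) covering lemma to obtain a countable, pairwise disjoint subfamily $\{B(x_i, r(x_i))\}_{i}$ such that $F\subset\bigcup_i B(x_i, 5r(x_i))$. Then, writing $U_i := B(x_i, 5r(x_i))$, which has diameter at most $10r(x_i)\le 10\delta$, we estimate
\[
\sum_i |U_i|^s \le \sum_i (10 r(x_i))^s = 10^s\sum_i r(x_i)^s \le 10^s\,\delta^{s-d'}\sum_i r(x_i)^{d'} \le 10^s\,\delta^{s-d'}\sum_i \mu(B(x_i,r(x_i))) \le 10^s\,\delta^{s-d'}\,\mu(\R),
\]
using $r(x_i)\le\delta$, the defining inequality $\mu(B(x_i,r(x_i)))\ge r(x_i)^{d'}$, and disjointness of the balls $B(x_i,r(x_i))$ together with finiteness of $\mu$. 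Since $s-d'>0$, letting $\delta\to 0$ gives $\mathcal H^s(F)=0$, hence $\dim_H(F)\le s$. As $s>d$ was arbitrary, $\dim_H(F)\le d$.

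The main technical point — really the only one — is the passage from the pointwise $\liminf$ hypothesis to a \emph{uniform} family of admissible radii and then to a disjointified subcover whose total $s$-sum is controlled; this is exactly where the $5r$-covering lemma (valid in $\R^n$) and the finiteness of $\mu$ are used. Everything else is bookkeeping with the sign of $\log r$ and the inequality $d'<s$. One should note this argument does not require $\mu$ to be supported on $F$; finiteness of $\mu$ on the ambient space suffices, which is consistent with the statement.
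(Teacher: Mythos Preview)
Your argument is correct: this is the standard mass-distribution/Vitali covering proof of the upper bound in the Frostman-type comparison between lower pointwise dimension and Hausdorff dimension. The only cosmetic point is that the parameter $\eps$ plays no role beyond defining $d'$; you could simply fix any $d'\in(d,s)$ directly.

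As for comparison with the paper: the paper does not actually give a proof of this proposition. It is stated in Section~\ref{sec:prelim dim} as background and referred to \cite[Section~7]{Pesbook}. The argument there is essentially the one you wrote---use the lower pointwise dimension hypothesis to produce, at each point and every scale, a ball whose $\mu$-mass dominates a power of its radius, extract a disjoint subfamily via the $5r$-covering lemma, and control the $s$-sum by the total mass. So your proposal matches the standard reference the paper invokes.
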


The \emph{Hausdorff dimension} of the measure $\mu$ is defined by
\[
{\dim_H}(\mu) := \inf \left\{ {\dim_H}(Z): \mu(Z)=1 \right\}.
\]

\begin{prop}\label{prop:lower}
Given a finite Borel measure $\mu$, if $d_{\mu}(x) =d$ for $\mu$-almost every $x \in F$, then ${\dim_H}(\mu)=d$.
\end{prop}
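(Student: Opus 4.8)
The statement is the classical "mass distribution"-type lower bound together with the matching upper bound, applied to the Hausdorff dimension of a measure. The plan is to prove two inequalities: $\dim_H(\mu)\le d$ and $\dim_H(\mu)\ge d$, under the hypothesis that $d_\mu(x)=d$ for $\mu$-a.e.\ $x\in F$. For the upper bound, first I would let $Z\subset F$ be the full-measure set on which the pointwise dimension equals $d$; in particular $\underline d_\mu(x)=d$ for every $x\in Z$, so by Proposition \ref{prop:upper} (applied with the set $Z$ in the role of $F$, and any $d'>d$) we get $\dim_H(Z)\le d'$ for every $d'>d$, hence $\dim_H(Z)\le d$. Since $\mu(Z)=1$, the definition of $\dim_H(\mu)$ gives $\dim_H(\mu)\le \dim_H(Z)\le d$. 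Strictly speaking Proposition \ref{prop:upper} as stated gives $\le d$ from $\underline d_\mu\le d$ directly, so one line suffices here.

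For the lower bound, I would argue by contradiction: suppose $\dim_H(\mu)=s<d$, and pick $s'$ with $s<s'<d$. Then there is a set $Z$ with $\mu(Z)=1$ and $\dim_H(Z)<s'$, so $\mathcal H^{s'}(Z)=0$. The idea is that this forces $\mu$ to be, up to a null set, supported on a set that is "too thin" to carry mass with pointwise dimension $d$. Concretely, on a further full-measure subset $Z'\subseteq Z$ we have, for every $x\in Z'$, $\mu(B(x,r))\le r^{s'}$ for all sufficiently small $r$ (say $r<\rho(x)$), since $\lim_{r\to0}\frac{\log\mu(B(x,r))}{\log r}=d>s'$. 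Partition $Z'$ into the countably many pieces $Z'_k:=\{x\in Z': \rho(x)>1/k\}$; one of them, say $Z'_{k_0}$, has positive $\mu$-measure. Now take a $\delta$-cover $\{U_i\}$ of $Z$ (hence of $Z'_{k_0}\cap Z$) with $\sum_i|U_i|^{s'}$ arbitrarily small and $\delta<1/k_0$; we may assume each $U_i$ meets $Z'_{k_0}$, pick $x_i\in U_i\cap Z'_{k_0}$, and then $U_i\subset B(x_i,2|U_i|)$ with $2|U_i|<2/k_0$, so (adjusting the constant, or replacing $s'$ by something slightly smaller to absorb the factor $2^{s'}$) $\mu(U_i)\le \mu(B(x_i,2|U_i|))\le (2|U_i|)^{s'}$. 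Summing, $\mu(Z'_{k_0})\le\sum_i\mu(U_i)\le 2^{s'}\sum_i|U_i|^{s'}$, which can be made arbitrarily small — contradicting $\mu(Z'_{k_0})>0$.

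The main obstacle, and the only genuinely delicate point, is the Vitali-type bookkeeping in the lower bound: one must pass from the pointwise control $\mu(B(x,r))\le r^{s'}$, which holds only for $r$ below an $x$-dependent threshold, to a uniform statement on a positive-measure set, and then relate an arbitrary small-diameter set $U_i$ in the cover to a ball centered at a point of that set. Both steps are handled by the exhaustion $Z'=\bigcup_k Z'_k$ and by the elementary inclusion $U_i\subset B(x_i,2|U_i|)$ for any $x_i\in U_i$; the factor $2$ is harmless since $s'$ was chosen with room to spare below $d$. Everything else is bookkeeping with the definitions of $\mathcal H^s$ and $\dim_H$. Note that Proposition \ref{prop:lower} is exactly the Billingsley-type lemma, and this is the standard proof; I expect the author's proof to follow the same two-sided scheme, citing \cite{Pesbook}.
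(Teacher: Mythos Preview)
Your argument is correct and is the standard Billingsley-type proof. The paper itself does not prove Proposition~\ref{prop:lower}; it simply records it as a known fact with a reference to \cite[Section 7]{Pesbook}. So there is nothing to compare against beyond noting that your proof is exactly the kind of argument one finds in that reference.

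One tiny bookkeeping slip: in the lower bound you take $\delta<1/k_0$ and then write $2|U_i|<2/k_0$, but what you actually need is $2|U_i|<1/k_0$ so that the radius $2|U_i|$ falls below the threshold $\rho(x_i)>1/k_0$. This is fixed by taking $\delta<1/(2k_0)$ instead; you clearly saw the issue (``adjusting the constant''), so this is cosmetic.
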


We finish this section by defining another notion of dimension, which applies whenever we have a dynamical system $f:X\to X$ and a partition $\P$, which thus gives rise to the notion of $k$-cylinders.  Denote the $k$-cylinder at $x\in X$ by $I_k(x)$.  Note that $x$ could be in two $k$-cylinders simultaneously, but in this case we make an arbitrary choice: in fact these points will not be important here due to their small dimension and measure.  In the case of a shift, this issue does not arise, and $\P$ can be taken as the partition into 1-cylinders.

\begin{defi}
Given a system as above with a probability measure $\mu$ on $X$, we define the \emph{lower Markov pointwise dimension} and \emph{upper Markov pointwise dimension} of $\mu$ at the point $x$ as
$$\underline\delta_{\mu}(x) := \liminf_{k \to \infty} \frac{\log \mu(I_k(x))}{\log |I_k(x)|} \quad \text{ and } \quad \overline\delta_{\mu}(x) := \limsup_{k \to \infty} \frac{\log \mu(I_k(x))}{\log |I_k(x)|},$$
respectively. Here, $| \cdot|$ denotes the euclidean length. If these values coincide, then their common value, the \emph{Markov pointwise dimension} of $\mu$ at $x$, is denoted $\delta_{\mu}(x)$.
\end{defi}

\section{Inducing schemes}
\label{sec:ind}
In this section we introduce our main technical tool, namely \emph{inducing schemes}. We show how its ergodic theory is related to that of the original system and how can it be understood using thermodynamic formalism for countable Markov shifts. A natural way of thinking of inducing schemes is as a generalisation of the first return time map.

We say that $\left(X,\{X_i\}_i, F,\tau\right)=(X,F,\tau)$ is an \emph{inducing scheme} for $(I,f)$ if
\begin{list}{$\bullet$}{\itemsep 0.2mm \topsep 0.2mm \itemindent -0mm \leftmargin=5mm}
\item $X$ is an interval containing a finite or countable
collection of disjoint intervals $\{X_i\}_i$ \st $F$ is hyperbolic and maps each $X_i$
diffeomorphically onto $X$, with bounded distortion (i.e. there exist $C>0$ and $\lambda>1$ such that $|DF^n|\ge C\lambda^n$, and $K>0$ so that for all $i$ and $x,y\in X_i$, $\left| DF(x)/DF(y)-1\right| \le K|x-y|$);
\item $\tau|_{X_i} = \tau_i$ for $\tau_i \in \N$ and $F|_{X_i} = f^{\tau_i}$.  If $x \notin \cup_iX_i$ then $\tau(x)=\infty$.
\end{list}

For our main theorems to hold, we don't need such a strong bounded distortion condition, but for other applications we often do, so we leave the condition in here.
The function $\tau:\cup_i X_i \to \N$ is called the {\em inducing time}. It may
happen that $\tau(x)$ is the first return time of $x$ to $X$, but
that is certainly not the general case.   We denote the set of points $x\in I$ for which there exists $k\in \N$ such that $\tau(F^n(f^k(x)))<\infty$ for all $n\in \N$ by $(X,F,\tau)^\infty$.

The space of $F$-invariant measures is related to the space of $f$-invariant measures. Indeed, given an $f$-invariant measure $\mu$, if there is an $F$-invariant measure $\mu_F$ such that for a subset $A\subset I$,
\begin{equation} \label{eq:ind-meas}
\mu(A)= \frac{1}{\int\tau~d\mu_F}\sum_{i} \sum_{k=0}^{\tau_i-1} \mu_F \left( f^{-k}(A) \cap X_i \right)
\end{equation}
where $\frac{1}{\int\tau~d\mu_F}<\infty$,
we call $\mu_F$ the \emph{lift} of $\mu$ and say that $\mu$ is a \emph{liftable} measure. Conversely, given a measure $\mu_F$ that is $F$-invariant we say that $\mu_F$ \emph{projects} to $\mu$ if \eqref{eq:ind-meas} holds. We say that an $f$-invariant probability measure $\mu$ is \emph{compatible} with the inducing scheme $(X,F, \tau)$ if
\begin{itemize}
\item  $\mu(X)>0$ and $\mu(X \setminus (X, F)^{\infty})=0$, and
\item there exists a $F$-invariant measure $\mu_F$ which projects to $\mu$
\end{itemize}

For a potential $\phi:I\to \R$, $x\in I$ and $k\in \N$, we define
$$S_k\phi(x):=\phi(x)+\phi\circ f(x)+\cdots + \phi\circ f^{k-1}(x).$$
Given an inducing scheme $(X,F, \tau)$, we define the induced potential $\Phi:X\to \R$ by
$\Phi(x)=S_{\tau(x)}\phi(x)$ for $x\in X$ whenever $\tau(x)<\infty$, and $-\infty$ otherwise.

Let $\mu$ be a liftable measure and  be $\nu$ be its lift. A classical result by Abramov \cite{Abr59} (see also \cite{PesSen08, Zwe05})  allows us to relate the entropy of both measures. Further results obtained in  \cite{PesSen08, Zwe05} allow us to do the same with the integral of a given potential  $\phi:I\to \R$.   Indeed, for the induced potential $\Phi$ we have that
\begin{equation*}
h(\mu)=\frac{h(\nu)}{\int \tau \ d \nu}  \textrm{ and  } \int \phi \ d \mu= \frac{\int \Phi \ d \nu}{\int \tau \ d \nu}.
\end{equation*}

The following result appeared in \cite[Theorem 3.3]{IomTod11}  (it can be proved using \cite{Tod}). It provides us with a countable family of relevant inducing schemes.

\begin{teo}\label{thm:schemes}
Let $f\in \F$.  There exist a countable collection $\{(X^n,F_n)\}_n$ of inducing schemes  such that:

\begin{enumerate}[label=({\alph*}),  itemsep=0.0mm, topsep=0.0mm, leftmargin=7mm]
\item any ergodic invariant probability measure $\mu$ with $\lambda(\mu)>0$ is compatible with one of the inducing schemes $(X^n, F_n)$.  In particular there exists an ergodic $F_n$-invariant probability measure $\mu_{F_n}$ which projects to $\mu$;
\item any equilibrium state for $-t\log|Df|$ where $t\in \R$ with $\lambda(\mu)>0$, or for a H\"older continuous potential $\phi:I \to \R$ with $\sup \phi<P(\phi)$, is compatible with all inducing schemes $(X^n, F_n)$.
\item if $f\in \F_G$ then $$dim_H\Big(I\sm\left(\cup_{n=1}^\infty(X^n, F_n)^\infty\right) \Big)=0.$$ \end{enumerate}
\end{teo}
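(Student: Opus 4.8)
\textbf{Proof proposal for Theorem~\ref{thm:schemes}.}

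The plan is to construct the family $\{(X^n,F_n)\}_n$ by means of a fixed first-return (or Hofbauer-tower) inducing scheme over a shrinking sequence of base intervals, and then to establish the three assertions in turn, with part (c) being the genuinely substantive claim. First I would recall from \cite{Tod} (see also Appendix~\ref{sec:Hof}) the construction of a \emph{canonical} inducing scheme: picking a suitable nice interval $X$ around a point of $I$ and taking $F$ to be the first return map to $X$, the hyperbolicity and bounded-distortion conditions in the definition of an inducing scheme follow from properties (a)--(d) of the class $\F$ (negative Schwarzian gives the Koebe distortion control, non-flat critical points give expansion away from $\Crit$, transitivity gives that $X$ is visited). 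The countable family is then obtained by letting $X^n$ range over a countable basis of such nice intervals (say, nice intervals with dyadic endpoints, or the standard sequence of nested nice intervals shrinking to a non-precritical point), so that every point of positive-Lyapunov-exponent dynamical relevance eventually lands in some $X^n$ and returns to it infinitely often.

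For part (a), given an ergodic $\mu$ with $\lambda(\mu)>0$, the Birkhoff ergodic theorem together with $\lambda(\mu)>0$ forces $\mu$-a.e.\ $x$ to have expanding behaviour, so $\mu$-a.e.\ $x$ stays away from $\Crit$ along a positive-density set of times; a standard argument (Pliss times / the fact that an expanding measure cannot be supported on the critical orbit) shows $\mu$ gives positive mass to some $X^n$ and that $\mu$-a.e.\ point returns to $X^n$ infinitely often, so $\mu(X^n\sm(X^n,F_n)^\infty)=0$. The existence of the ergodic lift $\mu_{F_n}$ is then the classical first-return-map correspondence (Abramov/Kac, as recalled before the theorem): the induced measure $\mu_{F_n}(\cdot)=\mu(\cdot\cap X^n)/\mu(X^n)$ is $F_n$-invariant, ergodic, has $\int\tau\,d\mu_{F_n}=1/\mu(X^n)<\infty$, and projects to $\mu$ via \eqref{eq:ind-meas}. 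For part (b), an equilibrium state $\mu$ for $-t\log|Df|$ with $\lambda(\mu)>0$, or for a H\"older $\phi$ with $\sup\phi<P(\phi)$, is by the results quoted after Theorem~\ref{thm:press der} (and the general theory one surveys in Section~\ref{sec:app ind}) an ergodic measure with positive entropy and positive Lyapunov exponent, and moreover it charges \emph{every} nice interval (an equilibrium state of positive entropy cannot be concentrated on the complement of any open set, since by the variational principle and ergodicity its support is all of $I$ by transitivity); hence the argument of part (a) applies with $X^n$ replaced by an arbitrary member of the family.

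The main obstacle is part (c): showing $\dim_H\big(I\sm\bigcup_n(X^n,F_n)^\infty\big)=0$ when $f\in\F_G$. The set in question consists of points that, for every $X^n$, eventually never return to $X^n$; since the $X^n$ form a basis, such a point either is attracted to $\Crit$ or has its orbit eventually avoiding a neighbourhood of some point for all time in a way incompatible with transitivity of $f$ on $I$ — the serious case is the set $\mathrm{Sing}$ of points whose $\omega$-limit set meets $\Crit$ (or lies in the "bad" non-expanding part). The strategy is to control $\dim_H(\mathrm{Sing})$ via the pressure function: by part (a) every ergodic measure of positive Lyapunov exponent \emph{is} compatible with the scheme, so any invariant measure carried by $\mathrm{Sing}$ must have zero Lyapunov exponent, hence by the (conditional) variational principle for dimension — concretely, by applying Proposition~\ref{prop:upper} together with a Manning--McCluskey / Bowen-type bound relating the local dimension to $p(t)$ and using that $f\in\F_G$ gives $t^+=1$ with the conformal measure $m_1=$ Lebesgue from \S\ref{subsec:conf} — one gets that the Lebesgue-conformal estimate forces $\mathcal{H}^s(\mathrm{Sing})=0$ for every $s>0$. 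Equivalently, one invokes the result of \cite{BRSS} (quoted in the introduction: $f\in\F_G$ and $|Df^n(f(c))|\to\infty$) to see the critical orbits escape, so the set of points accumulating on $\Crit$ has a natural cover by pullbacks of small neighbourhoods of $\Crit$ whose diameters decay and whose multiplicities are controlled by bounded distortion, giving $s$-dimensional Hausdorff measure zero for all $s>0$. I expect the delicate point to be handling critical points of high order and the possibility of "mixed" behaviour among several critical points, which is exactly where the hypothesis $f\in\F_G$ (rather than merely $f\in\F$) is used to rule out a wild-attractor-type set of positive dimension.
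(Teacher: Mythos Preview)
The paper does not actually prove this theorem: it is quoted verbatim from \cite[Theorem 3.3]{IomTod11} with the remark that it ``can be proved using \cite{Tod}'', so there is no in-paper argument to compare against beyond the machinery of the Hofbauer extension sketched in Appendix~\ref{sec:Hof} and the decent-return construction in Appendix~\ref{sec:appendix}. Your overall architecture --- build the schemes from nice intervals, lift measures of positive exponent, and then bound the dimension of the residual set --- is the right shape, and is indeed what is done in the references.

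That said, there is a genuine gap in your execution of part~(a), and it propagates into~(b). You write that $F_n$ is the ``first return map to $X^n$'' and then invoke the Kac formula $\mu_{F_n}(\cdot)=\mu(\cdot\cap X^n)/\mu(X^n)$ with $\int\tau\,d\mu_{F_n}=1/\mu(X^n)$. But the inducing schemes used here are \emph{not} first return maps on $I$: the inducing time is the first $\delta$-decent return (equivalently, a first return in the Hofbauer extension $\hat I$, see Appendix~\ref{sec:appendix} and \cite{Bru95}), which can be strictly larger than the first return time in $I$. For a plain first return map to a nice interval one does not get the uniform hyperbolicity and bounded distortion required in the definition of an inducing scheme --- that is precisely why the Koebe-space (``decent'') condition is imposed. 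Consequently your Kac-type lifting formula is not available directly on $I$; one must first lift $\mu$ to $\hat\mu$ on the Hofbauer tower (this is where the hypothesis $\lambda(\mu)>0$ is genuinely used, via \cite{Kel89,BruKel98} as recalled in Appendix~\ref{sec:Hof}), and only then apply Kac to the first return to a domain $\hat X\subset\hat I$. Similarly, for~(b) the statement that an equilibrium state is compatible with \emph{all} the schemes is not just ``full support plus part~(a)'': one needs that $\hat\mu$ gives positive mass to every domain in the transitive part $\hat I_{\T}$, which comes from the Gibbs property of the induced equilibrium state together with the primitivity of $(\D_{\T},\to)$.

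Your sketch of~(c) is in the right spirit but is not yet a proof. The identification of the residual set with points of zero (lower) Lyapunov exponent is essentially correct, and the hypothesis $f\in\F_G$ is indeed what rules out a positive-dimensional exceptional set; but neither of the two mechanisms you propose (a Bowen--Manning bound via $p(t)$ and $m_1$, or a direct covering by pullbacks of critical neighbourhoods controlled by \cite{BRSS}) is carried far enough to give $\mathcal H^s=0$ for all $s>0$. In the cited works this is done by showing that points missed by every scheme fail to lift to $\hat I$, hence carry no measure of positive entropy, and then using the acip (or the associated conformal structure) to push the dimension of the non-liftable set to zero; you would need to supply that step rather than gesture at it.
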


 To give a trivial example of part of the importance of this theorem, suppose that a point $x_0\in I$ is a repelling periodic point for $f$: there exists $p\ge 1$ such that $f^p(x_0)=x_0$ and $|Df^p(x_0)|>1$.  Then for a small interval $X\ni x_0$ there is a subset $X_1\ni p$ such that $f^p(X_1)=X$.  So we have produced an inducing scheme $(X, F)$ with one branch $X_1$ and $\tau|_{X_1}=p$.  The measure $\mu:=\frac1p(\delta_{x_0}+
\delta_{f(x_0)}+\cdots + \delta_{f^{p-1}(x_0)})$ lifts to $(X, F)$, but is the unique measure which does so.  Therefore, if we were only interested in $\mu$, then $(X, F)$ is sufficient for our purposes, but if we wanted to consider other measures, for example with positive entropy, then $(X, F)$ is useless.

\subsection{Young towers}

Observe that an inducing scheme $(X,F,\tau)$ for $(I, f)$ can be used to build a tower, which in the context described here is often referred to as a Young tower (see eg \cite{You99}).  Even though we don't use these ideas here directly, we give a short description to clarify its relationship with the inducing scheme.  The tower is defined as the disjoint union
\[
\Delta = \bigsqcup_{i} \bigsqcup_{j = 0}^{\tau_i-1}
(X_i,j),
\]
with dynamics
\[
f_\Delta(x,j) = \left\{ \begin{array}{ll}
(x,j+1) & \mbox{ if } x \in X_i, j < \tau_i-1; \\
(F(x),0) & \mbox{ if } x \in X_i, j = \tau_i-1. \\
\end{array} \right.
\]
Given $i$ and  $0\le j<\tau_i$, let $\Delta_{i,j} :=\{(x,j):
x\in X_i\}$ and $\Delta_l:=\bigcup_{i}\Delta_{i,l}$ is
called the {\em $l$-th floor}. Define the natural projection $\pi_\Delta:\Delta \to X$ by $\pi_\Delta(x,j) =
f^j(x)$, and $\pi_X:\Delta \to X$ by $\pi_X(x,j) = x$.
Note that $(\Delta, f_\Delta)$ is a Markov system, and the first return map of $f_\Delta$ to the \emph{base} $\Delta_0$ is isomorphic to $(X,F,\tau)$.

Also, given $\psi:I\to \R$, let $\psi_\Delta:\Delta\to \R$ be defined by $\psi_\Delta(x,j) =\psi(f^j(x))$.  Then the induced potential of $\psi_\Delta$ to the first return map to $\Delta_0$ is exactly the same as the induced potential of $\psi$ to the inducing scheme $(X,F,\tau)$.

\subsection{Coding}

One of the main tools in the proof of the results presented here is that an inducing scheme as described here can be coded by the full-shift on at most countably many symbols.   That is, given a point $x\in (X, F)^\infty$, this point is given the code $\pi x=(x_0, x_1, \ldots)$ where $x_k=i$ if $F^k(x)\in X_i$.  Notice that $\pi$ is bijective due to the hyperbolicity of $F$.  Thus there is a conjugacy
\begin{equation*}
\begin{CD}
(X,F)^\infty @>F>> (X,F)^\infty\\
@V{\pi}VV	@VV{\pi}V\\
\Sigma @>\sigma>>\Sigma
\end{CD}
\end{equation*}

Moreover, the potentials $\Phi:(X, F) \to [-\infty, \infty]$ have symbolic versions $\Phi\circ \pi^{-1}:\Sigma \to [-\infty, \infty]$.  We abuse notation by not explicitly distinguishing between the symbolic and standard version of potentials.   Observe that if $\Phi:\cup_iX_i\to [-\infty, \infty]$ is H\"older, then the hyperbolicity and distortion conditions on the inducing scheme imply that the symbolic version is also weakly H\"older continuous.  Moreover the properties of our inducing schemes imply that the potential $-\log|DF|$ is also H\"older continuous and its symbolic version is  weakly H\"older continuous.

\subsection{Induced level sets} \label{ind-level}

Given an inducing scheme $(X, F, \tau)$, and $\lambda\in \R$, let
\[J_F(\lambda) := \left\{ x \in X : \lim_{k \to \infty} \frac{\sum_{j=0}^{k-1} \log |DF(F^j(x))|}{\sum_{j=0}^{k-1} \tau(F^j(x))} = \lambda \right\}. \]
A key observation here is that if $x\in (X, F)^\infty$, then  $x\in J(\lambda)$ if and only if  $x\in J_F(\lambda)$.  Moreover, the same holds if $f^k(x)\in (X, F)^\infty$ for some $k\in \N$.  Therefore the Lyapunov spectrum of $(I, f)$ can be completely described via inducing schemes so long as inducing schemes cover a sufficient amount of points in $J(\lambda)$: that is, so long as there is no subset of $J(\lambda)$ of larger Hausdorff dimension than $J_F(\lambda)$ for all our inducing schemes $(X, F, \tau)$.

\section{Implementing the inducing approach}
\label{sec:app ind}

In this section we take the potential $\phi_t-p_f(t)=-t\log|Df|-p_f(t)$ and consider its induced version $\Psi_t=-t\log|DF|-\tau p_f(t)$.  Furthermore, for $i\in \N$, let $\Psi_{i,t}:=\sup_{x\in X_i}\Psi_t(x)$.
The principal result guaranteeing that we can study our measures and multifractal properties using inducing schemes is the following.

\begin{teo}
Given $f\in \F_T$, there exists an inducing scheme $(X, F,\tau)$
such that for each $t\in (-\infty, t^+)$,
\begin{enumerate}[label=({\alph*}),  itemsep=0.0mm, topsep=0.0mm, leftmargin=7mm]
\item $P(\Psi_t)=0$;
\item $\sum_i\tau_i e^{\Psi_{i, t}}<\infty$.
\end{enumerate}
\label{thm:psi ind}
\end{teo}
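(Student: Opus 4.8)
The plan is to leverage Theorem \ref{thm:schemes} to pick a single inducing scheme that carries enough of the ergodic theory, and then use the fact that a pressure-$0$ potential on the induced (countable full-shift) system corresponds exactly to the equilibrium-state condition downstairs. More precisely, by Theorem \ref{thm:schemes}(b), the equilibrium state $\mu_t$ for $\phi_t=-t\log|Df|$ provided by Theorem \ref{thm:main eq}(a) (which exists and has $\lambda(\mu_t)>0$, hence positive entropy, for $t<t^+$) is compatible with \emph{all} the inducing schemes $(X^n,F_n)$ in the countable family. Fix one of them and call it $(X,F,\tau)$. Let $\mu_{t,F}$ be the lift of $\mu_t$, so that Abramov's and Kac's formulae give $h(\mu_t)=h(\mu_{t,F})/\int\tau\,d\mu_{t,F}$ and $\int\phi_t\,d\mu_t=\int\Phi_t\,d\mu_{t,F}/\int\tau\,d\mu_{t,F}$, where $\Phi_t$ is the induced potential. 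Writing $\Psi_t=\Phi_t-\tau p(t)=-t\log|DF|-\tau p(t)$ and using $\int\tau\,d\mu_{t,F}<\infty$ together with $P(\phi_t)=p(t)$ and $P(\phi_t)=h(\mu_t)+\int\phi_t\,d\mu_t$, one computes
$$h(\mu_{t,F})+\int\Psi_t\,d\mu_{t,F}=\Big(\int\tau\,d\mu_{t,F}\Big)\Big(h(\mu_t)+\int\phi_t\,d\mu_t-p(t)\Big)=0.$$
By the Variational Principle for the countable full-shift (Theorem, Variational Principle, applied to the weakly Hölder symbolic version of $\Psi_t$, valid since $-\log|DF|$ is Hölder and $\tau\ge1$), this gives $P(\Psi_t)\ge 0$.

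For the reverse inequality $P(\Psi_t)\le 0$, I would argue that any $F$-invariant probability measure $\nu$ on $X$ with $-\int\Psi_t\,d\nu<\infty$ (in particular $\int\tau\,d\nu<\infty$) projects to an $f$-invariant measure $\mu=\pi_*\nu$, and running the Abramov/Kac identities in reverse yields $h(\nu)+\int\Psi_t\,d\nu=\big(\int\tau\,d\nu\big)\big(h(\mu)+\int\phi_t\,d\mu-p(t)\big)\le\big(\int\tau\,d\nu\big)\big(P(\phi_t)-p(t)\big)=0$, so taking the supremum over $\nu$ and invoking the Variational Principle again gives $P(\Psi_t)\le0$. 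Combining the two inequalities proves (a). Here one must be slightly careful that the supremum in the countable-shift Variational Principle can also be approached by measures of infinite $\tau$-integral; but such measures do not lift to probability measures on $I$ and for them $\int\Psi_t\,d\nu=-\infty$ (since $p(t)>0$ for $t<1$ forces $-\tau p(t)$ to dominate, or more robustly $\int\Psi_t\,d\nu\le-t\int\log|DF|\,d\nu$ which is handled by the expansion estimate), so they do not affect the pressure; alternatively one appeals directly to the approximation property Theorem \ref{thm:Gur approx} by compact subsystems, on each of which $\tau$ is bounded and the reverse inequality is immediate.

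For (b), the key point is that $\sum_i\tau_i e^{\Psi_{i,t}}<\infty$ is a strengthening of finiteness of pressure which, via Theorem \ref{thm:Gibbs BIP} and Theorem \ref{thm:RPF mini}, is equivalent to the existence of an invariant Gibbs measure for $\Psi_t$ that is additionally an equilibrium state (finite entropy). Concretely, since $F$ maps each $X_i$ diffeomorphically onto $X$ with bounded distortion, the term $e^{\Psi_{i,t}}$ is comparable to $|X_i|^t e^{-\tau_i p(t)}$ up to a distortion constant, so $\sum_i\tau_i e^{\Psi_{i,t}}\asymp\sum_i\tau_i|X_i|^t e^{-\tau_i p(t)}$. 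I would obtain this from the conformal measure: by Theorem \ref{thm:main eq}(b) there is a $(\phi_t-p(t))$-conformal measure $m_t$ on $I$, and pulling it back through the inducing scheme shows $m_t(X_i)\asymp e^{\Psi_{i,t}}$, whence $\sum_i e^{\Psi_{i,t}}\asymp\sum_i m_t(X_i)\le m_t(X)<\infty$, giving $P(\Psi_t)<\infty$ and the summability \emph{without} the $\tau_i$ weight. Upgrading to the weighted sum $\sum_i\tau_i e^{\Psi_{i,t}}<\infty$ is exactly the statement that the Gibbs/equilibrium measure $\mu_{t,F}$ for $\Psi_t$ has $\int\tau\,d\mu_{t,F}<\infty$ (since $\mu_{t,F}(X_i)\asymp e^{\Psi_{i,t}}$ by the Gibbs property with Gibbs constant $P(\Psi_t)=0$), which is precisely liftability of $\mu_t$ — and that holds because $\mu_t$ \emph{is} an $f$-invariant probability measure compatible with the scheme, i.e. it came from downstairs in the first place. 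The main obstacle, and the place requiring genuine care rather than bookkeeping, is this last equivalence: showing that the tail of $\tau$ under the induced equilibrium measure is summable. This is where the hypothesis $t<t^+$ enters essentially (by definition of $t^+$, $p(t)>-\lambda_m t$, which gives room in the exponent $e^{-\tau_i p(t)}$ to absorb the polynomial-in-$\tau_i$ factor against the sub-exponential growth of the number of branches of a given inducing time, the latter being controlled by $\lambda_{\sup}<\infty$ and the Hölder/distortion bounds); for $t\ge t^+$ the sum genuinely diverges, which is the analytic mechanism behind the phase transition.
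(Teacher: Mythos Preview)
Your proposal is circular. In the logical structure of the paper (and of \cite{IomTod10}), Theorem~\ref{thm:psi ind} is the principal technical input from which Theorem~\ref{thm:main eq} is deduced: look at the Corollary immediately following Theorem~\ref{thm:psi ind}, where the existence of the equilibrium state $\mu_t$ and of the conformal measure $m_t$ are obtained \emph{from} (a) and (b). Your argument runs the implication backwards: you invoke Theorem~\ref{thm:main eq}(a) to produce $\mu_t$, lift it to show $P(\Psi_t)\ge 0$, and then invoke Theorem~\ref{thm:main eq}(b) to get $m_t$ and the summability in (b). None of these ingredients are available at this stage.

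The substantive difficulty that your approach bypasses is exactly the pair of statements $P(\Psi_t)\ge 0$ and $\int\tau\,d\mu_{\Psi_t}<\infty$, without an equilibrium state for $f$ already in hand. The paper's route is: take a sequence $(\mu_n)_n\subset\M_f$ with $h(\mu_n)+\int\psi_t\,d\mu_n\to 0$ (this exists by the Variational Principle for $f$, no equilibrium state needed), and show that their lifts $\nu_n$ to a suitable inducing scheme satisfy a \emph{uniform} bound $\int\tau\,d\nu_n\le C$. This uniform bound is the heart of the matter and is obtained via the Hofbauer extension: since $t<t^+$, the $\mu_n$ have entropy bounded below by some $K>0$, so by Theorem~\ref{thm:cusp facts} their lifts to the tower give mass $\ge\eta>0$ to a fixed compact piece $\hat K$; one then chooses $X$ so that the inducing scheme is a first return map on the tower to a domain contained in $\hat K$, and Kac's Lemma yields $\int\tau\,d\nu_n\le 1/\eta$. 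Abramov then gives $P(\Psi_t)\ge 0$, Theorem~\ref{thm:Gibbs BIP} produces the Gibbs measure $\mu_{\Psi_t}$, and (b) follows because $\nu_{n_k}\to\mu_{\Psi_t}$ passes the uniform $\tau$-bound to the limit. Only after all this does one project to obtain $\mu_t$. The further point that a \emph{single} scheme works for every $t\in(-\infty,t^+)$ also requires a separate Hofbauer-tower argument (item (3) in the paper's sketch), which your appeal to Theorem~\ref{thm:schemes}(b) again short-circuits by presupposing the $\mu_t$.

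Your argument for the easy direction $P(\Psi_t)\le 0$, via approximation by compact subsystems and projection, is correct and matches the paper's.
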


We sketch the proof of this below, but first state and prove a corollary.

\begin{coro}
Under the conditions of Theorem~\ref{thm:psi ind},
\begin{enumerate}[label=({\roman*}),  itemsep=0.0mm, topsep=-2.5mm, leftmargin=7mm]
\item there exists an equilibrium state $\mu_t$ for $\phi_t$;
\item $t\mapsto P(\phi_t)$ is $C^1$ in $(-\infty, t^+)$;
\item there exists a $\psi_t$-conformal measure $m_t$, and $\mu_t\ll m_t$.
\end{enumerate}
\end{coro}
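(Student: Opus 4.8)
The plan is to transfer the conclusions of Theorem~\ref{thm:psi ind} from the induced system, which is conjugate to a full-shift on a countable alphabet, back to $(I,f)$ using the tools assembled in Section~\ref{sec:ind} (Abramov's formula, Kac's formula) and Section~\ref{sec:CMS} (the Ruelle--Perron--Frobenius machinery, Theorem~\ref{thm:RPF mini}). The induced potential $\Psi_t=-t\log|DF|-\tau p(t)$ is weakly H\"older in its symbolic coding, since $-\log|DF|$ is H\"older on the inducing scheme and $\tau$ is constant on each $X_i$; and by Theorem~\ref{thm:psi ind}(a) it has pressure $P(\Psi_t)=0<\infty$.

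First I would produce the conformal measure. By Theorem~\ref{thm:RPF mini} applied to $\Psi_t$ (with $\lambda=e^{P(\Psi_t)}=1$) there is a conservative conformal measure $m_{\Psi_t}$ and a continuous $h$ with $L_{\Psi_t}h=h$, $\int h\, dm_{\Psi_t}<\infty$; set $\nu_t:=h\, m_{\Psi_t}$, an $F$-invariant measure. The summability condition Theorem~\ref{thm:psi ind}(b), $\sum_i\tau_i e^{\Psi_{i,t}}<\infty$, is exactly what is needed to check $\int\tau\, d\nu_t<\infty$ (Gibbs-type bounds on cylinders control $\nu_t(X_i)$ by $e^{\Psi_{i,t}}$ up to a constant), so $\nu_t$ can be normalised and, via Abramov and Kac, projects to an $f$-invariant probability measure $\mu_t$ with
$$h(\mu_t)=\frac{h(\nu_t)}{\int\tau\, d\nu_t},\qquad \int\phi_t\, d\mu_t=\frac{\int\Psi_t\, d\nu_t+\tau p(t)\,(\text{term})}{\int\tau\, d\nu_t},$$
where one unwinds $\Psi_t=S_\tau\phi_t-\tau p(t)$. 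Since $\nu_t$ is the equilibrium state for $\Psi_t$ it satisfies $h(\nu_t)+\int\Psi_t\, d\nu_t=P(\Psi_t)=0$, and dividing by $\int\tau\, d\nu_t$ gives $h(\mu_t)+\int\phi_t\, d\mu_t=p(t)$, so $\mu_t$ is an equilibrium state for $\phi_t$; this is (i). For the $\psi_t$-conformal measure $m_t$ on $I$ (here $\psi_t=\phi_t-p(t)$), I would push the conformal measure $m_{\Psi_t}$ for the induced system forward along the tower floors, $m_t:=\frac{1}{Z}\sum_i\sum_{k=0}^{\tau_i-1}(f^k)_*\big(m_{\Psi_t}|_{X_i}\big)$ with $Z$ the normalising constant; the conformality relation $dm_{\Psi_t}(F(x))=e^{-\Psi_t(x)}dm_{\Psi_t}(x)$ together with $\Psi_t=S_\tau\psi_t$ unwinds to $dm_t(f(x))=e^{-\psi_t(x)}dm_t(x)$ on sets where $f$ is injective, after a telescoping computation floor by floor — this is precisely the content announced for Appendix~\ref{sec:appendix}, which relates conformal measures of the induced and original systems and gives conditions for the projection to be well defined; I would invoke that. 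Finiteness of $Z$ again uses Theorem~\ref{thm:psi ind}(b). That the support is spread correctly and $m_t$ is genuinely $\psi_t$-conformal on all of $I$ (not just on $X$) uses topological transitivity (part (c) of the definition of $\F$) to spread the measure around by pulling back. Finally $\mu_t\ll m_t$ follows because $\mu_t=h\, m_{\Psi_t}$ on the induced system with $h$ continuous (hence bounded on cylinders), and both projections are built by the same tower sum, so absolute continuity is preserved under projection; this gives (iii).

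For (ii), the $C^1$-dependence, I would argue as follows. For $t$ in a neighbourhood of a fixed $t_0<t^+$ the induced potentials $\Psi_t$ form a real-analytic family of weakly H\"older potentials of finite pressure on the full-shift, so by Theorem~\ref{thm:press der} the map $t\mapsto P(\Psi_t)$ is real analytic (in particular $C^1$) with $\frac{d}{dt}P(\Psi_t)=\int\frac{\partial}{\partial t}\Psi_t\, d\nu_t=-\int(\log|DF|+\tau p'(t))\, d\nu_t$ wherever $p'$ exists. But by Theorem~\ref{thm:psi ind}(a) we have $P(\Psi_t)\equiv 0$, so differentiating the identity $0=P(-t\log|DF|-\tau p(t))$ and solving for $p'(t)$ yields
$$p'(t)=-\frac{\int\log|DF|\, d\nu_t}{\int\tau\, d\nu_t}=-\int\log|Df|\, d\mu_t,$$
using Abramov/Kac in the last step; real analyticity of the right-hand side in $t$ (from the analytic dependence of the equilibrium state $\nu_t$ on the parameter, Theorem~\ref{thm:press der} and the implicit function theorem) gives $p\in C^1$ on $(-\infty,t^+)$.

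I expect the main obstacle to be the conformal-measure projection in (iii): one must check that the tower sum defining $m_t$ converges (this is where Theorem~\ref{thm:psi ind}(b) is indispensable), that the resulting measure is conformal \emph{globally} on $I$ and not merely on the inducing domain $X$ — which requires using transitivity to transport the measure and controlling distortion along the transport — and that no mass is lost on the set $I\setminus(X,F,\tau)^\infty$ left behind by the inducing scheme. This is exactly the delicate point the paper flags as needing a separate treatment in Appendix~\ref{sec:appendix}, so in the body of the proof I would state the projection result cleanly and defer its proof there. The entropy/integral bookkeeping in (i) and the differentiation in (ii) are, by contrast, essentially mechanical once Abramov's and Kac's formulas and Theorem~\ref{thm:press der} are in hand.
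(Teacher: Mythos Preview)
Your proposal is correct and follows essentially the same route as the paper: obtain the equilibrium state for $\Psi_t$ on the induced full-shift (the paper cites Theorem~\ref{thm:Gibbs BIP}, you go via Theorem~\ref{thm:RPF mini}, which amounts to the same thing here), use Theorem~\ref{thm:psi ind}(b) and the Gibbs property to get $\int\tau\,d\nu_t<\infty$ and project via Abramov/Kac for (i), differentiate the implicit relation $P(\Psi_t)=0$ (equivalently, show $s\mapsto\lambda(\mu_s)$ is continuous) for (ii), and invoke the conformal-measure projection of Appendix~\ref{sec:appendix} for (iii). One small caveat: the explicit tower-sum you wrote for $m_t$ is the \emph{invariant}-measure projection formula \eqref{eq:ind-meas}, not the conformal one---the construction in Theorem~\ref{thm:proj conf} instead weights each floor by the conformal factor $e^{-S_k\psi_t}$---but since you correctly defer to Appendix~\ref{sec:appendix} for the actual argument this does not affect the strategy.
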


\begin{proof}
The existence of an equilibrium state $\mu_{\Psi_t}$ for $\Psi_t$ follows from Theorem~\ref{thm:Gibbs BIP} and Theorem~\ref{thm:psi ind}(a).  Since  $\mu_{\Psi_t}$ is a Gibbs measure, to check that it projects to the original system, which is to check that $\int\tau~d\mu_{\Psi_t}<\infty$, follows from Theorem~\ref{thm:psi ind}(b), proving (i).

The fact that $s\mapsto \int\tau~d\mu_{\Psi_s}$ and $s\mapsto \int\log|DF|~d\mu_{\Psi_s}$ are continuous for $s\in (t-\eps, t+\eps)$ can be easily deduced, so long as $\mu_{\Psi_s}$  all lift to the same inducing scheme.  Therefore $s\mapsto \lambda(\mu_s)$ is continuous.  The proof of (ii) then follows from an analogue of Theorem~\ref{thm:press der}, which is standard.

The proof of the existence of a $\Psi_t$-conformal measure, which is equivalent to $\mu_{\Psi_t}$ follows from Theorem~\ref{thm:psi ind}(a) and Theorem~\ref{thm:RPF mini}.  The fact that it projects to a $\psi_t$-conformal measure follows as in Theorem~\ref{thm:proj conf}.
\end{proof}

The main steps in the proof of Theorem~\ref{thm:psi ind} are contained in Sections 4-6 of \cite{IomTod10}.

\begin{enumerate}[  itemsep=0.0mm, topsep=-2.5mm, leftmargin=7mm]
\item $P(\Psi_t)=0$:
\begin{enumerate}[label=({\roman*}),  itemsep=0.0mm, topsep=-2.5mm, leftmargin=7mm]
\item $P(\Psi_t)\le 0$ is straightforward,  see \cite[Lemma 4.1]{IomTod10}.  The idea is to assume the contrary.  Then Theorem~\ref{thm:Gur approx} implies that there is an approximation of $(X, F)$ by a subsystem with $N$ branches with $h(\nu_N)+\int\Psi_t>0$.  Projecting $\nu_N$ to $\mu_N$, Abramov's formula implies that $h(\mu_N)+\int\psi_t~d\mu_N> 0$, contradicting the  Variational Principle.
\item Proving $P(\Psi_t)\ge 0$ is much harder.  The strategy is to find a sequence $(\mu_n)_n$ such that $h(\mu_n)+\int\psi_t~d\mu_n\to 0$, but crucially to show that $\int\tau~d\mu_n$ is uniformly bounded.  The idea for this is that by our choice of $t\in (-\infty, t^+)$, there is a constant $K>0$ such that for all large $n$, $h(\mu_n)\ge K$.  This means that there is a finite set of domains in the Hofbauer extension (see Appendix~\ref{sec:Hof},  particularly Theorem~\ref{thm:cusp facts}) such that each $\mu_n$ gives definite mass to these domains.  By a compactness argument, this gives a domain $\hat X$ such that a subsequence of these lifted measures give $\hat X$ mass $\ge \eps>0$.  The inducing scheme is a first return map in the Hofbauer tower, so Kac's Lemma ultimately gives $\int\tau~d\nu_{n_k}\le \frac1\eps$ where $(\nu_{n_k})_k$ are the lifted versions of these measures, hence giving a uniform bound.  Moreover, Abramov's formula implies that $h(\nu_{n_k})+\int\Psi_t~d\nu_{n_k}\to 0$, so by the Variational Principle $P(\Psi_t)\ge 0$.
\end{enumerate}
\item The existence of the equilibrium state $\mu_{\Psi_t}$ then follows from Theorem~\ref{thm:Gibbs BIP}.  To project this, we need $\int\tau~d\mu_{\Psi_t}<\infty$.  This follows since we can show that the measures $\nu_{n_k}$ above converge to $\mu_{\Psi_t}$.
\item The fact that the inducing scheme works for all $t\in (-\infty, t^+)$ follows from an argument on the Hofbauer tower which says if we can't pass mass from an inducing scheme $(X, F)$ to which $\mu_{\psi_t}$ lifts, to one $(X', F')$  to which $\mu_{\psi_{t'}}$ lifts, then the lifted measure $\nu_{\Psi_{t'}}$ doesn't give mass to all branches of $(X', F')$, which is false.
\end{enumerate}

\section{Multifractal analysis: The Lyapunov spectrum}
\label{sec:mfa}
This Section is devoted to explaining and commenting on the proof of Theorem B.

We will first prove that if $\lambda \in \R \setminus A$ then  $L(\lambda)=\frac1\lambda\inf_{t\in \R}(p(t)+t\lambda)$. In this context our proof relies on the fact that we can an construct equilibrium state $\mu_{\lambda}$ such that $ \lambda(\mu_{\lambda}) = \lambda.$

{\bf{Proof of the lower bound.}}
Let $\lambda \in (-D^-p(t^+), \lambda_M)$. This part of the proof goes very much along the lines of the usual hyperbolic theory (see \cite[Chapter 7]{Pesbook}). Consider the equilibrium measure $\mu_{\lambda}$ corresponding to $-t_{\lambda} \log |Df|$ such that $ \lambda(\mu_{\lambda}) = \lambda. $ We have
\begin{enumerate}
\item $\mu_{\lambda}(I\sm J(\lambda))= 0$;
\item the measure $\mu_{\lambda}$ is ergodic;
  \item  by \cite{Hofdim}, the pointwise dimension is $\mu_{\lambda}$-almost everywhere equal to
\[\lim_{r \to 0} \frac{\log \mu_{\lambda}(B(x,r))}{\log r}   = \frac{h(\mu_{\lambda})}{\lambda}.\]
\end{enumerate}
Therefore, Proposition~\ref{prop:lower} implies
\[ \dim_H(J(\lambda)) \ge  \frac{h(\mu_{\lambda})}{\lambda}=\frac1\lambda\inf_{t\in \R}(p(t)+t\lambda),\]
where the final equality follows from the fact that $t\mapsto p(t)$ is $C^1$ and strictly convex in the relevant domain.

{\bf{Proof of the upper bound.}}  Again, let $\lambda \in (-D^-p(t^+), \lambda_M)$.  The situation in this case is a bit more subtle, we will make use of the induced systems and of its Markov structure. Initially, rather than considering pointwise dimension , we consider a local dimension that is adapted to the Markov structure, namely the Markov dimension defined at the end of Section~\ref{sec:prelim dim}. In Appendix \ref{sec:Markdim} we show that under suitable conditions, these two  local dimensions coincide. This will allow us to obtain an upper bound for the induced level set. In order to prove that this same bound also holds for the original level sets we will make use of the bi-Lipschitz property of the projection map.

Let $(X^n, F_n, \tau)$ be an induced system as constructed in Theorem \ref{thm:schemes} and $J_{F_n}(\lambda)$ be the induced level set defined in Section \ref{ind-level}.
Note that if $\mu_t$ is the equilibrium measure for $-t \log |Df|$ and $\lambda(\mu_t) = \lambda$ then the lifted measure $\mu_{F_n,t}$ has
\[\mu_{F{_n},t} \left( I \sm J_{F_n}(\lambda)   \right)  = 0 .\]

Denote by $I_k^n(x)$ the cylinder (with respect to the Markov dynamical system $(X^n,F_n)$) of length $k$ that contains the point $x \in X$, and by
$|I_k^n(x)|$ its Euclidean length. By definition there exists a positive constant $K\ge1$ such that for every $x \in X$  and every $k \in \mathbb{N}$ we have
\[ \frac{1}{K} \le \frac{|I_k^n(x)|}{|DF_n^k(x)|} \le K.        \]

A simple calculation (see \cite[Lemma 4.3]{IomTod11}) then shows that for every $x\in J_{F_n}(\lambda)$,  $\delta_{\mu_{F_n,t }}(x)=\frac{h(\mu_t)}{\lambda}$.  As proved in Appendix~\ref{sec:Markdim} (see also \cite[Proposition 3]{PolWe}), if $\delta_{\mu_{F_n,t} }(x)$ and $\lambda(x)$ exist then
\[d_{\mu_{F_n,t}}(x) = \delta_{\mu_{F_n,t}}(x).\]
Therefore Proposition~\ref{prop:upper} implies that
\[\dim_H(J_{F_n}(\lambda) )\le \frac{h(\mu_t)}{\lambda}.\]

The definition of $J(\lambda)$ and Theorem~\ref{thm:schemes}(c) implies that up to a set of Hausdorff dimension zero, any $x\in J(\lambda)$ has $k,n\in \N$ such that $f^k(x)\in J_{F_n}(\lambda)$, and since moreover the projection map $\pi: X_n \to I$  is bi-Lipschitz,
\begin{eqnarray*}
\dim_H(J(\lambda))
 \le \dim_H\left(\cup_n\cup_{k\ge 0}f^{-k}\left(\pi_n (J_{F_n}(\lambda))\right)\right) = \\ \sup_n\left\{\dim_H\left(\pi_n (J_{F_n}(\lambda))\right)\right\} =
\frac{h(\mu_t)}{\lambda},\end{eqnarray*}
thus completing the proof of the upper bound in the case $\lambda \in (-D^-p(t^+), \lambda_M)$.

The situation can be much more complicated if $\lambda \in (0, -D^-p(t^+))$ due to the lack of natural equilibrium states (note that the case $\lambda=-D^-p(t^+)$ follows as above, provided $-D^-p(t^+)>0$). However, we are able to prove the following

\begin{lema} \label{lem:small LE big dim}
Given $f\in \F_G$, for any  $\lambda \in (\lambda_m, -D^-p(t^+))$ and $\eps>0$ there exists an ergodic measure $\mu\in \M$ with $\lambda(\mu)=\lambda$ and $\dim_H(\mu)\ge t^++\frac{p_f(t^+)}\lambda-\eps$.
\end{lema}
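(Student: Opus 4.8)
The plan is to construct the required ergodic measure $\mu$ by interpolating, inside a suitable inducing scheme, between two measures: one with small Lyapunov exponent and large dimension-to-exponent ratio, and the equilibrium state $\mu_{t^+}$ (or measures approximating it) which has Lyapunov exponent $-D^-p(t^+)$. First I would fix an inducing scheme $(X,F,\tau)$ from Theorem~\ref{thm:schemes} (via Theorem~\ref{thm:psi ind}) to which $\mu_{t^+}$ --- or, if $t^+$ is itself a point where no equilibrium state exists, a sequence $\mu_{t_j}$ with $t_j\nearrow t^+$ --- lifts, and I would work with the coded full-shift $(\Sigma,\sigma)$ and the induced potentials $\Psi_t=-t\log|DF|-\tau p(t)$. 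Since $\lambda(\mu_{t_j})\to -D^-p(t^+)$ as $t_j\nearrow t^+$ and these measures have induced dimension ratio $h(\mu_{t_j})/\lambda(\mu_{t_j})\to t^++p_f(t^+)/(-D^-p(t^+))$ (by the computation behind Theorem~\ref{thm:main lyap}), one endpoint of the interpolation is understood.

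The second ingredient is a measure with Lyapunov exponent close to $\lambda_m$; this exists because $\lambda_m=\inf\{\lambda_f(\nu):\nu\in\M_f\}$, so there are ergodic measures $\nu$ (one may take them compatible with the inducing scheme, using the approximation property and Theorem~\ref{thm:schemes}, perhaps after enlarging or changing the scheme and passing to a common one) with $\lambda(\nu)$ as close to $\lambda_m$ as we wish. Then, given a target $\lambda\in(\lambda_m,-D^-p(t^+))$, I would write $\lambda$ as a convex combination $\lambda=s\lambda(\nu)+(1-s)\lambda(\mu_{t_j})$ and build, on the symbolic system, a single ergodic measure realising this: the standard device is to concatenate long blocks distributed according to $\nu$ with long blocks distributed according to $\mu_{t_j}$, choosing the block lengths so that the frequencies match $s$ and $1-s$, and then take an ergodic component of a weak-$*$ limit, or equivalently use a random concatenation (a skew product over a Bernoulli base) to get ergodicity directly. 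Such a measure $\mu$ has $\lambda(\mu)=\lambda$ by the ergodic theorem applied along the blocks, and $h(\mu)\ge s\,h(\nu)+(1-s)\,h(\mu_{t_j})\ge (1-s)h(\mu_{t_j})$ by concavity/affinity of entropy under this construction. Using $\dim_H(\mu)=h(\mu)/\lambda(\mu)$ for the projected measure (Theorem~\ref{thm:main lyap}'s dimension formula, i.e.\ the Markov/pointwise dimension identification in Section~\ref{sec:mfa} and Appendix~\ref{sec:Markdim}), one gets $\dim_H(\mu)\ge (1-s)h(\mu_{t_j})/\lambda$, and by letting $\lambda(\nu)\to\lambda_m$ and $t_j\to t^+$ one forces $s\to 0$ and $h(\mu_{t_j})/\lambda\to (h(\mu_{t^+})/(-D^-p(t^+)))\cdot(-D^-p(t^+))/\lambda$; careful bookkeeping with $p_f(t^+)=h(\mu_{t^+})-t^+(-D^-p(t^+))$ then yields $\dim_H(\mu)\ge t^++p_f(t^+)/\lambda-\eps$.

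The main obstacle I anticipate is the gluing/ergodicity step performed at the level of the \emph{original} map rather than the shift: the two measures $\nu$ and $\mu_{t_j}$ must be made compatible with a \emph{single} inducing scheme (their lifts must live on a common symbolic system) so that the concatenation makes sense and so that $\int\tau\,d\mu<\infty$ for the glued measure --- this last integrability is what guarantees the glued induced measure actually projects to an $f$-invariant probability measure with the claimed Lyapunov exponent and dimension. Controlling $\int\tau$ uniformly along the construction (so that the weak limit does not lose mass to infinity in the tower) is the delicate point, and I would handle it by keeping the $\nu$-blocks supported on finitely many branches and invoking the same Kac/Hofbauer-tower bounds used in the proof of Theorem~\ref{thm:psi ind}(b). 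Once integrability is secured, the entropy and exponent estimates are routine, and optimising the parameters $s$, $\lambda(\nu)\to\lambda_m$, $t_j\to t^+$ gives the stated lower bound.
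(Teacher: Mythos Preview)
Your route differs from the paper's and contains a concrete error, though the target inequality could still be reached with a different justification.

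The paper does not glue measures. It truncates the inducing scheme to finitely many branches, obtaining a uniformly hyperbolic subsystem on which the classical thermodynamic formalism applies for all $t\in\R$. On the $N$-branch truncation the pressure $p_N$ is strictly convex and real analytic, and since the minimal Lyapunov exponent of the truncated system approximates $\lambda_m$ as $N\to\infty$, for large $N$ there is $t_N$ with $-p_N'(t_N)=\lambda$. The corresponding equilibrium state $\mu_N$ is automatically ergodic, has $\lambda(\mu_N)=\lambda$ exactly, and $\dim_H(\mu_N)=\frac1\lambda\inf_t(p_N(t)+t\lambda)$, which tends to $\frac1\lambda\inf_t(p(t)+t\lambda)=t^++p(t^+)/\lambda$ as $N\to\infty$. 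This avoids all the gluing, ergodicity, and $\tau$-integrability issues you flag.

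Your specific error: you assert that letting $\lambda(\nu)\to\lambda_m$ and $t_j\to t^+$ forces $s\to 0$. It does not. For fixed $\lambda$ strictly between $\lambda_m$ and $-D^-p(t^+)$, solving $\lambda=s\lambda(\nu)+(1-s)\lambda(\mu_{t_j})$ yields $s\to\dfrac{-D^-p(t^+)-\lambda}{-D^-p(t^+)-\lambda_m}\in(0,1)$, which is bounded away from zero. Your lower bound $(1-s)h(\mu_{t_j})/\lambda$ does in fact converge to $t^++p(t^+)/\lambda$, but only because $p(t^+)=-\lambda_m t^+$ (since $p(t)=-\lambda_m t$ for $t\ge t^+$), a fact you never invoke; your stated mechanism is wrong even if the arithmetic happens to close up. Moreover, once you restrict $\nu$ to finitely many branches to control $\int\tau\,d\mu$ --- as you correctly say you must --- you are already on a truncated scheme, and at that point taking an equilibrium state there (the paper's method) is both simpler and delivers ergodicity and the exact Lyapunov exponent without any block-concatenation apparatus.
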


The proof is obtained approximating $(I,f)$ by hyperbolic sets on which we have equilibrium states with relatively small Lyapunov exponent and large Hausdorff dimension.  The hyperbolic sets are invariant sets for truncated inducing schemes.  For the particular case of $\lambda_m(f)>0$ this covers the statement of Theorem~\ref{thm:main lyap} for $\lambda \in A$. For the particular case that $\lambda_m(f)=0$, since $t^+=1$ and $p_f(t^+)=0$, such a choice of $\lambda$ gives a measure with $\dim_H(\mu)\ge1$.

The same approximation argument together with the result of Barreira and Schmeling \cite{BarSc} allows us to prove that the irregular set $J'$ has full Hausdorff dimension.

\section*{Acknowledgements}

The authors would like to thank  H.\ Bruin, K. \ Gelfert, L.\ Olsen, I.\ Petrykiewicz and Juan Rivera-Letelier for his  useful comments.

\appendix

\section{Hofbauer extension}
\label{sec:Hof}

In this section we describe the Hofbauer extension, the construction of which underlies many of the results presented here. We will not explain our use of its properties in great detail for the sake of brevity.  The setup we present here can be applied to general dynamical systems, since it only uses the structure of dynamically defined cylinders.  An alternative way of thinking of the Hofbauer extension specifically for the case of multimodal interval maps, which explicitly makes use of the critical set, is presented in \cite{BruBru04}.

We let $\cyl_n[x]$ denote the member of $\P_n$, defined as above, containing $x$.  If $x\in \cup_{n\ge 0}f^{-n}(\crit)$ there may be more than one such interval, but this ambiguity will not cause us any problems here.

The \emph{Hofbauer extension} is defined as $$\hat
I:=\bigsqcup_{k\ge 0}\bigsqcup_{\cyl_{k}\in \P_{k}}
f^k(\cyl_{k})/\sim$$ where $f^k(\cyl_{k})\sim
f^{k'}(\cyl_{k'})$ as components of the disjoint union $\hat I$ if $f^k(\cyl_{k})= f^{k'}(\cyl_{k'})$ as subsets in $I$.  Let
$\D$ be the collection of domains of $\hat I$ and   $\hat\pi:\hat
I \to I$ be the natural inclusion map.  A point $\hat x\in \hat I$ can
be represented by $(x,D)$ where $\hat x\in D$ for $D\in \D$ and
$x=\hat\pi(\hat x)$.  Given $\hat x\in \hat I$, we can denote the domain $D\in \D$ it belongs to by $D_{\hat x}$.

The map $\hat f:\hat I \to \hat I$ is defined by
$$\hat f(\hat x) = \hat f(x,D) = (f(x), D')$$
if there are cylinder sets $\cyl_k \supset \cyl_{k+1}$ \st $x \in
f^k(\cyl_{k+1}) \subset f^k(\cyl_{k}) = D$ and $D' = f^{k+1}
(\cyl_{k+1})$.
In this case, we write $D \to D'$, giving $(\D, \to)$ the
structure of a directed graph.  Therefore, the map $\hat\pi$
acts as a semiconjugacy between $\hat f$ and $f$: $$\hat\pi\circ \hat
f=f\circ \hat\pi.$$
We denote the `base' of $\hat I$, the copy of $I$ in $\hat I$, by  $D_0$.  For $D\in \D$, we define $\level(D)$ to be the length of the shortest path $D_0 \to \dots \to D$ starting at the base $D_0$.  For each $R \in \N$, let $\hat I_R$ be the compact
part of the Hofbauer tower defined by
$$
\hat I_R := \sqcup \{ D \in \D : \level(D) \le R \}.$$

For maps in $\F$, we can say more about the graph structure of $(\D, \to)$ since Lemma 1 of \cite{BruTod09} implies that if $f\in \F$ then there is a closed primitive subgraph $\D_{\T}$ of $\D$.  That is, for any $D,D' \in\D_{\T}$ there is a path $D\to \cdots \to D'$; and for any $D\in \D_{\T}$, if there is a path $D\to D'$ then $D'\in \D_{\T}$ too.  We can denote the disjoint union of these domains by $\hat I_{\T}$.  The same lemma says that if $f\in \F$ then $\hat\pi(\hat I_{\T})=\Omega$, the non-wandering set and $\hat f$ is transitive on $\hat I_{\T}$.  Theorem~\ref{thm:cusp facts} below gives these properties for transitive cusp maps.

Given an ergodic measure $\mu\in \M_f$, we say that $\mu$ \emph{lifts to $\hat I$} if there exists an ergodic $\hat f$-invariant probability measure $\hat\mu$ on $\hat I$ such that $\hat\mu\circ\hat\pi^{-1}=\mu$.  For $f\in \F$, if $\mu\in \M_f$ is ergodic and $\lambda(\mu)>0$ then $\mu$ lifts to $\hat I$, see \cite{Kel89, BruKel98}.

Property $(\dag)$ is that for any $\hat x, \hat y\notin \bd\hat I$ with $\hat\pi(x)=\hat \pi(y)$ there exists $n$ such that $\hat f^n(\hat x)=\hat f^n(\hat y)$.  This property holds for cusp maps by the construction of $\hat I$ using the partition $\{I_i\}_i$ given in Definition~\ref{def:cusp}.

We only use the following result in the context of equilibrium states for cusp maps with no singularities.  However, for interest we state the theorem in greater generality.

\begin{teo}
Suppose that $f:I \to I$ is a transitive cusp map with topological entropy $h_{top}(f)>0$.  Then:
\begin{enumerate}
\item there is a transitive part $\hat I_{\T}$ of the tower such that $\hat\pi(\hat I_{\T})=I$;
\item any measure $\mu\in \M_f$ with $0<\lambda(\mu)<\infty$ lifts to $\hat\mu$ with $\mu=\hat\mu\circ \hat\pi^{-1}$;
\item for each $\eps>0$ there exists $\eta>0$ and a compact set $\hat K\subset \hat I_{\T}\sm \bd \hat I$ such that any measure $\mu\in \M_f$ with $h(\mu)>\eps$ and $0<\lambda(\mu)<\infty$ has $\hat\mu(\hat K)>\eta$.
\end{enumerate}
\label{thm:cusp facts}
\end{teo}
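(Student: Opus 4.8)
The plan is to establish the three items in order. Items (1) and (2) are the cusp-map translations of facts already recorded above for the class $\F$ --- respectively the primitive-subgraph statement following Lemma~1 of \cite{BruTod09}, and Keller's lifting theorem \cite{Kel89,BruKel98} --- while (3) is the cusp-map version of the compactness/tightness statement used, for $f\in\F$, in the proof of Theorem~\ref{thm:psi ind} (cf.\ \cite{IomTod10,BruTod09}). For (1) and (2) the only work is to check that the existing arguments survive the passage to cusp maps; the genuine difficulty is in (3), and it is localised near $\bd\hat I$.

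For (1): one builds $(\D,\to)$ from the branch partition $\{I_j\}_j$ of Definition~\ref{def:cusp} exactly as for a multimodal map, the extra ingredient being the identification property $(\dag)$ recorded above, which keeps the graph combinatorics of $\hat f$ under control off $\bd\hat I$. Since $h_{top}(f)>0$, the map has periodic orbits disjoint from the finite set $\crit$; such an orbit lifts to a loop in $(\D,\to)$ based at a domain $D_*$ of bounded level, and lifting segments of a dense $f$-orbit shows that from every domain of $\D$ there is a path to $D_*$. Hence the set $\D_\T$ of domains accessible from $D_*$ is a closed primitive subgraph, $\hat f$ is transitive on $\hat I_\T$, and $\hat\pi(\hat I_\T)$ is a non-empty closed $f$-invariant subset of $I$, so equals $I$ by transitivity.

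For (2): given ergodic $\mu$ with $0<\lambda(\mu)<\infty$, run Keller's construction in the form adapted to cusp maps in \cite{Dob08}. A Pliss-type lemma turns $\lambda(\mu)>0$ into a positive lower $\mu$-density of times $n$ at which $f^n$ is a diffeomorphism of bounded distortion from a definite neighbourhood of $x$ onto an interval of definite size; these are precisely first returns of $\hat f$ to a fixed compact part $\hat I_R$ with $\mu$-integrable return time, and the associated induced measure pushes up to the desired $\hat f$-invariant lift $\hat\mu$ with $\hat\mu\circ\hat\pi^{-1}=\mu$. The hypothesis $\lambda(\mu)<\infty$ is used only to tame the singular branches of the cusp map in these estimates (and is automatic in the no-singularity case which is the only one we use); since $\hat I_\T$ is absorbing and projects onto $I$, the lift is carried by $\hat I_\T$.

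For (3) --- the main obstacle --- I would argue by contradiction. Suppose there are ergodic $\mu_n$ with $h(\mu_n)>\eps$, $0<\lambda(\mu_n)<\infty$ and lifts $\hat\mu_n$ with $\hat\mu_n(\hat K)\to 0$ for every compact $\hat K\subset\hat I_\T\sm\bd\hat I$; taking $\hat K$ to be $\hat I_R\cap\hat I_\T$ with a $1/R$-neighbourhood of $\crit$ deleted, this failure forces the mass of $\hat\mu_n$ to escape either towards $\bd\hat I$ at bounded level or towards arbitrarily high levels. By a Ruelle-type inequality valid for cusp maps (\cite{Dob08}, using \cite{Prz93} at the non-flat critical points) we have $\lambda(\mu_n)\ge h(\mu_n)>\eps$; since $\log|Df|\to-\infty$ at the genuine critical points (the singular points being controlled by $\lambda(\mu_n)<\infty$, or absent in the case we use), the $\mu_n$ cannot charge small neighbourhoods of $\crit$, and an $\hat f$-invariant measure actually supported on $\bd\hat I$ would project to an atomic measure carried by the orbits of the finitely many points of $\crit$, hence have zero entropy; a compactness argument upgrades this to exclude concentration of $\hat\mu_n$ near $\bd\hat I$ at any fixed level. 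This leaves escape to high levels, which is the heart of the matter: one must show that the part of the tower above level $R$ produces asymptotically no entropy --- the tower has "zero entropy at infinity" --- which reflects the fact that following an edge of $(\D,\to)$ merely increments the level except at the finitely-many-per-level cutting domains, so long high-level excursions are entropy-free. Packaging this via Abramov's formula for the first return of $\hat f$ to $\hat I_R$, together with a counting bound on admissible excursions above level $R$ and the shrinking-neighbourhood expansion of cusp maps to control the derivatives of long excursions, one forces $h(\mu_n)\to 0$ for the escaping sequence, a contradiction. Making the excursion-counting estimate quantitative and carrying it past the degenerate distortion at $\bd\hat I$ is the crux; everything else follows the template of \cite{BruTod09,IomTod10}.
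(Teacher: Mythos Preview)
The paper does not actually prove this theorem: it is stated at the end of Appendix~\ref{sec:Hof} with no proof, immediately followed by the next section. The surrounding text records the analogous facts for $f\in\F$ with references (Lemma~1 of \cite{BruTod09} for the primitive subgraph, \cite{Kel89,BruKel98} for lifting), notes that the cusp-map extension is treated in \cite{Dob08} and \cite{IomTod11}, and then simply states the theorem ``for interest\ldots in greater generality''. So there is no paper proof to compare your attempt against.

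That said, your sketch follows exactly the route the cited literature takes, and the paper's own informal discussion points in the same direction. Your identification of the difficulties is accurate: (1) and (2) are bookkeeping extensions of the $\F$ arguments to the branch partition of a cusp map, with \cite{Dob08} supplying the distortion and expansion estimates near singular points; (3) is where the content lies, and the ``entropy escapes only to low levels'' dichotomy together with the combinatorial fact that long high-level excursions in $(\D,\to)$ carry no entropy is indeed the mechanism used in \cite{BruTod09,IomTod10} for $\F$. One caution: your contradiction argument in (3) tacitly assumes ergodicity of the $\mu_n$ when invoking the Ruelle inequality and the Pliss/lifting machinery, which is harmless by ergodic decomposition but should be said; and the step ``a compactness argument upgrades this to exclude concentration of $\hat\mu_n$ near $\bd\hat I$ at any fixed level'' hides real work in the cusp case, since near a singular endpoint $\log|Df|$ is unbounded above rather than below, so the exclusion there must come from $\lambda(\mu_n)<\infty$ (or the no-singularity hypothesis) rather than from the entropy lower bound.
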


\section{Projecting conformal measures}
\label{sec:appendix}

In Section~\ref{sec:ind} we outlined the relation between invariant measures for inducing schemes and for the original system.  Here we are concerned with proving analogous results for conformal measures.
We show how, given a suitable inducing scheme, the conformal measure for the induced system can be projected to the original system (and vice versa).  One way of doing this was described in \cite{Tod} where the structure of the Hofbauer extension was used.  Here we use a method inspired by that construction, but not directly appealing to it.  Note that one could also write the proofs here with notation from the Young tower.   The conformal measure so-constructed can by used to prove Theorem B via Proposition B.2 in \cite{IomTod11}.

\begin{lema}
A $\phi$-conformal measure $m_\phi$ for $(I,f)$ is also a $\Phi$-conformal measure for $(X,F)$ if $m_\phi(\cup_i X_i)=m_\phi(X)$.
\end{lema}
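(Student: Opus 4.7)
The plan is to reduce the conformality condition on $(X,F)$ to iterated applications of the conformality condition on $(I,f)$, exploiting that $F|_{X_i}=f^{\tau_i}$ is a diffeomorphism. First, fix a measurable set $A\subset X$ on which $F$ is injective, and decompose $A=\bigsqcup_i A_i$ where $A_i:=A\cap X_i$. The hypothesis $m_\phi\left(\bigcup_i X_i\right)=m_\phi(X)$ ensures $m_\phi(A\setminus\bigsqcup_i A_i)=0$, so this decomposition is valid up to $m_\phi$-null sets.

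The key step is to establish, for each $i$ and each $k=1,\ldots,\tau_i$, the identity
\begin{equation*}
m_\phi(f^k(A_i))=\int_{A_i} e^{-S_k\phi}\,dm_\phi,
\end{equation*}
by induction on $k$. For the inductive step, one needs $f$ to be injective on $f^{k-1}(A_i)$, which follows because $F|_{X_i}=f^{\tau_i}$ is a diffeomorphism onto $X$, forcing each intermediate iterate $f^{k-1}|_{X_i}$ to be injective, hence $f|_{f^{k-1}(X_i)}$ is injective. The inductive step itself uses the defining relation $dm_\phi(f(x))=e^{-\phi(x)}\,dm_\phi(x)$ to rewrite $\int_{f^{k-1}(A_i)} e^{-\phi}\,dm_\phi$ as $\int_{A_i} e^{-\phi\circ f^{k-1}}e^{-S_{k-1}\phi}\,dm_\phi=\int_{A_i} e^{-S_k\phi}\,dm_\phi$. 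At $k=\tau_i$ this yields $m_\phi(F(A_i))=\int_{A_i} e^{-\Phi}\,dm_\phi$.

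Finally, one sums over $i$. Since $F|_A$ is injective and the $A_i$ are pairwise disjoint, the images $F(A_i)$ are pairwise disjoint, so
\begin{equation*}
m_\phi(F(A))=\sum_i m_\phi(F(A_i))=\sum_i \int_{A_i} e^{-\Phi}\,dm_\phi=\int_A e^{-\Phi}\,dm_\phi,
\end{equation*}
which is exactly the $\Phi$-conformality of $m_\phi$ for $(X,F)$.

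The only mildly delicate points are the bookkeeping of injectivity at each iterate (which is automatic from the diffeomorphism assumption on the inducing branches) and the fact that the measure-zero hypothesis on $X\setminus\bigsqcup_i X_i$ is precisely what allows us to ignore points with $\tau(x)=\infty$; neither presents a real obstacle.
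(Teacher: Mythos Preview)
Your proof is correct and follows essentially the same approach as the paper. The paper compresses the argument into a single line in differential notation, writing $dm_\phi(F(x))=dm_\phi(f^{\tau_i}(x))=e^{-S_{\tau_i}\phi(x)}dm_\phi(x)=e^{-\Phi(x)}dm_\phi(x)$ for $x\in X_i$ and noting that the complement of $\cup_i X_i$ has zero measure; your induction on $k$ is exactly the rigorous unpacking of the middle equality, and your decomposition and summation over $i$ make explicit what the paper leaves implicit.
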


The lemma says that $m_\phi$ `lifts' to $(X,F)$. The proof is elementary since for $x\in X_i$,
$$dm_\phi(F(x))=dm_\phi(f^{\tau_i}(x))=e^{-S_{\tau_i}\phi(x)}dm_\phi(x)=e^{-\Phi(x)}dm_\phi(x),$$
and the complement of $\cup_i X_i$ has zero measure.
The point here is to prove the other direction: that  for certain inducing schemes $(X,F, \Phi)$, any $\Phi$-conformal measure, if it exists, projects to a $\phi$-conformal measure.  To do this we will work with a certain type of inducing schemes:

\textbf{Condition $(*)$:} An inducing scheme $(X,F, \tau)$ satisfies condition $(*)$ if for any $x\in (X,F)^\infty$, if $y, y'\in  (X,F)^\infty$ have $f^k(y)=f^{k'}(y')=x$ for $k, k'\in \N_0$, then there exists $n\in \N$ such that $k+n$ and $k'+n$ are inducing times for $y$ and $y'$ respectively.

It will turn out that this condition is satisfied by a natural class of inducing schemes, one way of obtaining which is the Hofbauer extension.  With this in mind, one can compare Condition $(*)$ to condition $(\dag)$ in Appendix~\ref{sec:Hof}.

\begin{teo}
Suppose that $f\in \F$ and $(X,F,\tau)$ is an inducing scheme satisfying condition $(*)$.  If $\phi:I\to[-\infty, \infty]$ has an induced version $\Phi:X\to [-\infty, \infty)$ with a $\Phi$-conformal probability measure $m_\Phi$, then $m_\Phi$ projects to a $\sigma$-finite $\phi$-conformal measure $m_\phi$, where $\mu_\phi\ll m_\phi$.

Moreover, $m_\phi$ is a finite measure if $\phi$ is bounded below: the measure is also finite if for each $x\in \{\phi=-\infty\}$ there exists $n\in \N_0$ such that $f^n(x)\in X$ and for any $y$ in neighbourhood of $x$,  $\phi(f^k(y))<\infty$ for $1\le k\le n-1$.
\label{thm:proj conf}
\end{teo}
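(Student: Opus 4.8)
The plan is to construct $m_\phi$ by \emph{unfolding} $m_\Phi$ along the orbit histories prescribed by the inducing scheme (equivalently, by pushing down a conformal measure from the floors of the associated Young tower). Up to an $m_\Phi$-null set, every $x\in(X,F)^\infty$ is of the form $f^j(y)$ with $y\in X_i$ and $0\le j<\tau_i$; along such a history the weight forced by conformality after $j$ steps is $e^{-S_j\phi(y)}$, so on the ``floor piece'' $f^j(X_i)$ we set $m_\phi$ equal to the push-forward $(f^j|_{X_i})_*\big(e^{-S_j\phi}\,m_\Phi|_{X_i}\big)$. The weight is finite $m_\Phi$-a.e., since $\{S_j\phi=-\infty\}\subset\{\Phi=-\infty\}$ and $m_\Phi(\{\Phi=-\infty\})=0$: otherwise $m_\Phi(F(B))=\int_B e^{-\Phi}\,dm_\Phi=\infty$ for $B=\{\Phi=-\infty\}\cap X_i$ of positive measure, contradicting that $m_\Phi$ is a probability. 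Each $f^j(X_i)$ is exhausted by the sets $f^j(\{S_j\phi\ge -N\}\cap X_i)$, of $m_\phi$-mass $\le e^N m_\Phi(X_i)$, and there are countably many floor pieces, so $m_\phi$ is $\sigma$-finite.

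The crux is that this prescription be consistent: a point $w$ with two histories $w=f^j(y)=f^{j'}(y')$, $y\in X_i$, $y'\in X_{i'}$, must receive the same local mass from each. This is exactly what condition $(*)$ provides. Applied with $k=j$, $k'=j'$, it yields $n\in\N$ for which $j+n$ and $j'+n$ are inducing times for $y$ and $y'$, so the histories synchronise: $F^m(y)=F^{m'}(y')=:v\in X$, with $v=f^n(w)$. Iterating the $\Phi$-conformality of $m_\Phi$ gives $dm_\Phi(y)=e^{S_{j+n}\phi(y)}\,dm_\Phi(v)$ and likewise for $y'$, whence, using the cocycle identity $S_{j+n}\phi(y)=S_j\phi(y)+S_n\phi(w)$, the density of the history $(y,j)$ near $w$ equals $e^{S_n\phi(w)}\,dm_\Phi(v)$, and the same for $(y',j')$. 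Hence a measurable choice of one history per point defines $m_\phi$ unambiguously. I expect this consistency step --- checking that the overlapping cylinder structure of an inducing scheme does not corrupt the construction --- to be the main obstacle, and condition $(*)$ is precisely the hypothesis that excludes the problematic overlaps (histories whose inducing-time sets never meet), just as condition $(\dag)$ does for the Hofbauer extension.

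Conformality of $m_\phi$ then follows by telescoping the weights. Let $A$ be Borel with $f|_A$ injective and $w\in A$ with history $w=f^j(y)$, $y\in X_i$. If $j+1<\tau_i$ then $f(w)=f^{j+1}(y)$ and $e^{-S_{j+1}\phi(y)}=e^{-\phi(w)}e^{-S_j\phi(y)}$; if $j+1=\tau_i$ then $f(w)=F(y)$ lies in the base and, combining $e^{-\Phi(y)}=e^{-\phi(w)}e^{-S_{\tau_i-1}\phi(y)}$ with $dm_\Phi(F(y))=e^{-\Phi(y)}\,dm_\Phi(y)$, the same identity holds. Since by the previous step $m_\phi$ near $f(w)$ does not depend on which history of $f(w)$ is used, this gives $dm_\phi(f(w))=e^{-\phi(w)}\,dm_\phi(w)$, i.e.\ $m_\phi(f(A))=\int_A e^{-\phi}\,dm_\phi$. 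For $\mu_\phi\ll m_\phi$: $\mu_\phi$ is the projection of its lift $\mu_\Phi$, obtained by the same unfolding with weight $1$ (normalised by $\int\tau\,d\mu_\Phi$), and $\mu_\Phi=h\,m_\Phi\ll m_\Phi$ by Theorem~\ref{thm:RPF mini}; as the weights $e^{-S_j\phi}$ are positive and finite $m_\Phi$-a.e.\ and $\mu_\Phi(\{\Phi=-\infty\})=0$, absolute continuity descends to the projections floor piece by floor piece.

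Finally, $m_\phi(I)\le\sum_i\sum_{j=0}^{\tau_i-1}\int_{X_i}e^{-S_j\phi}\,dm_\Phi$. If $\phi\ge -M$ then $e^{-S_j\phi}\le e^{Mj}$ and summing gives $m_\phi(I)<\infty$ (the summability $\sum_i\tau_i e^{M\tau_i}m_\Phi(X_i)<\infty$ being available from the Gibbs property of $m_\Phi$ together with Theorem~\ref{thm:psi ind}(b) in the situations where the result is applied). When $\phi$ may equal $-\infty$, the extra hypothesis forces every history meeting $\{\phi=-\infty\}$ to return to $X$ within boundedly many steps without first meeting $\{\phi=+\infty\}$, so the weight has only an integrable singularity there, absorbed via the $\Phi$-conformality of $m_\Phi$ exactly as in the synchronisation step, and again $m_\phi(I)<\infty$.
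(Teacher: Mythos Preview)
Your construction of $m_\phi$ by unfolding $m_\Phi$ along the floor pieces $f^j(X_i)$, and your use of condition~$(*)$ to check that overlapping histories assign the same local mass, is exactly the paper's approach; the conformality check by telescoping is likewise the same.

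The genuine gap is in your finiteness argument when $\phi$ is bounded below. You bound $m_\phi(I)$ by $\sum_i\sum_{j=0}^{\tau_i-1}\int_{X_i}e^{-S_j\phi}\,dm_\Phi$ and then need the exponential tail estimate $\sum_i\tau_i e^{M\tau_i}m_\Phi(X_i)<\infty$. This is \emph{not} a hypothesis of the theorem; you yourself concede the point by invoking Theorem~\ref{thm:psi ind}(b), which concerns only particular potentials and particular schemes. As stated, the theorem is about an arbitrary bounded-below $\phi$ and an arbitrary scheme satisfying $(*)$, and for such data your sum can diverge. The paper avoids this entirely by exploiting a topological property of $f\in\F$ that you have not used: for any open set $U\subset I$ there exists $n$ with $f^n(U)=I$. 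Taking $U=X$ and using the conformality of the already-constructed $\nu_\phi$ together with $\nu_\phi(X)=m_\Phi(X)=1$ gives
\[
\nu_\phi(I)=\nu_\phi(f^n(X))\le\int_X e^{-S_n\phi}\,d\nu_\phi\le e^{-n\inf\phi}<\infty,
\]
with no tail assumption whatsoever. The same device handles the case $\phi(x)=-\infty$: the extra hypothesis produces a neighbourhood $U\ni x$ with $f^n(U)\subset X$ and $\sup_U S_n\phi<\infty$, and then $1\ge\nu_\phi(f^n(U))=\int_U e^{-S_n\phi}\,d\nu_\phi\ge e^{-\sup_U S_n\phi}\nu_\phi(U)$ bounds $\nu_\phi(U)$. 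Your treatment of this second case (``integrable singularity\ldots absorbed via the $\Phi$-conformality'') is too vague to count as a proof; the paper's argument is a two-line computation once you have the right idea.
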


Note that we allow $\Phi$ to be $-\infty$, but not $\infty$: indeed for each $i$, and $x\in X_i$ we allow $S_j\phi(x)=-\infty$ for any $1\le j\le \tau_i-1$, but not $S_j\phi(x)=+\infty$.

\begin{proof}
First note that $\Phi<\infty$ implies that for any set $A\subset X$, $m_\Phi(A)>0$ implies $m_\Phi(F(A))>0$.  This means that  no set of positive measure can leave $\cup_iX_i$ under iteration of $F$, i.e.,  for all $k\in \N$, $m_\Phi(F^{-k}(\cup_iX_i))=1$ and thus $m_\Phi(X\cap (X,F)^\infty)=1$.
We will spread the measure $m_\Phi$ onto $(X,F)^\infty$  as follows.

Suppose that $x\in (X,F)^\infty$  is contained in some set $f^k(X_i)$ for $0\le k\le \tau_i-1$.  There may be many such pairs $(i, k)$, but we pick one arbitrarily and then later show that we could have chosen any and obtained the same result.
There exists a unique $y\in X_i$ such that $f^k(y)=x$.  We define $\nu_\phi$ so that for any $j\in \N_0$,
 $$d\nu_\phi(f^j(x))=e^{-S_{k+j}(y)}dm_\Phi(y).$$
Clearly this gives a conformal measure locally since for $j\in \N$,
$$d\nu_\phi(f^j(x))=e^{-S_{k+j}\phi(y)}dm_\Phi(y)= e^{-S_j\phi(x)}e^{-S_k\phi(y)}dm_\Phi(y)=e^{-S_j\phi(x)}d\nu_\phi(x).$$
Note that if $k+j=\tau_i$ then $f^{k+j}(x)=F(y)$ and
$$d\nu_\phi(F(y))=d\nu_\phi(f^{k+j}(x))=e^{-S_{k+j}\phi(y)}dm_\Phi(y)=e^{-\Phi(y)}dm_\Phi(y)=dm_\Phi(F(y)),$$
by the $\Phi$-conformality of $m_\Phi$.  This also extends to the case when $k+j=\tau^p(y)$ for $p\in \N$, we obtain
\begin{equation}
d\nu_\phi(f^{j}(x))=d\nu_\phi(f^{k+j}(y))=e^{-S_{k+j}\phi(y)}dm_\Phi(y)=e^{-S_p\Phi(y)}dm_\Phi(y)=dm_\Phi(F^p(y)).
\label{eq:nu and Phi}
\end{equation}

To prove that the procedure given above is well-defined, we need to check that the same measure is assigned at $x$ when there is  $i'\neq i$ and $1\le k'\le \tau_{i'}-1$ such that $x$ is also contained in $f^{i'}(X_{i'})$ and $\tau_{i'}-k'=\tau_i-k$.  If we let $\nu_\phi'$ be the measure at $x$ obtained analogously to $\nu_\phi$ but with $i'$ and $k'$ in place of $i$ and $k$, and some point $y'\in X_{i'}$ in place of $y\in X_i$, we must show that $\nu_\phi'=\nu_\phi$.

By condition $(*)$ there exists $n$ such that $k+n$ and $k'+n$ are inducing times for $y$ and $y'$ respectively.  Therefore, as in \eqref{eq:nu and Phi},
\begin{align*}
d\nu_\phi'(x)&=e^{-S_n\phi(x)}d\nu_\phi'(f^n(x))=e^{-S_n\phi(x)}dm_\Phi(f^n(x))\\
&=e^{-S_n\phi(x)}d\nu_\phi(f^n(x))=d\nu_\phi(x),\end{align*}
so $\nu_\phi'=\nu_\phi$, as required.

For $\sigma$-finiteness of $\nu_\phi$, notice that for any $x\in \cup_{i\ge 1}\cup_{k=0}^{\tau_i-1}f^k(X_i)$, we can choose a single element $f^k(X_i)$ as above containing $x$ to apportion measure at $x$.  Since the resulting measure is finite, $\nu_\phi$ is $\sigma$-finite.

To prove that $\nu_\phi$ is actually finite when $\phi>-\infty$, we use a topological property of $f$: that for any open set $U\subset I$, there exists $n\in \N$ such that $I=f^n(U)$.  We take $U=X$, and note that $\nu_\phi(X)=m_\Phi(X)=1$. Then by the conformality of $\nu_\phi$ and the fact that $\phi$ is bounded below, we have
$$\nu_\phi(I)=\nu_\phi(f^n(X))\le \int_Xe^{-S_n\phi}~d\nu_\phi \le e^{-n\inf\phi}m_\Phi(X)<\infty.$$  The fact that $\nu_\phi\ll m_\phi$ follows from construction.

By this argument, the only possible obstacle to $\nu_\phi$ being finite in the general case is if there is a point $x\in I$ such that $\phi(x)=-\infty$: in this case, any neighbourhood of $x$ may have infinite measure.  We consider this possibility as in the final part of the statement of the theorem: we take $x\in \{\phi=-\infty\}$, and  $n\in \N_0$ such that $f^n(x)\in X$ and $\phi(f^k(x))<\infty$ for $1\le k\le n-1$.  Then there exists an interval $U\ni x$ such that $f^n(U)\subset X$ and $S_k\phi(y)<\infty$ for all $y\in U$.  It suffices to prove that $\nu_\phi(U)<\infty$.  This follows since
$$1=\nu_\phi(X)\ge \nu_\phi(U)=\int_Ue^{-S_n\phi}~d\nu_\phi\ge e^{-\sup_{y\in U} S_n\phi(y)}\nu_\phi(U).$$

We finish by setting $m_\phi=\nu_\phi/\nu_\phi(I)$.
\end{proof}

One of the main applications of this theorem is to maps $f\in\F$ with potential $-t\log|Df|$.  In particular we are interested in conformal measures for the potential $\psi_t:x\mapsto -t\log|Df(x)|-P(-t\log|Df|)$.  By Theorem~\ref{thm:proj conf}, in order to obtain a $\psi_t$-conformal measure it is sufficient
to find an inducing scheme $(X,F, \tau)$ satisfying condition $(*)$ with a $\Psi_t$-conformal measure $m_{\Psi_t}$ (where $\Psi_t=-t\log|DF|-\tau P(-t\log|DF|)$).  The existence of such schemes is studied in \cite{BruKel98}, \cite{PesSen08}, \cite{BruTod09} and \cite{IomTod10}.  We address this in the next section.

\subsection{Decent returns and natural conformal measures}

In this section we introduce a useful kind of inducing scheme, which in particular satisfies condition $(*)$.

Suppose that $A\subset I$ is an interval and $\delta>0$.  We define $(1+2\delta)A$ to be the interval concentric with $A$, but with length $(1+2\delta)|A|$ (we will always implicitly assume that $(1+2\delta)A$ is still contained in $I$).  For a set $B\subset I$, we say that $A$ is \emph{$\delta$-well inside $B$} if $(1+2\delta)A\subset B$.

The following is simply an application of the Koebe lemma, see \cite[Chapter IV]{MelStr93}, which applies in our case since $f\in \F$ has negative Schwarzian derivative.

\begin{lema}
Suppose that $f\in \F$ and $A\subset I$ is such that $f^n:(1+2\delta)A\to f^n((1+2\delta)A)$ is a diffeomorphism.  Then for all $x,y\in A$,
$$ \frac{\delta^2}{1+\delta^2}\le \frac{Df^n(x)}{Df^n(y)} \le \frac{1+\delta^2}{\delta^2}.$$
\label{lem:koebe}
\end{lema}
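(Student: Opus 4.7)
The plan is to derive the lemma as an essentially immediate consequence of the classical Koebe Principle for $C^3$ maps with negative Schwarzian derivative, as stated in \cite[Chapter IV]{MelStr93}. The argument has two short ingredients, and the only work is in translating a standard Koebe-type estimate into the exact form appearing in the statement.

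First I would verify that $f^n$ itself has negative Schwarzian derivative on the interval $(1+2\delta)A$. This follows from the chain rule for the Schwarzian operator,
$$S(g\circ h)(x) = S(g)(h(x))\cdot (Dh(x))^{2} + S(h)(x),$$
combined with hypothesis (b) in the definition of $\F$, namely $S(f)<0$. An easy induction on $n$ shows that $S(f^n)<0$ on any interval where $f^n$ is a $C^3$ diffeomorphism, and by assumption $f^n$ is a diffeomorphism on $(1+2\delta)A$, so this condition is satisfied.

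Second I would apply the Koebe Principle to the map $g=f^n$ on the interval $U=(1+2\delta)A$. By the very definition of $\delta$-well-insideness, the two components of $U\setminus A$ each have length $\delta|A|$, so $A$ sits inside $U$ with definite space on each side. The Koebe Principle then gives a bound
$$\sup_{x,y\in A}\frac{Df^n(x)}{Df^n(y)} \le K(\delta)$$
by a constant depending only on $\delta$, and the sharp cross-ratio estimates available under negative Schwarzian pin this constant down to the value $(1+\delta^2)/\delta^2$ stated in the lemma; the reciprocal inequality is then automatic by swapping the roles of $x$ and $y$. There is no genuine obstacle here; the only piece of bookkeeping is to pass from the usual formulation of Koebe (often phrased in terms of length ratios $|f^n(J)|/|J|$ on subintervals $J\subset A$) to the pointwise derivative ratio form in the statement, which is immediate from the mean value theorem applied on nested subintervals shrinking to the relevant points.
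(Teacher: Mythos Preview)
Your proposal is correct and takes essentially the same approach as the paper: the paper does not give a proof but simply remarks that the lemma ``is simply an application of the Koebe lemma, see \cite[Chapter IV]{MelStr93}, which applies in our case since $f\in \F$ has negative Schwarzian derivative.'' Your write-up adds the (standard) observation that negative Schwarzian is inherited by iterates via the chain rule, and then invokes the same reference, so there is no substantive difference.
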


Given $f\in \F$, an interval $X\in I$ and $\delta>0$, we say that a point $x\in X$ makes a \emph{ $\delta$-decent return to $X$ at time $n\in \N$} if there exists an interval $A'(\delta, x)$ containing $x$ such that $f^n:A'(\delta, x)\to (1+2\delta)X$ is a diffeomorphism.  Let $A(\delta, x)\subset A'(\delta, x)$ be such that $f^n:A(\delta, x)\to X$ is a diffeomorphism.   The smallest such $n\in \N$ is called the  \emph{first $\delta$-decent return to $X$}.

The idea is that this will give us a very natural inducing scheme.  To ensure this, we need one more idea: we say that our interval $X$ is \emph{$\delta$-nice} for any $x, y\in X$, either  $A(\delta, x)= A(\delta, y)$ or  $A(\delta, x)$ and $A(\delta, y)$ intersect in at most one point.  One way to ensure this is to use the usual notion of `nice interval': an interval $X$ is \emph{nice} if $f^n(\bd X)\cap int(A)=\es$ for all $n\in \N$.  This in turn can, for example, be guaranteed if we let $p$ be a periodic point and let $X$ be an open interval bounded by points in the orbit of $p$, but not containing any points of that orbit.  By Lemma~\ref{lem:koebe} below, this process produces a `decent' inducing scheme.

It is proved in \cite{Bru95} that decent inducing schemes can be produced by the Hofbauer extension for $f$, and thus that such an inducing scheme $(X,F,\tau)$ satisfies condition $(*)$.

\begin{teo}
Given $f\in \F$, for each $t<t^+$ there exists an inducing scheme $(X,F,\tau)$ satisfying condition $(*)$ and a $\Psi_t$-conformal measure $m_{\Psi_t}$.
\label{thm:nat sch pot}
\end{teo}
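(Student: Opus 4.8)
The plan is to combine the geometric construction of a "decent" inducing scheme, sketched in the preceding subsection, with the thermodynamic machinery for countable Markov shifts from Section~\ref{sec:CMS}. First I would fix $t<t^+$ and produce the inducing scheme itself. Let $p$ be a repelling periodic point whose orbit is dense enough (existence follows from transitivity of $f\in\F$), and let $X$ be an open interval bounded by two points of $\orb(p)$ but containing no point of $\orb(p)$; this $X$ is nice, hence $\delta$-nice for a suitable small $\delta>0$. Define $F$ by first $\delta$-decent returns to $X$: for each component $X_i$ of the domain, $F|_{X_i}=f^{\tau_i}$ maps $X_i$ diffeomorphically onto $X$ and extends to a diffeomorphism onto $(1+2\delta)X$. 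By Lemma~\ref{lem:koebe} this $F$ has the required bounded distortion and hyperbolicity, so $(X,F,\tau)$ is a genuine inducing scheme; by the result of \cite{Bru95} invoked above, decent inducing schemes arise from the Hofbauer extension, which gives condition $(*)$. This disposes of everything except the existence of the conformal measure.

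Next I would produce the $\Psi_t$-conformal measure on the induced (countable full-shift) system. Passing through the coding $\pi$ of Section~\ref{sec:ind}, $\Psi_t=-t\log|DF|-\tau p_f(t)$ becomes a weakly H\"older potential on $\Sigma$ (the hyperbolicity and distortion of the scheme give the weak H\"older property, as noted in the Coding subsection). The key input is Theorem~\ref{thm:psi ind}: since $f\in\F_T$ (or can be reduced to that case) and $t<t^+$, there is an inducing scheme — which I would take to be, or refine to, the decent scheme above — with $P(\Psi_t)=0$ and $\sum_i\tau_i e^{\Psi_{i,t}}<\infty$. In particular $P(\Psi_t)=0<\infty$, so Theorem~\ref{thm:RPF mini} applies: there is a conservative conformal measure $m_{\Psi_t}$ with $L_{\Psi_t}^*m_{\Psi_t}=m_{\Psi_t}$ (the eigenvalue is $e^{P(\Psi_t)}=1$), i.e.\ $m_{\Psi_t}$ is exactly a $\Psi_t$-conformal measure for $(X,F)$. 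Translating back through $\pi^{-1}$ gives the conformal measure on the interval scheme.

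The one point that needs care — and which I expect to be the main obstacle — is the compatibility of the two constructions: Theorem~\ref{thm:psi ind} asserts existence of \emph{some} scheme with the pressure and summability properties, while here I want these properties for the \emph{specific} decent scheme satisfying $(*)$. This is handled exactly as in step (3) of the sketch of Theorem~\ref{thm:psi ind}: an argument on the Hofbauer tower shows that if $\mu_{\psi_t}$ lifts to the decent scheme (which it does, since $\lambda(\mu_{\psi_t})>0$ by $t<t^+$ and Theorem~\ref{thm:schemes}), then one may transfer the good properties from the abstract scheme of Theorem~\ref{thm:psi ind} to it — concretely, $P(\Psi_t)\le 0$ is automatic from the Variational Principle and Abramov's formula (Section~\ref{sec:ind}), while $P(\Psi_t)\ge 0$ and $\sum_i\tau_ie^{\Psi_{i,t}}<\infty$ follow because the lifted equilibrium state $\nu_{\Psi_t}$ charges every branch of the decent scheme, forcing finiteness of $\int\tau\,d\nu_{\Psi_t}$ via Kac's lemma. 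Given $P(\Psi_t)=0$, Theorem~\ref{thm:RPF mini} finishes the argument, producing $m_{\Psi_t}$, and condition $(*)$ then lets Theorem~\ref{thm:proj conf} project it down — though that last projection is the content of the surrounding discussion rather than of this theorem.
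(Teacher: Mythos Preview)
Your proposal is correct and follows the same approach the paper indicates: the paper's own proof is a single sentence pointing to the combination of the main theorem of \cite{IomTod10} (the content of Theorem~\ref{thm:psi ind} here) with the decent inducing schemes of \cite{Bru95}, and you have faithfully expanded that combination, including correctly isolating the one genuine technical point --- transferring $P(\Psi_t)=0$ from the abstract scheme of Theorem~\ref{thm:psi ind} to the specific decent scheme satisfying $(*)$ --- and resolving it via the Hofbauer-tower argument from step~(3) of the sketch. The only loose end is your parenthetical ``or can be reduced to that case'' for $\F$ versus $\F_T$: Theorem~\ref{thm:psi ind} is stated for $\F_T$, while the present theorem is for $\F$; the paper sidesteps this by citing \cite{IomTod10} directly, whose main theorem covers all of $\F$ for $t<t^+$, so you should cite that rather than rely on a reduction.
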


This is essentially proved putting together the main theorem of  \cite{IomTod10} and the decent inducing schemes produced in  \cite{Bru95}. 

An immediate corollary of Theorems~\ref{thm:proj conf} and \ref{thm:nat sch pot} is:

\begin{coro}
Given $f\in \F$, for each $t<t^+$ there exists a $\psi_t$-conformal measure $m_{\psi_t}$.
\end{coro}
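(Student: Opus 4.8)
The final statement is the Corollary asserting that for $f\in \F$ and $t<t^+$ there exists a $\psi_t$-conformal measure $m_{\psi_t}$. The plan is to derive this immediately by combining the two results that precede it: Theorem~\ref{thm:nat sch pot}, which produces an inducing scheme $(X,F,\tau)$ satisfying condition $(*)$ together with a $\Psi_t$-conformal measure $m_{\Psi_t}$, and Theorem~\ref{thm:proj conf}, which says that such a $\Phi$-conformal measure for an inducing scheme satisfying $(*)$ projects to a $\sigma$-finite $\phi$-conformal measure on $(I,f)$. Here one takes $\phi=\psi_t=-t\log|Df|-P(-t\log|Df|)$ and $\Phi=\Psi_t=-t\log|DF|-\tau P(-t\log|Df|)$, noting that $\Psi_t$ is exactly the induced version of $\psi_t$ in the sense of the definition in Section~\ref{sec:ind}, since $S_{\tau_i}\psi_t(x) = -t\log|Df^{\tau_i}(x)| - \tau_i P(-t\log|Df|) = -t\log|DF(x)| - \tau(x)P(-t\log|Df|)$ on $X_i$.

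First I would invoke Theorem~\ref{thm:nat sch pot} to fix, for the given $t<t^+$, an inducing scheme $(X,F,\tau)$ satisfying condition $(*)$ and carrying a $\Psi_t$-conformal probability measure $m_{\Psi_t}$. Next I would verify the hypotheses of Theorem~\ref{thm:proj conf}: $f\in\F$ is given; $(X,F,\tau)$ satisfies $(*)$; and $\psi_t:I\to[-\infty,\infty]$ has induced version $\Phi=\Psi_t:X\to[-\infty,\infty)$ — indeed $\Psi_t$ is finite on each $X_i$ because $|Df|$ is bounded and $f\in\F$ has non-flat critical points, so $\log|DF|$ is finite (this is built into the hyperbolicity of the inducing scheme), and $\psi_t>-\infty$ away from $\Crit$. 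Applying Theorem~\ref{thm:proj conf} then yields a $\sigma$-finite $\psi_t$-conformal measure $m_{\psi_t}$ on $(I,f)$ with $\mu_{\psi_t}\ll m_{\psi_t}$.

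The remaining point is finiteness of $m_{\psi_t}$: Theorem~\ref{thm:proj conf} gives finiteness when $\phi$ is bounded below, and more generally under the stated condition controlling points where $\phi=-\infty$. The set $\{\psi_t=-\infty\}$ is nonempty precisely when $t>0$, where it equals $\Crit(f)$. For $c\in\Crit$, by property (d) in the definition of $\F$ the forward orbit of $c$ is disjoint from $\Crit$, and one can choose the nice interval $X$ (bounded by a periodic orbit, as in the construction of decent inducing schemes) to avoid a neighbourhood of $\Crit$, so there is $n\in\N_0$ with $f^n(c)\in X$ and a neighbourhood $U\ni c$ on which $\psi_t(f^k(\cdot))<\infty$ for $1\le k\le n-1$ — the only bad iterate being the $0$-th. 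This places us in exactly the situation covered by the final clause of Theorem~\ref{thm:proj conf}, giving $m_{\psi_t}(I)<\infty$, after which we normalise. When $t\le 0$, $\psi_t$ is bounded below and the simpler clause applies directly.

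I do not expect a genuine obstacle here, since the Corollary is explicitly flagged in the text as an immediate corollary of Theorems~\ref{thm:proj conf} and \ref{thm:nat sch pot}; the only care needed is the bookkeeping described above — checking that $\Psi_t$ really is the induced potential of $\psi_t$ and that the finiteness clause of Theorem~\ref{thm:proj conf} applies at critical points via hypothesis (d) of $\F$. If one wanted a uniqueness statement one would additionally appeal to the Ruelle–Perron–Frobenius theorem (Theorem~\ref{thm:RPF mini}) for the induced system together with $P(\Psi_t)=0$ from Theorem~\ref{thm:psi ind}, as in the proof of the Corollary in Section~\ref{sec:app ind}, but for the bare existence statement the two-line combination above suffices.
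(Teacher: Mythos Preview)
Your proposal is correct and follows exactly the route the paper takes: the corollary is stated there as an immediate consequence of Theorems~\ref{thm:proj conf} and \ref{thm:nat sch pot}, and you simply unpack that combination, checking that $\Psi_t$ is the induced version of $\psi_t$ and that the finiteness clause of Theorem~\ref{thm:proj conf} applies. The only step you add beyond the paper's one-line justification is the finiteness verification at critical points; your appeal to hypothesis~(d) of $\F$ and the flexibility in choosing the nice interval $X$ is the right idea, though the assertion that some $f^n(c)$ lands in $X$ deserves a word more of justification (it is not a consequence of $X$ avoiding $\crit$ alone, but follows from transitivity together with the freedom in selecting the inducing domain).
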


On can also apply this result to any H\"older potential $\phi:I\to \R$ satisfying $\sup \phi<P(\phi)$, see
\cite{BruTod08} and \cite{IomTod10}.  However, such conformal measures were already known, see  \cite{Kel85}.  Actually, it has been recently shown by Li and Rivera-Letelier \cite{lr} that for maps $f \in \F$ such that for every critical value $c$ we have $\lim_{n \to \infty} |Df^n(f(c))| = \infty$ every H\"older potential $\phi$ satisfies  $\frac{1}{n} \sup S_n \phi<P(\phi)$ and that this is enough to guarantee the existence and uniqueness of equilibrium states, conformal measures and real analyticity of the pressure function. This remarkable result allows for the improvement of Theorem B in \cite{IomTod11}.

\section{Equality of pointwise and Markov dimensions}  \label{sec:Markdim}

In \cite{OlsPet12}, work was done to explain a case of \cite[Proposition 3]{PolWe} which relates pointwise dimension and Markov dimension for a so-called EMR transformation $(I_0, T)$: for a precise definition of an EMR see \cite{PolWe}, but for our purposes here, just suppose that it satisfies all the properties of an inducing scheme described in Section~\ref{sec:ind}, but with $\tau\equiv 1$.  The key issue is the part where they show, under the assumption that the Markov dimension $\delta_\mu(x)$ exists at $x$, that $\underline d_\mu(x)  \ge \delta_\mu(x)$.
In this section we alter the definition of an EMR transformation in order to obtain this result.

For $I_0\subset I$, the EMR is a map $T:I_0'\to I_0$ where $I_0':=\cup_{i\in \N} I_{1,i}$ and for each $i\in \N$,  $T:I_{1,i}\to I_0$ is a diffeomorphism, and $\inf\{|DT(x)|:x\in I_0'\}>1$.  For $x\in I_0'$, if $T^n(x)$ is defined, let $I_n(x)$ denote the $n$-cylinder at $x$; recall that this is the maximal set around $x$ such that $T^n:I_n(x)\to I_0$ is a diffeomorphism.  We can list these intervals as $\{I_{n,1}, I_{n,2}, \ldots\}$. Let $I_0''$ be the set of points for which $T^n$ is defined for all $n\in \N_0$, i.e., $I_0''=\{x\in I_0':T^n(x)\in I_0'\text{ for all } n\ge1\}$.

Given that we have a Markov structure for our system,  and thus a natural partition into 1-cylinders, we can define the Markov dimension as usual for points in  $I_0''$.

By definition of an EMR, or an inducing scheme, there exists $K_1\ge1$ such that
\begin{equation}
\frac1{K_1}\le \frac{|I_n(x)|}{|DT^n(x)|^{-1}}\le K_1.\label{eq:cyl dis}
\end{equation}

Now recall that if $\mu$ is an equilibrium state (a Gibbs measure) for $\Phi$ then there exists $C\ge 1$ such that
\begin{equation}
\frac1C\le \frac{\mu(I_n(x))}{e^{-nP(\Phi)+\sum_{j=0}^{n-1}\Phi(T^j(y))}}\le C,
\label{eq:Gibbs}
\end{equation}
for all $y\in I_n(x)$.
Let $\overline\Phi(x)$ denote the Birkhoff average at $x$, if it exists.

The following is essentially proved in \cite{PolWe}, but with some minor errors, so we give the proof again here.

\begin{lema}
Suppose that for $x\in I_0''$, $\delta_\mu(x)$ and $\overline\Phi(x)$ exist.
Then $\overline d_\mu(x)\le \delta_\mu(x)$.
\label{lem:PW1}
\end{lema}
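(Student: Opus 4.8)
The plan is to bound $\mu(B(x,r))$ from below by the $\mu$-measure of a single cylinder contained in the ball, and then to identify the exponent that comes out with $\delta_\mu(x)$. Fix a small $r>0$ and let $k=k(r)$ be the least integer with $|I_k(x)|<r$. Since $\inf_{I_0'}|DT|>1$ together with \eqref{eq:cyl dis} forces $|I_n(x)|\to 0$, this $k$ is well defined once $r$ is small, $k(r)\to\infty$ as $r\to0$, and $|I_k(x)|<r\le|I_{k-1}(x)|$. Because $x\in I_k(x)$ and $\diam I_k(x)=|I_k(x)|<r$, every $y\in I_k(x)$ satisfies $|y-x|<r$; hence $I_k(x)\subset B(x,r)$ and $\mu(B(x,r))\ge\mu(I_k(x))$.

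Next I would carry out the elementary sign-chasing. For $r$ small enough that $\mu(I_k(x))<1$, divide $\log\mu(B(x,r))\ge\log\mu(I_k(x))$ by $\log r<0$, which reverses the inequality; then, since $\log r\le\log|I_{k-1}(x)|<0$ gives $1/\log r\ge 1/\log|I_{k-1}(x)|$ and multiplication by the negative number $\log\mu(I_k(x))$ reverses the inequality once more, obtain
\[
\frac{\log\mu(B(x,r))}{\log r}\ \le\ \frac{\log\mu(I_k(x))}{\log|I_{k-1}(x)|}\ =\ \frac{\log\mu(I_k(x))}{\log|I_k(x)|}\cdot\frac{\log|I_k(x)|}{\log|I_{k-1}(x)|}.
\]
As $r\to0$ we have $k(r)\to\infty$, so the first factor tends to $\delta_\mu(x)$ by hypothesis, and it remains only to show the second factor tends to $1$: granting this, taking $\limsup_{r\to0}$ yields $\overline d_\mu(x)\le\delta_\mu(x)$.

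The main obstacle is exactly the claim $\log|I_k(x)|/\log|I_{k-1}(x)|\to1$. This is false for a general EMR transformation --- one can choose $x$ so that $|DT|$ grows along its orbit fast enough that $\log|DT(T^{k-1}(x))|$ is comparable to the whole sum $\sum_{j<k-1}\log|DT(T^j(x))|$, and then $\overline d_\mu(x)$ genuinely exceeds $\delta_\mu(x)$ (in such an example $\overline\Phi(x)$ fails to exist finitely) --- which is why the definition of an EMR transformation is strengthened in this section, and it is where the hypothesis that $\overline\Phi(x)$ exists enters. Observing that, by \eqref{eq:cyl dis}, the claim is equivalent to $\tfrac1n\log|I_n(x)|$ converging to a negative limit (for then $\log|I_k(x)|/\log|I_{k-1}(x)|=\big(\tfrac1k\log|I_k(x)|\big)/\big(\tfrac1{k-1}\log|I_{k-1}(x)|\big)\cdot\tfrac{k-1}{k}\to1$), I would deduce this convergence as follows: the Gibbs property \eqref{eq:Gibbs} and the existence of $\overline\Phi(x)$ give $\log\mu(I_n(x))=n\big(\overline\Phi(x)-P(\Phi)\big)+o(n)$, and feeding this together with $\log|I_n(x)|=-\log|DT^n(x)|+O(1)$ into the existence of $\delta_\mu(x)=\lim_n\log\mu(I_n(x))/\log|I_n(x)|$ forces $\tfrac1n\log|DT^n(x)|$ to converge whenever $\delta_\mu(x)\ne0$; the degenerate case $\delta_\mu(x)=0$ is handled by a direct estimate on the displayed ratio, using that $\lvert\log|I_{k-1}(x)|\rvert$ grows at least linearly. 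This completes the plan.
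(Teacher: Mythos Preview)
Your argument is correct and structurally parallel to the paper's, but the two proofs differ in how they factor the comparison ratio. Both begin by trapping $B(x,r)$ between consecutive cylinders and writing
\[
\frac{\log\mu(B(x,r))}{\log r}\ \le\ \frac{\log\mu(I_k(x))}{\log|I_{k-1}(x)|}.
\]
At this point you factor as $\dfrac{\log\mu(I_k)}{\log|I_k|}\cdot\dfrac{\log|I_k|}{\log|I_{k-1}|}$ and are then forced to show that consecutive \emph{cylinder sizes} have log-ratio tending to $1$; this is not automatic, and you recover it by combining the Gibbs estimate with the existence of $\delta_\mu(x)$ to deduce that the pointwise Lyapunov exponent $\tfrac1n\log|DT^n(x)|$ converges (when $\delta_\mu(x)\ne 0$), plus a separate treatment of $\delta_\mu(x)=0$. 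The paper instead factors as $\dfrac{\log\mu(I_{n-1})}{\log|I_{n-1}|}\cdot\dfrac{\log\mu(I_n)}{\log\mu(I_{n-1})}$, so the correction factor is a ratio of log-\emph{measures}; by the Gibbs property \eqref{eq:Gibbs} this is, up to bounded constants, $\dfrac{S_n\Phi(x)-nP(\Phi)}{S_{n-1}\Phi(x)-(n-1)P(\Phi)}$, which tends to $1$ directly from the existence of $\overline\Phi(x)$, with no appeal to $\delta_\mu(x)$ or to the Lyapunov exponent. The paper's route is therefore shorter and avoids the case split. One small imprecision in your sketch: in the degenerate case $\delta_\mu(x)=0$, the remark that $|\log|I_{k-1}(x)||$ grows ``at least linearly'' suffices only when $\overline\Phi(x)=P(\Phi)$ (so the numerator is $o(k)$); if $\overline\Phi(x)<P(\Phi)$ and $\delta_\mu(x)=0$ you actually need that $|\log|I_{k-1}(x)||/k\to\infty$, which does follow from $\delta_\mu(x)=0$ but should be said.
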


\begin{proof}
Given $r>0$ there exists a unique $n=n(r)$ such that $K_1|DT^n(x)|^{-1}<r\le K_1|DT^{n-1}(x)|^{-1}$.  So $r>|I_n(x)|$ by \eqref{eq:cyl dis} and $I_n(x) \subset B(x, |I_n(x)|)\subset B(x,r)$.
Then
$$\frac{\log\mu(B(x, r))}{\log r}\le \frac{\log(\mu(I_n(x)))}{\log(K_1|DT^{n-1}(x)|^{-1})}\le \frac{\log\mu(I_{n-1}(x))}{\log(K_1^2|I_{n-1}(x)|)} \frac{\log\mu(I_n(x))}{\log\mu(I_{n-1}(x))}.$$
By the Gibbs property \eqref{eq:Gibbs},
$$ \frac{\log\mu(I_n(x))}{\log\mu(I_{n-1}(x))}\asymp  \frac{-nP(\Phi)+\sum_{j=0}^{n-1}\Phi(T^j(x))}
{-(n-1)P(\Phi)+\sum_{j=0}^{n-2}\Phi(T^j(x))}.$$
If $\overline\Phi(x)$ exists, then $\lim_{n\to\infty}  \frac{\log\mu(I_n(x))}{\log\mu(I_{n-1}(x))}=1$, so taking limits in $r$ we prove the lemma.
\end{proof}

In order to prove a reverse inequality to that in the previous lemma, we change the definition of an EMR.  Observing that this new condition can be satisfied by the inducing schemes used in, for example, \cite{IomTod11}.

We extend the system as follows.  In the interval $I$ (which is the ambient space containing $I_0$), let $I_0^\ell,I_0^r$ be intervals adjacent to $I_0$ and set $\hat I_0:=I_0^\ell\cup I_0\cup I_0^r$.  We will assume that for any $n\in \N$ and $i\in \N$, the map $T^n:I_{n,i}\to I_0$ has an extension $\hat T^n$ so that for an interval $\hat I_0\supset \hat I_{n,i}\supset I_{n,i}$, the map $\hat T^n|_{\hat I_{n,i}}:\hat I_{n,i}\to \hat I_0$ is a diffeomorphism.  By `extension', we mean that $\hat T^n|_{I_{n,i}}= T^n|_{I_{n,i}}$.  

We further assume that for there exists $K_2\ge1$ such that
\begin{equation}
\frac1{K_2}\le \frac{|I_n^\ell(x)|}{|I_n(x)|}, \frac{|I_n^r(x)|}{|I_n(x)|}\le K_2.\label{eq:hat cyl dis}
\end{equation}
  Also assume that the Gibbs property extends to $\hat T$,  that is \eqref{eq:Gibbs} holds as well as the following: there exists $K_3\ge 1$ such that
$$\frac1{K_3}\le \frac{\mu(\hat I_n(x))}{\mu(I_n(x))} \le K_3.$$
Note that in the setting of \cite{IomTod11}, $\frac{\mu(\hat I_n(x))}{\mu(I_n(x))}$ is simply $\frac{m_\phi(\hat I_0)}{m_\phi(I_0)}$ multiplied by some distortion constant (here $m_\phi$ is $\phi$-conformal measure on $I$).

\begin{lema}
Suppose that for $x\in I_0''$, $\delta_\mu(x)$ and $\overline\Phi(x)$ exist.
Then $\underline d_\mu(x)\ge \delta_\mu(x)$.
\label{lem:PW2}
\end{lema}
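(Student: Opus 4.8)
The plan is to run the argument of Lemma~\ref{lem:PW1} in reverse, using the two-sided collar $\hat I_0 = I_0^\ell\cup I_0\cup I_0^r$ to guarantee that a sufficiently small ball $B(x,r)$ is \emph{contained} in some cylinder. First I would fix $r>0$ and choose $n=n(r)$ to be the largest integer such that $B(x,r)\subset \hat I_n(x)$; by the extended distortion bound \eqref{eq:cyl dis} and the collar estimate \eqref{eq:hat cyl dis}, the length $|\hat I_n(x)|$ is comparable (up to $K_1 K_2$) to $|DT^n(x)|^{-1}$, while $|\hat I_{n+1}(x)|$ is comparable to $|DT^{n+1}(x)|^{-1}$. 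Since $B(x,r)$ is contained in $\hat I_n(x)$ but, by maximality of $n$, is \emph{not} contained in $\hat I_{n+1}(x)$, the radius $r$ is bounded below by (a constant times) the distance from $x$ to the boundary of $\hat I_{n+1}(x)$; here the two-sided collar is essential, since it is exactly what forces this distance to be comparable to $|\hat I_{n+1}(x)|$ rather than merely positive. Thus $r\ge \mathrm{const}\cdot |I_{n+1}(x)|$, and simultaneously $B(x,r)\subset \hat I_n(x)$ gives $\mu(B(x,r))\le \mu(\hat I_n(x))\le K_3\,\mu(I_n(x))$.

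Next I would estimate the quotient:
\[
\frac{\log \mu(B(x,r))}{\log r}\ge \frac{\log\big(K_3\mu(I_n(x))\big)}{\log\big(\mathrm{const}\cdot|I_{n+1}(x)|\big)}
= \frac{\log\mu(I_{n+1}(x))}{\log(K_1 |I_{n+1}(x)|)}\cdot\frac{\log(K_3\mu(I_n(x)))}{\log\mu(I_{n+1}(x))},
\]
exactly mirroring the display in the proof of Lemma~\ref{lem:PW1}. The first factor converges along the subsequence $n(r)$ to $\delta_\mu(x)$ by definition of the Markov dimension (using \eqref{eq:cyl dis} to replace $|I_{n+1}(x)|$ by $|DT^{n+1}(x)|^{-1}$, hence by a quantity whose log grows like $-\sum_{j=0}^{n}\Phi(T^j x)$ up to the pressure term, via the Gibbs property). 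The second factor tends to $1$: by the Gibbs property \eqref{eq:Gibbs},
\[
\frac{\log\mu(I_n(x))}{\log\mu(I_{n+1}(x))}\asymp \frac{-nP(\Phi)+\sum_{j=0}^{n-1}\Phi(T^j x)}{-(n+1)P(\Phi)+\sum_{j=0}^{n}\Phi(T^j x)},
\]
and since $\overline\Phi(x)$ exists this ratio tends to $1$; the additive constants $\log K_3$, $\log K_1$ are absorbed in the limit because the denominators go to $-\infty$. Taking $\liminf$ over $r\to 0$ then yields $\underline d_\mu(x)\ge \delta_\mu(x)$.

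The main obstacle is the geometric step: turning ``$B(x,r)\not\subset\hat I_{n+1}(x)$'' into a genuine \emph{lower} bound $r\gtrsim |I_{n+1}(x)|$. Without a collar on both sides of $I_0$, the point $x$ could sit arbitrarily close to an endpoint of the cylinder $I_{n+1}(x)$, so $r$ could be far smaller than $|I_{n+1}(x)|$ and the inequality would fail — this is precisely the error in \cite[Proposition 3]{PolWe} being corrected. With the extended maps $\hat T^n:\hat I_{n,i}\to\hat I_0$ and the comparability \eqref{eq:hat cyl dis}, the preimage $\hat I_{n+1}(x)$ strictly contains $I_{n+1}(x)$ with a definite buffer on each side whose length is comparable to $|I_{n+1}(x)|$; hence any $x\in I_0''\subset I_0$ is automatically a definite proportion of $|\hat I_{n+1}(x)|$ away from $\partial\hat I_{n+1}(x)$, which gives the required bound on $r$. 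The remaining points — checking that $\delta_\mu(x)$ and $\overline\Phi(x)$ existing makes all limits legitimate, and that the distortion/Gibbs constants wash out — are routine given the hypotheses already assumed on the extended system.
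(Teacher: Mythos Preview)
Your proposal is correct and follows essentially the same route as the paper. The only cosmetic difference is in how $n(r)$ is selected: the paper fixes $n$ directly by the derivative condition $\frac{1}{K_1K_2}|DT^n(x)|^{-1}<r\le \frac{1}{K_1K_2}|DT^{n-1}(x)|^{-1}$ and then reads off both $B(x,r)\subset\hat I_{n-1}(x)$ and the lower bound on $r$ from \eqref{eq:cyl dis}--\eqref{eq:hat cyl dis}, whereas you fix $n$ by maximal containment and recover the lower bound on $r$ from the collar geometry; after that, the factorisation and the Gibbs/Birkhoff limit argument are identical up to an index shift.
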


Therefore, combining Lemmas~\ref{lem:PW1} and \ref{lem:PW2}, if $\delta_\mu(x)$ and $\overline\Phi(x)$ exist,
then $\overline d_\mu(x)= \delta_\mu(x)$.

\begin{proof}
Given $r>0$ there exists a unique $n=n(r)$ such that $\frac{1}{K_1K_2}|DT^n(x)|^{-1}<r\le \frac{1}{K_1K_2}|DT^{n-1}(x)|^{-1}$.  So $r\le \frac{|I_{n-1}(x)|}{K_2}\le |I_{n-1}^\ell(x)|, |I_{n-1}^r(x)|$ by \eqref{eq:cyl dis} and \eqref{eq:hat cyl dis}, which implies that $B(x, r)\subset \hat I_{n-1}(x)$.
Then
$$\frac{\log\mu(B(x, r))}{\log r}\ge \frac{\log(\mu(\hat I_{n-1}(x)))}{\log\left(\frac{1}{K_1K_2}|DT^{n}(x)|^{-1}\right)}\ge \frac{\log\left(K_3\mu(I_n(x))\right)}{\log\left(\frac{1}{K_1^2K_2}|I_n(x)|\right)} \frac{\log\left(K_3\mu(I_{n-1}(x))\right)}{\log\left(K_3\mu(I_n(x))\right)}.$$
As in Lemma~\ref{lem:PW1}, taking limits in $r$ proves the lemma.
\end{proof}

The rest of \cite[Proposition 3]{PolWe} follows similarly.

\end{document}